\newcommand{\ind}{{\bf 1}}
\theoremstyle{plain}
\newtheorem{theorem}{Theorem}[section]
\newtheorem{proposition}[theorem]{Proposition}
\newtheorem{lemma}[theorem]{Lemma}
\newtheorem{corollary}[theorem]{Corollary}
\newtheorem{definition}[theorem]{Definition}
\newtheorem*{assumption}{Assumption A}
\theoremstyle{definition}
\theoremstyle{remark}
\newtheorem{remark}[theorem]{Remark}
\newcommand{\N}{\ensuremath{\mathbb{N}}}
\newcommand{\R}{\ensuremath{\mathbb{R}}}
\renewcommand{\P}{\ensuremath{\mathbb{P}}}
\newcommand{\E}{\ensuremath{\mathbb{E}}}
\newcommand{\var}{\ensuremath{{\rm{var}}}}
\newcommand{\be}{\begin{equation}}
\newcommand{\ee}{\end{equation}}
\begin{document}

\begin{frontmatter}

\title{An individual-based model for the Lenski experiment,\\ and the deceleration of the relative fitness}

\author[tuaddress]{Adri\'an Gonz\'alez Casanova}
\ead{adriangcs@hotmail.com}

\author[tuaddress]{Noemi Kurt}
\ead{kurt@math.tu-berlin.de}

\author[fraddress]{Anton Wakolbinger}
\ead{wakolbin@math.uni-frankfurt.de}

\author[upaddress]{Linglong Yuan}
\ead{yuanlinglongcn@gmail.com}

\address[tuaddress]{TU Berlin, Institut f\"ur Mathematik, Stra\ss e des 17. Juni 136, 10623 Berlin, Germany}

\address[fraddress]{Mathematical Institute, Goethe-University Frankfurt, Box 111932, 60054 Frankfurt am Main, Germany}

\address[upaddress]{Department of Mathematics, Uppsala University, SE-75106 Uppsala, Sweden}

%
%
%
%
\date{\today}

\begin{abstract}
The Lenski experiment investigates the long-term evolution of bacterial populations. In this paper we present an individual-based probabilistic model that captures essential features of the experimental design, and whose mechanism does not include epistasis in the continuous-time (intraday) part of the model, but leads to an epistatic effect in the discrete-time (interday) part. We prove that under some assumptions excluding clonal interference, the rescaled relative fitness process converges in the large population limit to a power law function, similar to the one obtained by Wiser et al. (2013), there attributed to effects of clonal interference and epistasis. 
\end{abstract}

\begin{keyword}
Experimental evolution, Lenski experiment, relative fitness, Yule processes, Cannings dynamics, branching process approximation
\MSC[2010] 92D15, 60J80, 60J85
\end{keyword}

\end{frontmatter}

\section{Introduction}
The \textit{Lenski experiment} (see \cite{Lenski1, LT, lenski} for a detailed description) is a cornerstone in experimental evolution. It investigates the long-term evolution of 12 initially identical populations of the bacterium E. coli in identical environments. 
One of the basic concepts of the Lenski experiment is that of the {\it daily cycles}. Every day starts by sampling approximately $5\cdot  10^6$ cells from the bacteria available in the medium that was used the day before. This sample is 
propagated in a minimal glucose medium. The bacteria then reproduce (by binary splitting) with an exponential population growth. The reproduction continues until the medium is deployed,
i.e., when there is no more glucose available. Then the reproduction stops and a phase of starvation starts. This phase lasts until the beginning of the next day, when the new sample is transferred
to fresh medium. Around $5\cdot  10^8$ cells are present at the end of each day.

Up to now the experiment has been going on for more than {60'000 generations (or 9000} days, see \cite{lenski}). One important feature is that samples of  ancestral populations
were stored, which afterwards could be made to reproduce under competition with later generations in order to experimentally determine the fitness of an evolved strain relative to the founder ancestor of the population by comparing their growth rates in the following manner \cite{Lenski1}: A population of size $A_0$ of the unevolved strain and a population of size $B_0$ of the evolved strain perform a direct competition in the minimal glucose medium. The respective population sizes at the end of the day, that is, after the glucose is consumed, are denoted by $A_1$ and $B_1.$ The (empirical) \emph{relative fitness} $F(B|A)$ {of strain $B$ with respect to strain $A$} is then given by the ratio of the exponential growth rates, calculated as 

\begin{eqnarray}\label{eq:empirical_fitness}
F(B|A)&=&\frac{\log(B_1/B_0)}{ \log(A_1/A_0)}.
\end{eqnarray}

Considerable changes of the relative fitness have been observed in the more than {25} years of the experiment (\cite{LT, Barrick, WRL}). As expected, the relative fitness of the population increases over time, but one of the features that have been observed is a pronounced deceleration in the increase of the relative fitness, see Figure 2 in \cite{WRL}. In particular it has been observed that it increases sublinearly over time.  Several questions have arisen in this context (\cite{Barrick, WRL}):  How can the change of relative fitness be explained or approximated? Which factors account for the deceleration in the increase of the relative fitness? 

In \cite{Barrick}, the authors perform an analysis on the change of the relative fitness for the first 20'000 generations of the experiment, and of the mutations that go to fixation
during the same period. They conjecture that effects of dependence between mutations, like \emph{clonal interference} and \emph{epistasis}, contribute crucially to the deceleration of the gain of relative fitness. 

 In \cite{WRL}, the authors analyse the change of the relative fitness for the first 50'000 generations of the experiment, and fit {the observations} to a power law function. They also conjecture that clonal interference and epistasis contribute crucially to the quantitative behavior of relative fitness, and support this conjecture by sketching a mathematical model which predicts a power law function for the relative fitness.
 
In this paper, we propose a basic mathematical model for a population that captures  essential features of the Lenski experiment, in particular the daily cycles. It models an asexually reproducing population whose growth in each cycle is stopped after a certain time, and a new cycle is started with a sample of the original population. We include (beneficial) mutations into the model by assuming that an individual may mutate with a certain (small) probability and draws a certain (small) reproductory benefit from the mutation, which results in an increase of the reproduction rate during the cycle. We then calculate the probability of fixation of a beneficial mutation, and its time to fixation. Using this, we can prove that under some conditions on the parameters of mutation and selection, with  high probability there will be no clonal interference in the population, which means in our situation that, with high probability, beneficial mutations only arrive when the population is homogeneous (in the sense that 
all its individuals have the same {reproduction rate}). This result implies that we are essentially dealing with a model of adaptive evolution, which allows a thorough mathematical analysis. In particular, using convergence results for Markov chains in the spirit of \cite{kurtz}, we are able to prove that the relative fitness of the population, on a suitable timescale in terms of the population size, converges locally uniformly to a deterministic curve (see Figure \ref{fig:parabola}). 

In this way we arrive at an explanation of a power law behavior (with a deceleration in the increase) of the relative fitness. This explanation is in terms of the experiment's design (which makes the {\em generation time} dependent of the fitness level), and does not invoke clonal interference, nor a direct epistatic effect of the beneficial mutations (see Sec.~\ref{sect:epistasis}).

More specifically, in our model every beneficial mutation which is succesful in the sense that it goes to fixation, will increases the individual reproduction rate by the same amount  ($\rho$, say), irrespective of the current value $r$ of the individual reproduction rate. In this sense the model is ``non-epistatic''.
However, there will be an indirect epistatic effect caused by the design of the experiment:  since the amount of glucose, which the bacteria get for their population growth, remains the same from day to day, a population with a high individual reproduction rate will consume this amount more quickly than a population with a low individual reproduction rate. In other words, the daily duration of the experiment (that is the time $t = t_i$ during which the population grows at day $i$) will depend on the current level $r=r_i$ of the individual reproduction rate, and will  become shorter as $r$ increases.
Indeed, the ratio of the two expected growth factors in one day is
$\exp((r+\rho) t) / \exp (rt)  = \exp (\rho t)$.
Even though $\rho$ does not depend on $r$ by our assumption, this ratio does depend on $r$, because, as stated above, the duration $t = t_i$ of the daily cycles becomes smaller as $r$ increases. We are well aware that clonal interference as well as direct epistatic effects will also be at work in the Lenski experiment, and should be modelled. On the other hand, our results might help to separate these effects from an indirect epistatic effect caused by a shortening of the daily cycles as the generations proceed, which would go along with a quicker consumption of the daily nutrition as fitness increases.
%
 
In the remainder of this introductory section we discuss our mathematical approach and main results, and put our methods into the context of related work. The formal statement of the model and the main results will be given in Section \ref{sec:modelandresults}, and the proofs in Section \ref{proofsec}. The most intricate proof is that of Theorem \ref{thm:fixation} which relies on a coupling of the daily sampling scheme with near-critical Galton-Watson processes that is successful over a sufficiently long time period. Some tools from the theory of branching processes (Yule and Galton-Watson processes) are presented in the Appendix.
\subsection{A neutral model for the daily cycles}\label{neutralmodel}
We build our model on few basic assumptions: Every individual reproduces independently by binary splitting at a given rate until the end of a growth cycle, which corresponds to one {\em day} (in the sense of \cite{lenski}). 
Our daily cycle model is determined by specifying the reproduction rate of each individual, and a stopping rule to end the growth of the population. To illustrate this we assume for the moment a \emph{neutral situation}, i.e. all individuals have the same reproduction rate. The experiment is laid out such that the total number of bacteria at the end of one day is roughly the same for every day. This suggests the following mathematical assumptions: Each day starts with a population of $N$ individuals. These individuals reproduce by binary splitting at some fixed rate $r$ until the maximum capacity is reached. We assume that this happens (and that the ``Lenski day'' is over)  as soon as the total number of cells in the medium is close to $\gamma N$ for some constant $\gamma>1$ (a precise definition and a discussion of the corresponding stopping rules will be given in Section \ref{sect:daily}). The description of the experiment suggests to think of $N=5\cdot 10^6$, and $\gamma \approx 100$, since at the end of 
each day, one gets around $5\cdot 10^8$ bacteria, see supplementary material of \cite{Lenski1}. The subsequent day is started  by sampling $N$ individuals from the approximately $\gamma N$ total amount available, and the procedure is repeated. 

This setting induces a genealogical process, which we study on the evolutionary time scale, that is with one unit of time corresponding to $N=5\cdot 10^6$ days. On this time scale, the genealogical process turns out to be approximately a constant time change of the Kingman coalescent, where the constant is $c_\gamma: = 2(1-\frac{1}{\gamma})$. In this sense, $N/c_\gamma $ plays the role of an \emph{effective population size}. With the stated numbers, this is much larger than the number ($\approx 9000$) of ``Lenski days''   that have passed so far. In other words, in the neutral model so far only a small fraction of one unit of the evolutionary timescale has passed. Still, this model provides a good basis to introduce mutation and selection. In fact, we will see that the design of the experiment (via the stopping rule that defines the end of each day) affects the selective advantage provided by a beneficial mutation and in this way has an influence that 
goes well beyond the determination of the effective population size in the neutral model.  

Our genealogical model arises naturally from the daily cycle setting, see Figure \ref{fig:Lenski1}. Schweinsberg \cite{Schweinsberg2003107} obtained a Cannings dynamics by sampling generation-wise $N$ individuals from a supercritical Galton-Watson forest, and analysed the arising coalescents as $N \to \infty$. Our model is similar in spirit, with the binary splitting leading to Yule processes. We will introduce the additional feature that some individuals reproduce at a faster rate;  in this sense Schweinsberg's sampling approach to neutral coalescents is naturally extended to a case with selection. 

\begin{figure}[ht]
    \centering
     \includegraphics[height=0.5\textwidth]{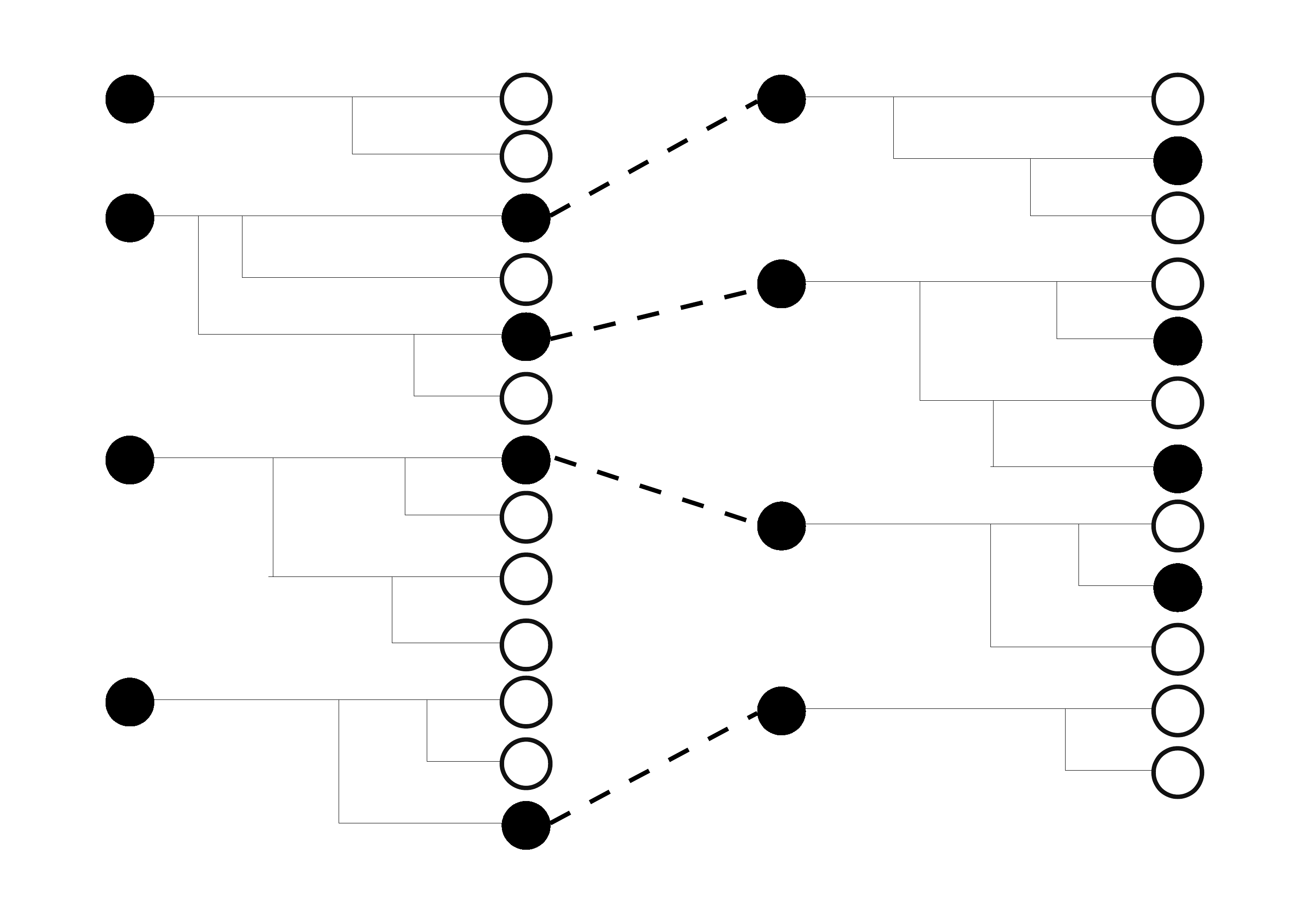}  
      \caption{\small Two of the daily cycles (or ``days''), with $N=4$ and $\gamma=3$. The $N$-sample at the end of day 1 constitutes the parental population at the beginning of day 2. }
      \label{fig:Lenski1}
\end{figure}

\subsection{Mutants versus standing population}
Next we consider a modification of the previous model, supposing that at a certain day a fraction of the population reproduces at rate $r$, while  the complementary fraction (founded by some beneficial mutant in the past) reproduces faster, say at rate $r+\varrho_N$, with $\varrho_N>0.$ Our assumptions will be that the {increment of the reproduction rate}  $\varrho_N$ is small, but not too small, more precisely we will assume that $\varrho_N\sim N^{-b}$ for some $0<b<1/2$ ($\sim$ denoting asymptotic equivalence, i.e. the convergence of the ratio to 1 as $N\to\infty).$ We assume that the reproduction rate is heritable. Based on the observation that with the stopping procedure indicated above a ``Lenski day'' lasts approximately $\frac{\log \gamma}{r}$ units of time of the Yule process, we will prove in Proposition \ref{prop:selective_advantage} that the expected number of offspring at the beginning of the next day of an individual with reproduction rate $r+\varrho_N$ is increased for large $N$ by 
approximately $\varrho_N\frac{\log \gamma}{r}$ compared to an individual with {reproduction rate} $r$. In this sense  the \emph{effective selective advantage} of a beneficial mutation is approximately $\varrho_N \frac{\log \gamma}{r}$.

Let us emphasize that here one obtains a dependence on the reproduction rate $r$ of the standing population due to the relation between $r$ and the ``length of a day'', 
i.e. the time span it takes the total population to reach the maximum capacity.  The implication of this result is that the selective advantage provided by reproducing $\varrho_N$ units faster
is comparatively large if the standing population is not well adapted and thus reproduces at a low rate, and is comparatively small if the population is well adapted in the sense that it already reproduces fast.

\subsection{Genetic and adaptive evolution}
In order to study the genetic and adaptive evolution of a population under the conditions of the Lenski experiment, we consider a model with {\it moderately strong selection -- weak mutation} and constant additive fitness effect of the mutations. We assume that the population reproduces in daily cycles as described above, and that at each day with probability $\mu_N$ a beneficial mutation occurs within the ancestral population of that day, where $\mu_N \to 0$ as $N\to \infty$. Following the ansatz described above, we assume that an individual affected by such a beneficial mutation increases its reproduction rate and that of its offspring by $\varrho_N$. Some of these mutations will go to fixation (in which case they will be called ``successful''), while the others are lost from the population. Calculating the probability of fixation of a beneficial mutation is a classical problem, studied already at the beginning of the last century by Haldane in the Wright-Fisher model. These questions still have a major 
interest in modern times, and have recently been studied in different contexts (see for example \cite{Lambert:2006} or \cite{Parsons}).

Assume now that the initial population on day $i$ consists of $N-1$ individuals that reproduce at rate $r$ and one mutant that reproduces at rate $r+\varrho_N$. We will see in Theorem \ref{thm:fixation} that the probability of fixation of such a mutant is asymptotically
\begin{equation}\label{profix}
\frac{\rho_N \log \gamma}{r} \frac {\gamma}{\gamma-1}
\end{equation}
as $N\to \infty$.
A crucial role in the proof of our result is played by an intricate approximation of the number of the mutants' descendants by near-critical Galton-Watson process, as long as their number is relatively small compared to the total population. 

In Proposition \ref{noclonalint}, we prove that in a certain regime of the model parameters, namely if $\varrho_N\sim N^{-b}, \mu_N\sim N^{-a},$ with $b\in (0,1/2)$ and $a>3b,$ the time it 
takes for a mutation to go to fixation or extinction is with high probability shorter than the time between two mutation events which is of order $\mu_N^{-1}$. 
This result allows us to exclude clonal interference on the time scale $ \lfloor t \varrho_N^{-2} \mu_N^{-1} \rfloor$, and to approximate the reproduction rate process of our original model 
by a simple Markov chain which can be interpreted as an idealized process where successful mutations fixate immediately on the scale of their arrival rate, and unsuccessful ones are neglected.

In this respect, the analysis presented in this paper can be seen in the framework of the theory of stochastic adaptive dynamics, as studied by Champagnat, M\'el\'eard and others, see \cite{Champagnat06, De13chemostatmodel} and references therein.  Let us emphasize, however, that we prove the validity of our approximation by taking {\em simultaneous limits} of the population size $N\to\infty,$ the rate of mutation $\mu_N\to 0,$ and the {increment of the reproduction rate} $\varrho_N\to 0,$ which requires some care, and is carried out by taking the specifics of our model into account.

{\subsection{Deterministic approximation on longer time scales}

The calculation of the fixation probability in Theorem \ref{thm:fixation} and the exclusion of clonal interference in Proposition \ref{noclonalint}, as well as the resulting Markov chain approximation of the reproduction rate process are the key steps in the analysis of the long-term behaviour of the population in the Lenski experiment.  This allows to derive the process counting the number of eventually successful beneficial mutations until a certain day, and the process of the relative fitness of the evolved population compared to the initial fitness.

It turns out, as we prove in Theorem \ref{thm:mutations} that for large $N$, on the time scale $ \lfloor t \varrho_N^{-1} \mu_N^{-1} \rfloor$, the number of successful mutations is approximately a Poisson process with constant rate $\frac{\gamma \log{\gamma}}{(\gamma-1)r_0},$ if the observation of the population starts on some day where the reproduction rate is constant and equal to $r_0>0.$

In order to define the fitness of an evolved strain relative to the unevolved one, we assume that the unevolved population, taken from the first day of the experiment, is homogeneous and evolves at rate~$r_0.$ 
In view of \eqref{eq:empirical_fitness} we define the fitness of the population at the beginning of day $i$ with respect to that at the beginning of day $0$ as
\begin{equation}\label{eq2}
F_i^{(N)} := \frac
{\log{\frac 1N}\sum_{j=1}^N e^{R_{i,j}u}}{\log e^{r_0u}} 
\end{equation}
where $R_{i,j}, j=1,\ldots, N$ are the reproduction rates of the individuals present at the beginning of day $i$, and $u$ is a given time for which the two populations are allowed to grow together. (This time may also depend on $i$, which does not affect our results.) For brevity we call $F_i$ the {\em relative fitness at day~$i$}.

We prove in Theorem \ref{thm:adapt} that, under the assumptions described above and specified in Sec. 2, the sequence of time-rescaled processes $(F_{\lfloor t \varrho_N^{-2} \mu_N^{-1}\rfloor})_{t\geq 0} $ converges locally uniformly as $N\to\infty$ to the parabola
\begin{equation}\label{parabola}
f(t)=\sqrt{1+\frac{2\gamma\log{(\gamma)}}{(\gamma-1)r^2_0}t}, \quad t\geq 0.
\end{equation}

Hence our model, which should be regarded as idealized and basic, still succeeds to describe the observed sublinear increase of relative fitness quite well on a qualitative level, even without incorporating the effects of clonal interference or epistasis.  

\begin{figure}[ht]
    \centering
     \includegraphics[scale=1]{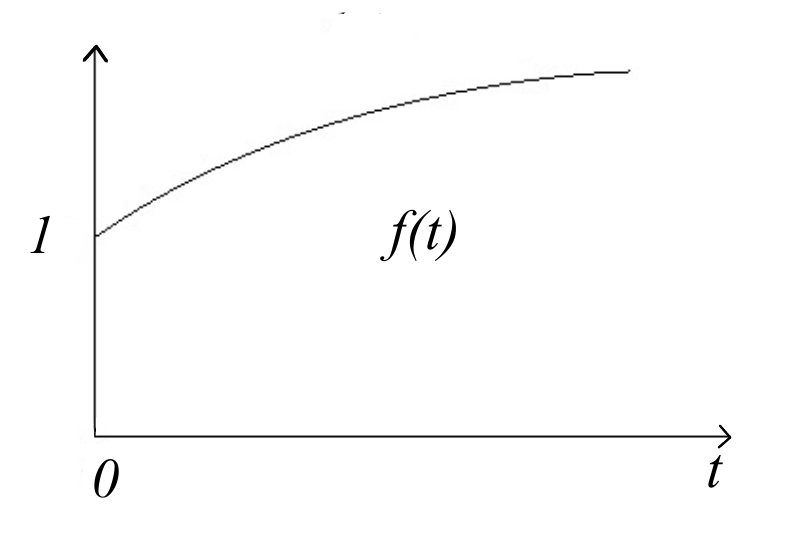} 
     \vspace{-0.5cm} 
      \caption{\small The limiting relative fitness curve for $N\to\infty,$ if time is rescaled by $\lfloor t \varrho_N^{-2} \mu_N^{-1}\rfloor, t\geq 0$. The curve $f(t)$ is given by \eqref{parabola}.}
      \label{fig:parabola}
\end{figure}

\subsection{Diminishing returns and epistasis.}\label{sect:epistasis}
In this subsection we summarize the heuristics which leads to the formula~\eqref{profix} for the fixation probability in our individual-based model, and compare it with the ansatz of Wiser et al. \cite{WRL}.

Our basic assumption is that every beneficial mutation adds a fixed amount $\varrho_N$ to the reproduction rate $r$ of the individual that undergoes the mutation. When all (or nearly all) individuals that are present  at day $i$ have reproduction rate $r$, then this day ends (approximately) at time $\sigma := \frac{\log \gamma}{r}$, because then $e^{r\sigma} =  \gamma$. Consequently, over this day the growth factor of a mutant population  whose reproduction rate is $r+\varrho_N$ is $e^{(r+\varrho_N)\sigma}$, and the ratio of these two growth factors is $e^{\varrho_N \sigma}\approx 1+\frac{\varrho_N \log \gamma}{r}$, revealing that the {\em selective advantage} of the mutant is  $s_N := \frac{\varrho_N \log \gamma}{r}$. In the branching process approximation for the onset of the mutant, $1+s_N$ is the offspring mean, while the quantity $c_\gamma  = 2(1-\frac{1}{\gamma})$ that appeared already in Sec.\ref{neutralmodel} converges for $N\to \infty$ to the offspring variance, see the discussion after Theorem \ref{thm:kingman}. In view of Lemma \ref{lem:GW}, this explains the form~\eqref{profix}  of the fixation probability.

A related observation appears in \cite{Chevin}: if two populations grow (as a pure birth process) with Malthusian parameters $r_{\rm w}$ and $r_{\rm m}$, and if one generation corresponds to a doubling of the population size, then a ``correct measure for the dynamics of selection per generation'' is $(r_{\rm m}-r_{\rm w})T$, where $T=(\log 2)/ r_{\rm w}$ is the {\em generation time} (see \cite{Chevin}, formula (3.2)). Our model reflects such a generation scheme, with $\log \gamma$ instead of $\log 2$, due to the design of the Lenski experiment.

It is interesting to note that our model leads to quite similar conclusions as the one proposed in  \cite{WRL}, although the basic hypotheses are somewhat different. Motivated by \cite{Gerrish} the authors of  \cite{WRL} {\em assume} that the $(n+1)$-st successful mutation increases the individual reproduction rate by a factor $1+\hat S_{n+1}$, where $S_{n+1}$ is distributed exponentially with some parameter $\alpha_{n}$, and the distribution of $\hat S_{n+1}$ is that of  $S_{n+1}$ conditioned to the event that the mutation goes to fixation (surviving also clonal interference). They make the following assumption in order to model \emph{diminishing returns}: The sequence $\alpha_{n}, n\in\N_0$, satisfies
 \begin{equation}\label{eq:epistasis}
  \alpha_{n+1}=\alpha_n(1+g\langle S_{n+1}\rangle ),
 \end{equation}
where $g$ is a positive constant and $\langle S_{n+1}\rangle$ is the expected value of $\hat S_{n+1}$. According to \cite{WRL},  the parameter $g$ serves to model the phenomenon of {\em epistasis}, which corresponds to a non-linearity in the fitness effects. 
Through \eqref{eq:epistasis}, it is a priori assumed that the expected value of the beneficial effect of a mutation decreases as the number of successful mutations increases. Arguing heuristically  by a branching process approximation,
the authors of \cite{WRL} obtain an approximation of the relative fitness by the function 
\begin{equation}\label{wiser}
\overline{w}=(ct+1)^{1/2g}.
\end{equation}
Here  $c$ depends on clonal interference and epistasis. In \cite{WRL}  the approximation is compared to real data, taking different pairs $(g,c)$ and proving that the power law approximation in equation \eqref{wiser} fits better to data than the hyperbolic curve proposed in \cite{Barrick}.

Our Theorem \ref{thm:adapt} is consistent with \eqref{wiser}, as we prove that, under the assumptions of our model,
\begin{equation}
\overline{w}=(c't+1)^{1/2}.
\end{equation}
Notably, the ``diminishing returns'' for the case $g=1$ emerge in our model under the assumption that every  beneficial mutation adds a constant amount $\varrho_N$  to the  intraday individual reproduction rate, which corresponds to the absence of epistasis in this part of the model. This shows that the observed power law behaviour of the relative fitness can to some extent be explained by the mere design of the experiment, based on a simple non-epistatic intraday model -- a fact which may also be seen as a strengthening of  the argument of Wiser et al \cite{WRL} that a power law is an appropriate approximation to the evolution of relative fitness. 

 In order to arrive at a power law \eqref{wiser} for more general $g$, we have to extend our model slightly.
Indeed, in Corollary \ref{Epistasis} we prove that a gain in the reproduction rate of $x^{-q}{\varrho_N},$ for some $q>-1$, if the present relative fitness is $x$, leads to a power law fitness curve with exponent $1/(2(1+q)),$ which compares to \eqref{wiser} by taking $q=g-1$. 

For a recent study that proposes a general framework for quantifying patterns of macroscopic epistasis from observed differences in adaptability, including a discussion of fitness and mutation trajectories in the  Lenski experiment, see \cite{GoodDesai}.~We refer also to the discussion in \cite{CouceTenaillon} of various epistatic models that would explain a declining adaptability in microbial evolution experiments,  and to the discussion in \cite{epistasisadaptation} concerning the evolutionary dynamics on epistatic versus non-epistatic fitness landscapes with finitely many genotypes.

\section{Models and main results}\label{sec:modelandresults}

\subsection{Mathematical model of daily population cycles}\label{sect:daily}
In this section, we  construct a mathematical model for the daily reproduction and growth cycle of a bacterial population in the Lenski experiment, and state some first results,
in particular on fixation probabilities of beneficial mutations. These are the foundations for our main results to be presented in Section \ref{sect:adaptation}.

\subsubsection{Neutral model}\label{sect:neutral}
We start by introducing the neutral model, where all individuals in the population reproduce at the same rate. The model consists of a continuous time intraday dynamics, and a discrete time interday dynamics, the latter is governed by a stopping- and a sampling rule.
We number the daily cycles, or ``days'' as we call them for simplicity, by $i\in\N_0$. Fix $N\in\N,$ and  $r>0.$ We assume that every daily cycle starts with exactly $N$ individuals that reproduce at rate $r,$ the \emph{basic reproduction rate}. More precisely, we decree that, 
independently for every day $i\in\N_0,$ the (neutral) intraday \emph{population size process} has the distribution of a \emph{Yule process}, denoted by $(Z^{(N)}_t)_{ t\geq 0},$ with reproduction parameter $r,$ started with $Z^{(N)}_0=N$ individuals. Consequently, for every $t> 0,$ the random variable $Z_t^{(N)}$ follows a negative binomial distribution with parameters $N$ and $e^{-rt}$ (see Corollary A.4 in Appendix A). In Appendix \ref{app:yule}, we collect the properties of Yule process that are relevant for this paper.

Fix now $\gamma>1,$ and define stopping times

\be \label{tau} \varsigma_N:=\inf\{t>0: Z_t^{(N)}\geq \gamma N\}\ee
and

\be  \sigma^{(N)}:=\inf\{t>0: \E[Z_t^{(N)}]\geq \gamma N\}.\ee

Note that $\varsigma_N$ is a random variable, while $\sigma^{(N)}$ is deterministic. In fact, since $\E[Z_t^{(N)}]= Ne^{rt},$ we see immediately that $\sigma^{(N)}$ does not depend on $N$ and equals
\be \sigma=\frac{\log \gamma}{r}. \ee

\begin{definition}[Neutral model] \label{def:neutral} 
Fix $N\in\N,\, r>0,\,\gamma>1.$ In the neutral model, independently for every $i\in\N_0,$ the population size at the end of day $i$ is given by a copy of the random variable $Z_{\sigma}^{(N)},$ where $(Z_t^{(N)})_{t\geq 0}$ is defined above.
\end{definition}

In other words, at every day the neutral population is started with $N$ individuals that reproduce by binary splitting at rate $r$ (which leads to the above Yule process), with  the population growth stopped at time $\sigma$ that depends on $\gamma$ and $r$.

\begin{remark}[Stopping rules]
The two stopping times $\varsigma_N$ and $\sigma$ give rise to two different stopping rules for the population: The \emph{stopping rule $1$} stops the population growth at time $\varsigma_N,$ that is the time when population size has reached exactly $\lceil \gamma N\rceil.$ On the other hand, \emph{stopping rule 2} uses $\sigma$ instead, which implies that the size of the stopped population, given by $Z_\sigma^{(N)}$, has a negative binomial distribution with parameters $N$ and $\frac{1}{\gamma}$.
While $\varsigma_N$ might be a more natural choice for the stopping time of the population growth, $\sigma$ is easier to deal with. In this paper we will work under stopping rule 2, but we  expect the essentials of our results to be true for $\varsigma_N$ as well. In fact, as we show in Lemma \ref{lem:conv_time}, $\varsigma_N$ converges to $\sigma$ in distribution.
\end{remark}

{\subsubsection{The genealogy}
Before turning our attention to the model with selection, we briefly discuss the neutral genealogy.}
If we label the individuals within this process, we can keep track of their ancestral relationship by specifying a sampling rule.
\begin{definition}[Sampling rule]\label{def:sampling}
The parent population of day $i+1$ is a uniform sample of size $N$ taken from the population at the end of day $i$.

\end{definition}
Let $\nu^i=(\nu_1^i,\cdots,\nu_N^i), \, i=0,1,2, \ldots$, be a sequence of vectors such that $\nu_j^i$ is the number of offspring in the population at the beginning of day $i$ of individual $j$ from the population at the beginning of day $i-1$. Since $(\nu^i)_{i\in\N_0} $ are independent and identically distributed, and for each $i$ the components of $\nu^i$ are exchangeable and sum to $N$, we are facing a Cannings model, where the ``days'' play the role of generations (see \cite{wakeley} for more background on Cannings models and coalescents).
We can now fix a generation $i$ and consider the genealogy of a sample of $n (\leq N)$ individuals. Here, for conceptual and notational convenience, we shift the ``present generation'' to the time origin and extend the Cannings dynamics (which is time-homogeneous) to all the preceding generations as well. 

\begin{definition}[Ancestral process]
Sample $n$ individuals at generation $0$ and denote them by $l_1,\cdots,l_n$. Let $\mathcal{P}_n$ be the set of partitions of $\{1,2,\cdots,n\}$ and $B^{(N,n)}=(B^{(N,n)}_g)_{g\in\mathbb{N}_0}$ be the process taking values in $\mathcal{P}_n$ such that any $j,k$ being in the same block in $B^{(N,n)}_g$ if and only if there is {a common} ancestor at generation $-g$ for individuals $l_j,l_k$. Then $B^{(N,n)}$ is the ancestral process of the chosen sample.
\end{definition}

It turns out that the genealogical process converges after a suitable time-scaling to the classical Kingman coalescent (see \cite{wakeley} for a definition and more details on the relevance of Kingman's coalescent in population genetics). The time-rescaling depends on the population size $N$ and is determined by a constant depending on $\gamma.$
\begin{theorem}[Convergence to Kingman's coalescent]\label{thm:kingman} For all $n \in \mathbb N$, the sequence of ancestral processes 
$\big(B^{(N,n)}_{\lfloor Nt/2(1-\frac{1}{\gamma}\big)\rfloor})_{t\geq 0}$ converges weakly on the space of c\`adl\`ag paths as $N\to\infty$ to Kingman's $n$-coalescent.
\end{theorem}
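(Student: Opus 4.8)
The plan is to recognize the daily sampling dynamics as a Cannings model, to compute the pair--coalescence probability $c_N$ and show $c_N\sim c_\gamma/N$, and then to verify the two classical moment conditions under which such a model, run on the time scale $\lfloor Nt/c_\gamma\rfloor$, has Kingman's $n$-coalescent as its ancestral limit (see e.g.\ \cite{wakeley} and the references therein). The only genuinely technical point will be that the total population size at the end of a day is random rather than deterministic, and I treat this throughout by the deterministic lower bound $M\ge N$ together with a law-of-large-numbers concentration $M/N\to\gamma$.

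First I would make the one-step offspring law explicit. Condition on a generation of $N$ individuals; each of them founds an independent Yule tree, which run for time $\sigma=\frac{\log\gamma}{r}$ has a size that is geometrically distributed on $\{1,2,\dots\}$ with success parameter $e^{-r\sigma}=\frac1\gamma$. Call these i.i.d.\ family sizes $Y_1,\dots,Y_N$, with $\E[Y_1]=\gamma$, $\E[Y_1(Y_1-1)]=2\gamma(\gamma-1)$ and all moments finite, and put $M:=Y_1+\dots+Y_N$, which is negative binomial with parameters $N$ and $\frac1\gamma$. The parent population of the next generation is a uniform sample of size $N$ without replacement from these $M$ individuals, so conditionally on $(Y_1,\dots,Y_N)$ the offspring vector $(\nu^i_1,\dots,\nu^i_N)$ is multivariate hypergeometric; since $(\nu^i)_{i}$ are i.i.d.\ with exchangeable components summing to $N$, this is a Cannings model. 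By exchangeability, the probability $c_N$ that two fixed individuals of a generation have the same parent is $c_N=\frac{1}{N-1}\E[\nu_1(\nu_1-1)]=N\,\E\!\big[\tfrac{Y_1(Y_1-1)}{M(M-1)}\big]$. Since $M\ge N$ one has $\tfrac{Y_1(Y_1-1)}{M(M-1)}\le\tfrac{Y_1^2}{N(N-1)}$ with $\E[Y_1^2]<\infty$, so $c_N\to0$; moreover $M/N\to\gamma$ in probability, and dominated convergence gives $N^2\,\E\!\big[\tfrac{Y_1(Y_1-1)}{M(M-1)}\big]\to\gamma^{-2}\E[Y_1(Y_1-1)]=\gamma^{-2}\cdot 2\gamma(\gamma-1)=2(1-\tfrac1\gamma)=c_\gamma$, i.e.\ $c_N\sim c_\gamma/N$. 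Consequently rescaling discrete time by $\lfloor t/c_N\rfloor$ or by $\lfloor Nt/c_\gamma\rfloor$ produces the same limit.

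Second I would rule out multiple and simultaneous mergers. Writing $(x)_k:=x(x-1)\cdots(x-k+1)$, the probability that three fixed individuals share a parent equals $\frac{\E[(\nu_1)_3]}{(N-1)(N-2)}$, and $\E[(\nu_1)_3]=N(N-1)(N-2)\,\E\!\big[\tfrac{(Y_1)_3}{(M)_3}\big]\le\E[Y_1^3]<\infty$ because $M\ge N$; similarly the probability that two disjoint fixed pairs each share a (distinct) parent equals $\frac{\E[\nu_1(\nu_1-1)\nu_2(\nu_2-1)]}{(N-2)(N-3)}$, whose numerator is $N(N-1)(N-2)(N-3)\,\E\!\big[\tfrac{Y_1(Y_1-1)Y_2(Y_2-1)}{(M)_4}\big]\le\E[Y_1^2]\,\E[Y_2^2]<\infty$ using independence of $Y_1,Y_2$. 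Both probabilities are therefore $O(N^{-2})=o(c_N)$. Together with the asymptotics $c_N\sim c_\gamma/N$, this verifies the hypotheses of the standard Cannings-to-Kingman theorem: after shifting the present generation to the origin and extending the (time-homogeneous) Cannings dynamics to the preceding generations, the transition probabilities of the ancestral chain $\big(B^{(N,n)}_{\lfloor Nt/c_\gamma\rfloor}\big)_{t\ge0}$ converge to those of the generator of Kingman's $n$-coalescent (each pair of blocks merging at rate $1$, no other transition), from which weak convergence on the space of c\`adl\`ag paths follows.

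The main obstacle is simply the randomness of $M$ in all denominators above; this is the one spot where care is needed, and it is handled uniformly in $N$ by the bound $M\ge N$ (which turns every ratio into a quantity dominated by finitely many moments of the geometric family-size law) plus the concentration $M/N\to\gamma$. Everything else is bookkeeping with the finite moments of $Y_1$.
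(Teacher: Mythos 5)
Your proposal is correct and follows essentially the same route as the paper: identify the daily sampling scheme as a Cannings model, compute the pair-coalescence probability $c_N\sim 2(1-\tfrac1\gamma)/N$ via the hypergeometric sampling from the i.i.d.\ geometric family sizes, bound the triple-coalescence probability by $O(N^{-2})$ using $M\ge N$, and invoke the standard M\"ohle--Sagitov criterion. Your treatment of the random denominator (the bound $M\ge N$ plus dominated convergence with $M/N\to\gamma$) just fills in the details the paper leaves implicit.
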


The proof of Theorem \ref{thm:kingman} 
is given in Appendix A. Here we give a brief heuristic explanation of the time change factor $2(1-1/\gamma)/N$. This factor is asymptotically equal to $c_{\gamma,N}$,  the {\em pair coalescence probability in one generation}, which in turn equals the probability that the second of two sampled individuals belongs to the same (one generation) offspring as the first one. Hence, in the limit $N \to \infty$, $c_{\gamma,N}$ is asymptotically equal to the ratio $(\mathbb E \hat G  -1)/(N\mathbb E G)$, where $G$ is the one-generation offspring number of a single individual, and $\hat G$ is a size-biased version of $G$. If $G$ has a geometric distribution with expectation $\gamma$ (which is the case in our setting, as can be seen from Lemma \ref{xgeo} in the Appendix), then $\mathbb E \hat G = \mathbb E G^2 /  \mathbb E G = 2\gamma -1$, and hence $c_{\gamma,N} \sim  2(1-\frac 1\gamma)/N$.
(In particular, for large~$\gamma$, $G/\gamma$ is asymptotically exponential, $\mathbb E\hat G \sim 2\mathbb EG$, and $c_{\gamma, N} \sim \frac 2 N$.)

\subsubsection{Including selective advantage}\label{selection}
We now drop the assumption that the relative fitness is constant over the whole population, and include some selective advantage. Fix $r>0, \gamma>1$ as before. For $N\in\N$ let $\varrho_N\geq 0.$ Throughout this paper, we will assume that the sequence $(\varrho_N)_{N\in\N}$ satisfies the condition
\be\label{sNpolynom} \exists b\in(0,1/2): \varrho_N\sim N^{-b} \mbox{ as } N\to\infty.\ee 

We extend our basic population model in the following way. Assume that at day $i$ a number~$k$ among the $N$ individuals of the initial population have  a selective advantage in the sense that they reproduce at rate $r+\varrho_N,$ and the remaining $N-k$ individuals reproduce at rate $r.$ We call the selectively advantageous individuals the \emph{mutants}, and the others the wild-type individuals. We assume that fitness is heritable, meaning that offspring (unless affected by a mutation) retain the fitness of their parent. The intraday population size process at day $i$ is then of the form

\be \label{defY}
{Y}_t:= {Y}_t^{(N,k)}= M_t^{(k)}+Z_t^{(N-k)}, \quad t\ge 0,
\ee
where $(Z_t^{(N-k)})_{t\geq 0}$ is a Yule process with reproduction rate $r$, started with $Z_0^{(N-k)}=N-k$ individuals, while 
$(M_t^{( k)})_{t\geq 0}$ is a Yule process with reproduction rate $r+\varrho_N$, started with $M_0^k=k$ individuals, and independent of $(Z_t^{(N-k)})_{t\geq 0}.$ Note that for fixed $r$ and $\varrho_N$ the distribution of $({Y}_t)_{t\geq 0}$ is uniquely determined by the initial number $M_0^{(k)}=k$ of mutants.

\medskip

We apply stopping rule 2 to this model, which translates into stopping population growth at a deterministic time depending on $k$ (and $N$), namely at
\be \label{sigmak} \sigma_{k}:= \sigma_{k}^{(N)}= \inf\{t\geq 0: \E[Y_t]\geq \gamma N\}.\ee
This is still a deterministic time, though somewhat harder to calculate than $\sigma,$ which equals $\sigma_0$ in this notation. 
Due to our construction, at the end of day $i$ the total population has size ${Y}_{\sigma_{k}}$,
among which there are $M_{\sigma_{k}}^{(k)}$ mutants, and $Z_{\sigma_{k}}^{(N-k)}$ wild-type individuals. 
 
 \medskip
 
One of the main tasks of this paper will be to calculate the \emph{number of mutants} at the beginning of day $i,$ for $i\in\N_0.$ Assuming that we know the population $Y_{\sigma_k}=M_{\sigma_{k}}^{(k)}+Z_{\sigma_{k}}^{(N-k)}$ at the end of day $i-1,$ we apply Definition \ref{def:sampling}, which means that given $M_{\sigma_k}^{(k)}=M,$ and $ Z_{\sigma_k}^{(N-k)} =Z,$ we sample uniformly $N$ out of the $M+Z$ individuals. Denote by $K_i$ the number of mutants contained in this sample. Fixing $K_0$ and repeating this independently for $i\in\N$ defines the interday process $(K_i)_{i\in\N_0}$ counting the number of mutants in the model with selection at the beginning of each day. Summarizing, this process can be described as follows:

{\begin{proposition}[Model with selection]\label{def:selection}
Fix $\gamma>1, r>0$ and $\varrho_N, N\in\N$ satisfying \eqref{sNpolynom}. Fix $K_0\in\{1,...,N\}.$ Assume $K_{i-1}$ has been constructed, and takes the value $k$. Let $M$ follow a negative binomial distribution with parameters $k$ and $e^{-(r+\varrho_N)\sigma_k},$ and let $Z$ follow a negative binomial distribution with parameters $N-k$ and $e^{-r\sigma_k}$ independent of $M$. Conditional on $M$ and $Z,$ the number $K_i$ is determined by sampling from the hypergeometric distribution with parameters $N, M$ and $M+Z.$
\end{proposition}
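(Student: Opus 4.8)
The statement is essentially a bookkeeping consequence of the construction in \eqref{defY}--\eqref{sigmak} together with two standard facts, and the plan is to unwind the definitions in three short steps. First I would record the one-day intraday law. By construction the population at day $i$ evolves as $Y_t = M_t^{(k)} + Z_t^{(N-k)}$, with $M^{(k)}$ a Yule process of rate $r+\varrho_N$ started from $k$ individuals and $Z^{(N-k)}$ an independent Yule process of rate $r$ started from $N-k$ individuals. Since stopping rule 2 stops the growth at the \emph{deterministic} time $\sigma_k$ of \eqref{sigmak}, there is no optional-stopping subtlety: evaluating each Yule process at the fixed time $\sigma_k$ and invoking Corollary A.4 gives at once that $M_{\sigma_k}^{(k)}$ is negative binomial with parameters $k$ and $e^{-(r+\varrho_N)\sigma_k}$ and that $Z_{\sigma_k}^{(N-k)}$ is negative binomial with parameters $N-k$ and $e^{-r\sigma_k}$, the two being independent because the underlying Yule processes are.

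Second I would treat the sampling step. By Definition \ref{def:sampling} the parental population of day $i$ is obtained by drawing a uniform sample of size $N$, without replacement, from the $M_{\sigma_k}^{(k)}+Z_{\sigma_k}^{(N-k)}$ individuals present at the end of day $i-1$. Conditionally on $M_{\sigma_k}^{(k)}=M$ and $Z_{\sigma_k}^{(N-k)}=Z$ this is exactly the experiment of drawing $N$ balls without replacement from an urn containing $M$ marked (mutant) and $Z$ unmarked balls, so the number $K_i$ of marked balls drawn has, conditionally on $M$ and $Z$, the hypergeometric distribution with parameters $N$, $M$ and $M+Z$, which is the asserted recipe. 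One should note in passing that $Y_{\sigma_k}\geq N$ almost surely, since the Yule processes are non-decreasing and $Y_0=N$, so the sample of size $N$ is always well defined.

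Third I would spell out the Markov property. Because the vectors $(\nu^i)_{i\in\N_0}$ are i.i.d., the pair consisting of (intraday growth at day $i-1$, resampling at the end of day $i-1$) that produces day $i$ is, conditionally on the value $k$ of $K_{i-1}$, independent of the history of earlier days; hence $(K_i)_{i\in\N_0}$ is a Markov chain whose one-step transition kernel is precisely the composition of Steps 1 and 2. There is no genuine obstacle in this proof; the only points that require a little care are that $\sigma_k$ is deterministic (so the negative-binomial law of the Yule process at time $\sigma_k$ applies verbatim, without any change-of-measure or stopped-process argument), and that the hypergeometric parameters are read off in the correct order, namely $N$ the sample size, $M$ the number of mutants in the population, and $M+Z$ the total population size.
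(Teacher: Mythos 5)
Your proposal is correct and follows essentially the same route as the paper's own (much terser) proof: evaluate the two independent Yule processes at the deterministic time $\sigma_k$ to get the negative binomial laws, then observe that uniform sampling without replacement yields the hypergeometric distribution conditionally on $M$ and $Z$. The extra remarks on the deterministic nature of $\sigma_k$, on $Y_{\sigma_k}\geq N$, and on the Markov property are sound and merely make explicit what the paper leaves implicit.
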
 

\begin{proof}
This follows from the construction, noting that $(M_t)_{t\geq 0}$ and $(Z_t)_{t\geq 0}$ evolve independently until the deterministic time $\sigma_k,$ and recalling that sampling $N$ individuals without replacement out of $M$ of one type and $Z$ of another type is described by the hypergeometric distribution.
\end{proof}

\begin{remark}[More than two types]
The definition of the model with selection generalizes in an obvious way to situations where there are more than two different types of individuals in the population. If there are $\ell$ different types reproducing at $\ell$ different (fixed) rates, the population within one day grows like $\ell$ independent Yule processes with suitable initial values and reproduction rates, the stopping time is defined accordingly, and the sampling remains uniform over the whole population.
\end{remark}

Since the mutants reproduce faster, their proportion will increase (stochastically) during the day. Hence, sampling uniformly at random from the population at the end of day $i$ we expect to sample more than the initial number of mutants, meaning that the fitness of the 
population will increase over time.
 
\begin{proposition}[Selective advantage]\label{prop:selective_advantage} Under assumption \eqref{sNpolynom}, 
\begin{equation}\E[K_1|K_0=1]-1\sim \varrho_N\frac{\log \gamma}{r} \quad \mbox{as }\, N\to\infty.\end{equation}
\end{proposition}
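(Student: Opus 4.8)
The plan is to read off from Proposition~\ref{def:selection} the joint law of the relevant quantities for $K_0=1$ and then perform a second-order expansion around expectations. With $k=1$, the population at the end of day $0$ consists of $M:=M_{\sigma_1}^{(1)}$ mutants, geometrically distributed with mean $\mu_M:=e^{(r+\varrho_N)\sigma_1}$, and $Z:=Z_{\sigma_1}^{(N-1)}$ wild types, negative binomially distributed with mean $\mu_Z:=(N-1)e^{r\sigma_1}$, these being independent, while $\sigma_1=\sigma_1^{(N)}$ is the unique solution of $\mu_M+\mu_Z=\gamma N$. Since, conditionally on $M$ and $Z$, the variable $K_1$ is hypergeometric with parameters $N,M,M+Z$, we have $\E[K_1\mid K_0=1]=N\,\E\!\left[\frac{M}{M+Z}\right]$, so the claim reduces to showing $N\,\E[M/(M+Z)]=1+\varrho_N\frac{\log\gamma}{r}+o(\varrho_N)$.

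The first step is to locate $\sigma_1$. From the monotone defining equation and $e^{(r+\varrho_N)\sigma_1}\ge e^{r\sigma_1}$ one gets $\sigma_1\le\sigma=\frac{\log\gamma}{r}$, whence $e^{(r+\varrho_N)\sigma_1}\le\gamma e^{\varrho_N\sigma}\le 2\gamma$ for large $N$; feeding this back into $e^{(r+\varrho_N)\sigma_1}+(N-1)e^{r\sigma_1}=\gamma N$ gives $e^{r\sigma_1}\ge\gamma-O(1/N)$, so $\sigma_1=\sigma+O(1/N)$. Substituting, $\mu_M=e^{r\sigma_1}e^{\varrho_N\sigma_1}=\gamma\bigl(1+O(1/N)\bigr)\bigl(1+\varrho_N\sigma+O(\varrho_N^2)\bigr)$, and since $\varrho_N\sim N^{-b}$ with $b<1/2$ makes $\varrho_N^2=o(\varrho_N)$ and $1/N=o(\varrho_N)$, this yields $\mu_M/\gamma=1+\varrho_N\frac{\log\gamma}{r}+o(\varrho_N)$, together with $\mu_Z=\gamma N-\mu_M=\gamma N+O(1)$.

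It then remains to show that replacing the fluctuating denominator $M+Z$ by its mean $\gamma N$ costs only $o(\varrho_N)$, i.e. $N\,\E[M/(M+Z)]=\mu_M/\gamma+o(\varrho_N)$. Set $D:=M+Z-\gamma N$, which has mean zero, and use $\tfrac{1}{1+x}=1-x+\tfrac{x^2}{1+x}$ with $x=D/(\gamma N)>-1$ to write
\begin{equation*}
\frac{M}{M+Z}=\frac{M}{\gamma N}-\frac{MD}{(\gamma N)^2}+\frac{MD^2}{(\gamma N)^2(M+Z)}.
\end{equation*}
Taking expectations and multiplying by $N$ gives $N\,\E[M/(M+Z)]=\frac{\mu_M}{\gamma}-\frac{\E[MD]}{\gamma^2N}+\frac{\E[MD^2/(M+Z)]}{\gamma^2N}$. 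By independence, $\E[MD]=\E[M(M-\mu_M)]+\E[M]\,\E[Z-\mu_Z]=\Var(M)=O(1)$ (as $M$ is geometric with $\mu_M\to\gamma$), so the first correction is $O(1/N)=o(\varrho_N)$. For the second I would exploit the deterministic bounds $M\ge 1$ and $Z\ge N-1$ (the two Yule processes start at $1$ and $N-1$ and are nondecreasing), which give $M+Z\ge N$ and hence
\begin{equation*}
\frac{MD^2}{M+Z}\le\frac{M}{N}\,D^2\le\frac{2M}{N}\bigl((M-\mu_M)^2+(Z-\mu_Z)^2\bigr);
\end{equation*}
since $M$ has bounded moments and, by independence, $\E[M]\,\Var(Z)=O(1)\cdot O(N)$, this gives $\E[MD^2/(M+Z)]=O(1)$ and the second correction is again $O(1/N)=o(\varrho_N)$. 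Summing the three terms yields $\E[K_1\mid K_0=1]-1=\varrho_N\frac{\log\gamma}{r}+o(\varrho_N)$.

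The step needing the most care is the control of the second-order remainder $\E[MD^2/(M+Z)]$: the denominator fluctuates on the scale $\sqrt N$ about $\gamma N$, and the crude bound $M/(M+Z)\le 1$ is useless here (it would leave an $O(1)$ error in $N\,\E[M/(M+Z)]$, drowning the $O(\varrho_N)$ signal when $b$ is close to $1/2$); the resolution is the a priori lower bound $M+Z\ge N$, which buys a factor of order $1/N$ before one invokes the moment bounds for the geometric variable $M$ and the linear-in-$N$ growth of $\Var(Z)$. The only other technical point, the asymptotics of the implicitly defined day length $\sigma_1$, is routine once the a priori bound $\sigma_1\le\sigma$ has been noticed.
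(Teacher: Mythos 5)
Your proof is correct, but it takes a genuinely different route from the paper. The paper obtains Proposition \ref{prop:selective_advantage} as a byproduct of the Galton--Watson coupling built for the fixation analysis: Remark \ref{survive} derives it from Lemma \ref{lem:expvarGW} together with Lemma \ref{lem:one-step}, i.e.\ one sandwiches $\underline{K}_1\leq K_1\leq\overline{K}_1$ on an event of probability $1-2e^{-cN^{1-2\alpha}}$ (so that the complementary event contributes at most $N\cdot 2e^{-cN^{1-2\alpha}}=o(\varrho_N)$ to the expectation), computes the means of the two mixed-binomial offspring laws exactly via Corollary \ref{cor:sigma}, and then lets the sandwich parameter $\varepsilon\to0$ to squeeze the $(1-\varepsilon)$ factor out of the lower bound. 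You instead compute $\E[K_1\mid K_0=1]=N\,\E[M/(M+Z)]$ directly from the hypergeometric representation of Proposition \ref{def:selection} and expand the random denominator to second order around $\gamma N$, using $\E[MD]=\Var(M)=O(1)$ and the a priori bound $M+Z\geq N$ to show the remainder contributes only $O(1/N)=o(\varrho_N)$; your treatment of $\sigma_1$ reproduces the $k=1$ case of Lemma \ref{lem:sigma}. Your argument is more elementary and self-contained (no coupling, no $\varepsilon$-limit, and the constant comes out in one shot), whereas the paper's route costs nothing extra because the coupling of Section \ref{subsect:GW-constr} is needed anyway for Theorem \ref{thm:fixation}; your identification of the second-order remainder $\E[MD^2/(M+Z)]$ as the step requiring care, and its resolution via $M+Z\geq N$, is exactly right.
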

Under the condition $\{K_0=1\}$ the $N-K_1$ wild-type individuals that are sampled at the end of day~$0$ are exchangeably distributed upon the $N-1$ wild-type ancestors that were present at the beginning of day $0$. Hence, the expected (sampled) offspring of each of these wild-type ancestors is $\sim 1$ as $N\to \infty$, and thus, in view of Proposition \ref{prop:selective_advantage},
we can say that the \emph{selective advantage} of a single mutant, resulting from the increase of its reproduction rate from $r$ to $r+\varrho_N$, is given by $\varrho_N\frac{\log \gamma}{r}.$

\medskip

The main result of this section concerns the fixation probability of a beneficial mutation affecting one individual at the beginning of day 0, and an estimate of the time that it takes for a successful mutation to go to fixation (or for an unsuccessful mutation to go extinct). Let 

\be \pi_N:=\P\big(\exists i\in\N : K_i =N\,|\, K_0 =1\big)\ee
denote the probability of fixation if the population size process is started with one mutant at day 0 and write
\be\label{def:tfix}\tau_{\rm fix}^N:= \inf\{i\ge 1: K_i=N\}\in [0,\infty]\ee
for the time of fixation, and 
\be\tau^N_{\rm ext}:=\inf\{i\geq 1: K_i=0\}\in[0,\infty]\ee
for the time until the mutation has been lost from the population, with the usual convention that $\inf\emptyset =\infty.$ Let 
\[\tau^N:=\tau_{\rm fix}^N\wedge\tau_{\rm ext}^N\]
be the first day at which either the whole population carries the mutation, or there are no more individuals in the population carrying the mutation. Let
\be \label{def:c_gamma}
C(\gamma):=\frac{\gamma\log \gamma}{\gamma-1}.
\ee

\begin{theorem}[Probability and speed of fixation]\label{thm:fixation}
Assume \eqref{sNpolynom}, {and assume that a mutation affects exactly one individual at day 0, and that no further mutations happen after the first one.} Then as $N\to\infty,$
\be \label{probfix} \pi_N\sim \varrho_N\frac{C(\gamma)}{r}.\ee
 Moreover, 
for any $\delta>0$ there exists $N_\delta\in \N$ such that for all $N\geq N_\delta$ 
\be\label{timefix}\P(\tau^N>\varrho_N^{-1-3\delta})\leq (7/8)^{\varrho_N^{-\delta}}.\ee
\end{theorem}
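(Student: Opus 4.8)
The plan is to compare the mutant-count chain $(K_i)$, as long as it stays small relative to $N$, with near-critical Galton--Watson (GW) processes, and to read off both the fixation probability and the speed of fixation from this comparison. Write $s_N:=\varrho_N\frac{\log\gamma}{r}$ for the selective advantage of Proposition~\ref{prop:selective_advantage}. When $K_i=k$ is small, the number of day-$(i{+}1)$ descendants of a given mutant is, to leading order, an independent copy of $\mathrm{Bin}(G,1/\gamma)$: the mutant grows like a Yule process of rate $r+\varrho_N$ until the time $\sigma_k$ (close to $\sigma=\tfrac{\log\gamma}{r}$ since $k\ll N$), producing a geometric number $G$ of descendants with $\E[G]\approx\gamma(1+s_N)$, and each descendant is kept in the uniform $N$-sample out of the $\approx\gamma N$ individuals present with probability $\approx1/\gamma$, roughly independently. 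A direct computation gives this offspring law mean $1+s_N$ and variance $\to c_\gamma=2(1-\tfrac1\gamma)$ as $N\to\infty$ --- the same constant appearing in Theorem~\ref{thm:kingman}. Granting a coupling that makes this precise, $\pi_N$ is, up to negligible corrections, the survival probability of such a near-critical GW process, which by Lemma~\ref{lem:GW} is $\frac{2s_N}{c_\gamma}(1+o(1))=\varrho_N\frac{C(\gamma)}{r}(1+o(1))$, i.e.\ \eqref{probfix}.

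\emph{The coupling (the main obstacle).} I would build, on one probability space, GW processes $(\underline K_i)$, $(\overline K_i)$ with offspring means $1+s_N(1-o(1))$ and $1+s_N$ and offspring variances converging to $c_\gamma$, together with $(K_i)$, so that $\underline K_i\le K_i\le\overline K_i$ for all $i$ up to the first time the window $\{1,\dots,\theta_N\}$ is left, where $\theta_N\to\infty$ is a cutoff with $\varrho_N^{-1}\ll\theta_N\ll\sqrt N$. Producing this coupling and keeping it controlled over the required number of days is the hard part: on each day one must (i) replace the hypergeometric sampling of $N$ individuals out of $M+Z\approx\gamma N$ by independent $\mathrm{Bernoulli}(1/\gamma)$ thinnings, the discrepancy being driven by $(M+Z)/(\gamma N)-1$, which is of order $N^{-1/2}$; (ii) control the fluctuations of the total size $M+Z$ about its mean $\gamma N$; and (iii) control the dependence on $k$ of the stopping time $\sigma_k$, which enters the per-mutant offspring mean. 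These per-day discrepancies must be accumulated without spoiling the estimate, and this is exactly where the hypothesis $b<1/2$ enters: it guarantees that the scale $\varrho_N^{-1}\asymp s_N^{-1}$, on which a surviving near-critical GW process escapes into the regime of (roughly) deterministic exponential growth, still lies well below the largest level $\theta_N\sim\sqrt N$ up to which the coupling can be sustained. With the coupling in hand, $\P(K_i\text{ ever reaches }\theta_N\mid K_0=1)$ is squeezed between the probabilities that $\underline K$, resp.\ $\overline K$, reach $\theta_N$, both equal to $\frac{2s_N}{c_\gamma}(1+o(1))$ by Lemma~\ref{lem:GW}.

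\emph{From escape to fixation.} It remains to show that, conditionally on $(K_i)$ reaching $\theta_N$, it fixates (hits $N$) with probability $\to1$, and does so within $O(s_N^{-1}\log N)$ days. Since $\theta_N\gg s_N^{-1}$, a GW process sitting at level $\theta_N$ with mean $\ge1+s_N(1-o(1))$ returns to $0$ with probability at most $\big(1-\tfrac{2s_N}{c_\gamma}(1+o(1))\big)^{\theta_N}=e^{-\Omega(s_N\theta_N)}\to0$, so with high probability the mutant count keeps growing. One then follows it through three further phases, each handled by a law-of-large-numbers / concentration estimate together with the drift identity $\E[K_{i+1}\mid K_i=k]-k\sim s_Nk\big(1-\tfrac kN\big)$: a moderate regime $\theta_N\to\varepsilon N$ of roughly exponential growth; a bulk regime $\varepsilon N\to(1-\varepsilon)N$ driven by the logistic drift, where (again because $b<1/2$) the cumulative fluctuations are dominated by the drift; and a near-fixation regime in which the wild-type count $N-K_i$ is itself a subcritical GW process of mean $1-s_N(1+o(1))$ and dies out. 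Each phase is traversed in $O(s_N^{-1}\log N)$ days. Hence $\pi_N\ge\P(\text{reach }\theta_N)-o(s_N)$, while trivially $\pi_N\le\P(\text{reach }\theta_N)$, and \eqref{probfix} follows.

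\emph{Speed of fixation.} For \eqref{timefix}, put $\ell_N:=\lfloor\varrho_N^{-1-2\delta}\rfloor$ and split $\{1,\dots,\lfloor\varrho_N^{-1-3\delta}\rfloor\}$ into roughly $\varrho_N^{-\delta}$ consecutive blocks of length $\ell_N$. The key claim is that
\[
\inf_{1\le k\le N-1}\P_k\big(\tau^N\le\ell_N\big)\ \longrightarrow\ 1 \qquad (N\to\infty),
\]
where $\P_k$ refers to a block started from $k$ mutants. This uses the three-phase picture once more: from a small state the GW comparison gives $\tau^N\le\ell_N$ with probability $\to1$, the mutation either dying out within $O(s_N^{-1})\ll\ell_N$ days (conditioned on extinction, the GW process is subcritical with mean $1-s_N(1+o(1))$) or escaping to $\theta_N$ and then fixating within $O(s_N^{-1}\log N)\ll\ell_N$ further days; from a bulk state the logistic sweep reaches $0$ or $N$ within $O(s_N^{-1}\log N)\ll\ell_N$ days with probability $\to1$; from a state near $N$ the subcritical wild-type count hits $0$ within $O(s_N^{-1}\log N)\ll\ell_N$ days with probability $\to1$. (One has $\ell_N\gg s_N^{-1}\log N$ since $\ell_N\asymp\varrho_N^{-1-2\delta}$ while $s_N^{-1}\log N\asymp\varrho_N^{-1}\log(1/\varrho_N)$.) Given the claim, the strong Markov property applied block by block yields $\P(\tau^N>\varrho_N^{-1-3\delta})\le c^{\,\varrho_N^{-\delta}(1-o(1))}$ for some constant $c<1$, which is at most $(7/8)^{\varrho_N^{-\delta}}$ for all $N$ large, giving \eqref{timefix}. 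The genuine obstacle throughout is the long-time near-critical GW coupling of the second paragraph; the phase analysis of the last two paragraphs, though lengthy, relies only on standard branching-process and concentration estimates.
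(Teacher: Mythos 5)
Your proposal follows essentially the same route as the paper: a sandwich coupling of $(K_i)$ between near-critical Galton--Watson processes (binomial vs.\ hypergeometric sampling, with the $N^{-1/2}$-order fluctuations of the total population controlled so that the perturbation of the offspring mean is $o(\varrho_N)$ precisely because $b<1/2$), Lemma~\ref{lem:GW} for the survival probability, a logistic ODE/drift argument for the bulk phase, a subcritical GW process for the wild-type count near fixation, and a block-iteration via the strong Markov property for \eqref{timefix}. The only differences are cosmetic (your extra cutoff $\theta_N\ll\sqrt N$ versus the paper's coupling directly up to $\varepsilon N$, and your claim that the uniform per-block success probability tends to $1$ where a uniform positive lower bound, which is all the paper establishes and all your block argument needs, already suffices), so the proposal is correct in substance.
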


The proof, which will be given in Section \ref{proofsec}, relies on a comparison with a  supercritical (near-critical) Galton-Watson process in the ``early phase of the sweep''. While the basic idea is classical (dating back to work of Fisher from the 1920's), the scaling \eqref{sNpolynom} of the supercriticality and the specific nature of our Cannings dynamics required new arguments and a delicate analysis. For related results on near-critical Galton-Watson processes (which in some parts inspired our reasoning) see the recent work of Parsons \cite{Parsons}.

\subsection{Genetic and adaptive evolution}\label{sect:adaptation}
Our ultimate goal is to understand the deceleration of the increase in the relative fitness observed in \cite{WRL}, in particular as compared to the linearly increasing number of successful mutations (``adaptive versus genetic evolution''). In our model the relevant scales for the two processes turn out to be different, since the assumptions are such that many successful mutations are needed in order to have a change of approximately one unit in the relative fitness. 
 
This section is divided into two parts. First, we study the model on a short time scale, which is the relevant one for the arrivals of successful mutations. We prove that under some assumptions on the model parameters the number of successful mutations converges on a suitable time scale to a standard Poisson process. Afterwards, we introduce the process of relative fitness of the population, and we show that this process converges on a longer time scale to a deterministic function. 
 
\subsubsection{Genetic and adaptive evolution on a short scale}\label{sect:short_scale}
The assertion of Theorem \ref{thm:fixation} can be rephrased as follows:
\textit{In a background of wild-type individuals that reproduce at rate $r$, a beneficial mutation that leads to a reproduction rate $r+\varrho_N$ has a probability of fixation obeying 
\eqref{probfix}.} Besides recalling condition~\eqref{sNpolynom} on the selection, in the following assumption we require that the mutation rate is small enough to exclude ``effective clonal 
interference'' between beneficial mutations.

\begin{assumption}[Additive, moderately strong selection-weak mutation]
Beneficial mutations occur and act in such a way that the following hold:
\begin{itemize}
\item [i)] Beneficial mutations add $\varrho_N$ to the reproduction rate  of the individual that suffers the mutation.
\item [ii)] In each generation, with probability $\mu_N$ there occurs a beneficial mutation.
 The mutation affects only one (uniformly chosen) individual, and 
every offspring of this individual also carries the mutation.  
\item [iii)] There exists $0<b<1/2,$ and $a>3b,$ such that $\mu_N\sim N^{-a}$ and $\varrho_N\sim N^{-b}$ as $N\to\infty.$
\end{itemize}
\end{assumption}

We use the term \emph{moderately strong selection} in order to indicate that the strength of selection in our model is between what is generally called \emph{strong selection}, where $\varrho_N=O(1),$ and \emph{weak selection} where $\varrho_N=O(N^{-1})$ as $N\to\infty.$ Models with such types of selection were recently considered in the context of density dependent birth-death-mutation processes by Parsons \cite{Parsons, parsons2008absorption}. 
The term {\em weak mutation} is used to indicate that the mutation rate is small enough to guarantee the absence of clonal interference as $N\to \infty$, which we will prove in Proposition \ref{noclonalint}.

\begin{definition}[Interfering mutations, clonal interference] Consider a pair of successive mutations. Recall that $\tau^N$ denotes the first time after the first mutation at which the
individual reproduction rate is constant within the population. Denote by $m_N$ the time of the second mutation. We say that the two \emph{mutations interfere} if $m_N<\tau^N$, and that 
\emph{clonal interference} occurs if there exists a pair of interfering mutations. In particular, there is \emph{no clonal interference until day $i$} if there is no mutation starting until day $i$ that interferes with any other mutation.
\end{definition}

\begin{remark}
(i) As we will see in Proposition \ref{noclonalint} below, Assumption A iii) guarantees that the probability of clonal interference of any pair of successive mutations is of order at most $\mu_N \varrho_N^{-1}.$ In particular, this ensures that the probability of not observing any event of clonal interference on a time scale of order $\mu_N^{-1} \varrho_N^{-2}$ (which we will see to be relevant for our model) tends to 1 as $N\to\infty.$ 
\\ (ii) Our assumption A iii) is somewhat stronger than requiring $\mu_N\ll \varrho_N,$ which is a standard assumption in adaptive dynamics excluding clonal interference, see e.g. \cite{De13chemostatmodel}.
In view of Theorem \ref{thm:fixation} and of our detailed calculations in Section 3 we think that replacing $a>3b$ by $a>b$ in Assumption A iii) should still lead to the same results. However, there are substantial technical difficulties to consider in this case, since $a>b$ only excludes clonal interference of two successive mutations, but not on the longer time scales that are relevant for our results.
\\ (iii) While there is little doubt that there is clonal interference (of successive beneficial mutations) in the Lenski experiment \cite{MLB15}, it is noticeable that, as will be seen  in Theorem \ref{thm:adapt},  in order to qualitatively explain certain features of the experimental results on the relative fitness of the population, it is not mandatory to include clonal interference as a model assumption. Including clonal interference into the model will be one goal of our future research in this topic.
\end{remark}

\begin{proposition}[Probability of clonal interference]\label{noclonalint} In our model, for any $\delta>0$ there exists $N_\delta\in\N$ such that for all $N\geq N_\delta,$
\[\P(m_N<\tau^N)\leq \mu_N \varrho_{N}^{-1-\delta}.\]
In particular, under Assumption A iii), for any $T>0,$
\be \lim_{N\to\infty} \P(\mbox{no clonal interference until day } \lfloor \varrho_N^{-2}\mu_N^{-1}T\rfloor )= 1.\ee
\end{proposition}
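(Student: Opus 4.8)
The plan is to deduce both assertions from the fixation-time tail bound \eqref{timefix} of Theorem~\ref{thm:fixation}: the pointwise estimate follows from a short truncation, and the ``no clonal interference'' statement from a union bound over the $\Theta(\varrho_N^{-2})$ mutations that can occur before day $\lfloor\varrho_N^{-2}\mu_N^{-1}T\rfloor$, each summand of which I would reduce to the one-pair situation of the first assertion.

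For the first assertion I would fix $\delta>0$, set $\delta':=\delta/4$, and truncate at $T_N:=\varrho_N^{-1-3\delta'}$. By Assumption A(ii) the marks recording which days carry a beneficial mutation are independent of the intraday Yule dynamics and of the daily sampling, so the time $m_N$ of the second mutation (counted from the first) is geometric with parameter $\mu_N$ and independent of $\tau^N$, which by construction depends only on the single-mutant sweep. Hence
\[
\P(m_N<\tau^N)\;\le\;\P(\tau^N>T_N)+\P(m_N<T_N)\;\le\;(7/8)^{\varrho_N^{-\delta'}}+T_N\mu_N,
\]
where the second term uses $1-(1-\mu_N)^n\le n\mu_N$ and the first is \eqref{timefix} with $\delta'$ in place of $\delta$. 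Since $3\delta'<\delta$ and $\varrho_N\to0$, one has $T_N\mu_N=\mu_N\varrho_N^{-1-3\delta'}\le\tfrac12\mu_N\varrho_N^{-1-\delta}$ for $N$ large, while $(7/8)^{\varrho_N^{-\delta'}}$ decays faster than every power of $N$ whereas $\mu_N\varrho_N^{-1-\delta}$ decays only polynomially; adding the two bounds yields $\P(m_N<\tau^N)\le\mu_N\varrho_N^{-1-\delta}$ for $N\ge N_\delta$.

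For the second assertion I would label the successive beneficial mutations $j=1,2,\dots$, write $A_j$ for the event that mutations $j$ and $j+1$ interfere, let $L_i$ be the (binomial$(i,\mu_N)$) number of mutations up to day $i$, and set $i_N:=\lfloor\varrho_N^{-2}\mu_N^{-1}T\rfloor$ together with a deterministic cut-off $n_N:=\lceil 2T\varrho_N^{-2}\rceil\ge 2\,\E L_{i_N}$; the event in question is $\bigcup_{j\le L_{i_N}}A_j$. On $\bigcap_{k<j}A_k^c$ one checks by induction that mutation $j-1$ has fixed or been lost before mutation $j$ arrives, so the population is homogeneous at the arrival of mutation $j$ and the pair $(\tau^N,m_N)$ associated with it has exactly the joint law analysed in the first assertion, with a background reproduction rate that stays in a fixed compact interval $[r_0,r_0+C_T]$ on an event $G_N$ with $\P(G_N)\to1$ — because successful mutations have conditional probability $O(\varrho_N)$ each (by \eqref{probfix}) and hence number $O(\varrho_N^{-1})$ among the first $n_N$ mutations, raising the rate by only $O(1)$. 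Restricting to $G_N$, using $\P(L_{i_N}>n_N)\to0$ (Chebyshev, $\E L_{i_N}\to\infty$), and applying a union bound with this sequential conditioning gives, for $N$ large,
\[
\P(\text{clonal interference until day }i_N)\;\le\;\P(G_N^c)+\P(L_{i_N}>n_N)+n_N\,\mu_N\varrho_N^{-1-\delta}\;\le\;o(1)+2T\,\mu_N\varrho_N^{-3-\delta}.
\]
Choosing $\delta\in\bigl(0,(a-3b)/b\bigr)$ — admissible because $a>3b$ by Assumption A(iii) and the first assertion holds for every $\delta>0$ — makes the last term $\asymp N^{-(a-3b-b\delta)}\to0$, which proves the claim; this exponent computation is exactly where the hypothesis $a>3b$ enters.

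The step I expect to require the most care is the reduction of each summand $\P(A_j\mid\bigcap_{k<j}A_k^c)$ to the first assertion: one must argue carefully that on the no-prior-interference event the $j$-th mutation sees a homogeneous background with its own sweep governed by fresh randomness, and — since that background rate drifts — that \eqref{timefix} holds with constants uniform over compact sets of reproduction rates, a uniformity implicit in and to be extracted from the proof of Theorem~\ref{thm:fixation}. Granting that, the remainder is the routine truncation and first-moment bookkeeping indicated above.
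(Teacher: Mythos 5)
Your proof of the first assertion is essentially the paper's: both truncate at a time of order $\varrho_N^{-1-\delta'}$, bound $\P(\tau^N>\varrho_N^{-1-\delta'})$ by the stretched-exponential tail \eqref{timefix}, bound the probability that the next mutation arrives before that time by $\mu_N\lfloor\varrho_N^{-1-\delta'}\rfloor$ via the Bernoulli inequality, and absorb the superpolynomially small term. That part is correct and needs no further comment.

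For the second assertion you take a genuinely different, and in my view cleaner, route. The paper indexes by \emph{successful} mutations: it defines $E_j$ as the absence of interference up to the start of the $j$-th successful mutation, computes $\P(E_1)$ by summing over the geometric number $L$ of unsuccessful attempts preceding the first success (with success parameter $\sim C(\gamma)\varrho_N/r_0$ from \eqref{probfix}), iterates to get $\P(E_{\lfloor n\varrho_N^{-1}\rfloor})\ge 1-n\mu_N\varrho_N^{-3-3\delta''}(1+o(1))$, and then must separately show that the day $I_n$ of the $\lfloor n\varrho_N^{-1}\rfloor$-th successful mutation exceeds $\varrho_N^{-2}\mu_N^{-1}T$ with high probability, via stochastic domination by sums of i.i.d.\ geometrics and Cram\'er's large deviation principle. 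You instead index directly by the raw mutation count: the number of mutations before day $\lfloor\varrho_N^{-2}\mu_N^{-1}T\rfloor$ is binomial with mean $\asymp T\varrho_N^{-2}$, you cut it off at $n_N=\lceil 2T\varrho_N^{-2}\rceil$ by Chebyshev, and apply a sequential union bound $\sum_{j\le n_N}\P(A_j\mid\bigcap_{k<j}A_k^c)\le n_N\,\mu_N\varrho_N^{-1-\delta}$. This avoids the paper's translation from successful-mutation index back to days (the $I_n$ step and the LDP) at the price of controlling the easier quantity $L_{i_N}$; the final exponent $a-3b>0$ is of course the same, since both arguments count $\Theta(\varrho_N^{-2})$ mutation events each contributing $\mu_N\varrho_N^{-1-\delta}$. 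The one point you rightly flag as delicate --- that on the no-prior-interference event the $j$-th mutation sees a homogeneous background whose rate has drifted to $r_0+O(1)$, so that \eqref{timefix} must be applied with constants uniform over $r$ in a compact interval --- is also present, and equally implicit, in the paper's proof (which writes the success parameter as $C(\gamma)\varrho_N/(r_0+(j-1)\varrho_N)$ without further comment); your explicit control of the number of successful mutations among the first $n_N$ attempts to keep the rate in $[r_0,r_0+C_T]$ is if anything more careful than the original. I see no gap beyond what the paper itself leaves implicit.
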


A quantity of interest is the \emph{number of successful mutations} up to a given day. Let $H_i$ denote the number of eventually successful mutations that have started until day $i,$ with $H_0=0.$ Since mutations arrive independently at rate $\mu_N,$ and fixate with probability $\sim \frac{C(\gamma)\varrho_N}{r_0}$ (at least in the absence of clonal interference), we expect that successful mutations arrive at rate $\frac{C(\gamma)\mu_N\varrho_N}{r_0}.$ Indeed, Proposition \ref{noclonalint} allows us to make this rigorous.

\begin{theorem}[Process of successful mutations]\label{thm:mutations} Let $H_i, i\in\N,$ be the number of successful mutations initiated until day $i,$ with $H_0=0.$ Let $r_0>0$ be the reproduction rate of the population at day 0, and let $(M(t))_{t\geq 0}$ be a standard Poisson process. Under Assumption A, for any $T>0,$ the process 
$(H_{\lfloor (\varrho_N\mu_N)^{-1}t\rfloor})_{0\leq t\leq T}$ converges in distribution (with respect to the Skorokhod topology on the space of c\`adl\`ag paths) to $\big(M(\frac{C(\gamma)}{r_0}t)\big)_{0\leq t\leq T}.$
\end{theorem}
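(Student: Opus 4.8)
The plan is to use Proposition~\ref{noclonalint} to reduce, on an event of probability tending to~$1$, the process $(H_i)$ to a thinning of the i.i.d.\ Bernoulli arrival pattern, and then to invoke the classical functional law of rare events. Write $m_N:=\lfloor(\varrho_N\mu_N)^{-1}T\rfloor$ and let $A_N$ be the event that there is no clonal interference until day~$m_N$. Since $\varrho_N<1$ for $N$ large, $m_N\le\lfloor\varrho_N^{-2}\mu_N^{-1}T\rfloor$, so Proposition~\ref{noclonalint} yields $\P(A_N)\to1$. On $A_N$ the dynamics is transparent: each day a beneficial mutation arrives with probability~$\mu_N$, independently over days; whenever one arrives the population is homogeneous at a rate $r_0+j\varrho_N$, where $j$ is the number of the strictly earlier arrivals that turned out to be successful (the previous sweep, if any, has already resolved, by the definition of $A_N$); and the new mutation is eventually successful with probability $\pi_N(r_0+j\varrho_N)$, the internal randomness of distinct sweeps being independent as they occupy disjoint blocks of days. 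A routine construction on a common probability space then shows that $(H_i)_{i\le m_N}$ coincides on $A_N$ with the process $(\widetilde H_i)$ which, at the $n$-th arrival, is incremented by a $\mathrm{Bernoulli}\bigl(\pi_N(r_0+\varrho_N J_n)\bigr)$ coin, where $J_n$ is the value of $\widetilde H$ just before that arrival, these coins being mutually independent and independent of the arrival pattern. Since convergence in distribution is unaffected by events of vanishing probability, it suffices to prove the assertion with $(\widetilde H_i)$ in place of $(H_i)$.

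I would first bound the total number of successful mutations. By Theorem~\ref{thm:fixation}, together with the monotonicity of the selective advantage $\varrho_N\log\gamma/r$ (and hence of $\pi_N$) in the standing rate~$r$, there is $\overline q_N\sim C(\gamma)\varrho_N/r_0$ with $\pi_N(r)\le\overline q_N$ for all $r\ge r_0$ and $N$ large. Hence $\widetilde H_{m_N}$ is stochastically dominated by a $\mathrm{Binomial}(m_N,\mu_N\overline q_N)$ variable, whose mean tends to $C(\gamma)T/r_0$; by Poisson's limit theorem $\limsup_N\P(\widetilde H_{m_N}\ge L)\le\P\bigl(\mathrm{Poisson}(C(\gamma)T/r_0)\ge L\bigr)=:\epsilon_L$, with $\epsilon_L\downarrow0$ as $L\to\infty$. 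On the event $\{\widetilde H_{m_N}<L\}$ every rate occurring before day~$m_N$ lies in the interval $[r_0,r_0+L\varrho_N]$, which shrinks to $\{r_0\}$; there Theorem~\ref{thm:fixation}, applied along the convergent sequences of rates $r_0+j\varrho_N\to r_0$ (a local uniformity that its proof provides), gives $\max_{0\le j<L}\bigl|\pi_N(r_0+j\varrho_N)-q_N^\ast\bigr|=o(\varrho_N)$, where $q_N^\ast:=C(\gamma)\varrho_N/r_0$.

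Next I would couple $(\widetilde H_i)$ with the reference process $(\widehat H_i)$ which thins the \emph{same} arrival pattern by \emph{independent} coins of head probability $q_N^\ast$, i.e.\ the partial-sum process of i.i.d.\ $\mathrm{Bernoulli}(\mu_N q_N^\ast)$ variables. Using a common uniform random variable at each arrival, a discrepancy at an arrival seeing $\widetilde H=j$ occurs with probability $\bigl|\pi_N(r_0+j\varrho_N)-q_N^\ast\bigr|$. On $A_N\cap\{\widetilde H_{m_N}<L\}$ one always has $j<L$, so, the expected number of arrivals up to day~$m_N$ being at most $\varrho_N^{-1}T$, the probability that $(\widetilde H_i)$ and $(\widehat H_i)$ ever disagree on that event is at most $\varrho_N^{-1}T\cdot o(\varrho_N)=o(1)$. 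Since $(\varrho_N\mu_N)^{-1}\mu_N q_N^\ast=q_N^\ast/\varrho_N\to C(\gamma)/r_0$, the classical functional law of rare events --- partial-sum processes of i.i.d.\ Bernoulli variables with vanishing success probability, rescaled, converge in $D([0,T])$ to a Poisson process --- gives $\bigl(\widehat H_{\lfloor(\varrho_N\mu_N)^{-1}t\rfloor}\bigr)_{0\le t\le T}\Rightarrow\bigl(M(C(\gamma)t/r_0)\bigr)_{0\le t\le T}$. Combining this with the coupling and the reduction to $A_N\cap\{\widetilde H_{m_N}<L\}$: for every bounded functional $\Phi$ that is continuous in the Skorokhod topology, the value of $\E\Phi$ on the rescaled $(\widetilde H)$ differs from its value on the rescaled $(\widehat H)$ by at most $\|\Phi\|_\infty\bigl(\P(A_N^c)+2\epsilon_L+o(1)\bigr)$; letting $N\to\infty$ and then $L\to\infty$ proves $(\widetilde H_{\lfloor(\varrho_N\mu_N)^{-1}t\rfloor})_{0\le t\le T}\Rightarrow(M(C(\gamma)t/r_0))_{0\le t\le T}$, hence the same for $(H_i)$ since the two processes agree on $A_N$.

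The main obstacle is the interplay between the slow drift of the standing reproduction rate away from $r_0$ and the $r$-dependence of the fixation probability. Making the ``self-consistently $O(1)$'' heuristic rigorous is the purpose of the localization at level~$L$, and one must extract from Theorem~\ref{thm:fixation} the mild local uniformity $\pi_N(r_N)=C(\gamma)\varrho_N/r_0+o(\varrho_N)$ whenever $r_N\to r_0$ --- this is precisely what makes the coupling to the reference process essentially free. The remaining ingredients, the reduction via Proposition~\ref{noclonalint} to the clean thinned-Bernoulli dynamics and the passage from a vanishing-intensity Bernoulli array to a Poisson process, are routine.
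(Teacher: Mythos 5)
Your argument is correct in substance but follows a genuinely different route from the paper. The paper also first discards the event of clonal interference via Proposition~\ref{noclonalint}, but then sends the process to a cemetery state $-\infty$ on that event, views the resulting $(\tilde H_i)$ as a Markov chain with one-step up-probability $\frac{C(\gamma)\mu_N\varrho_N}{r_0+n\varrho_N}\P(D_{i+1}\,|\,D_i)$, computes the rescaled discrete generator on the scale $i=\lfloor\varrho_N^{-1}\mu_N^{-1}t\rfloor$, and invokes the semigroup-convergence theorems of Ethier--Kurtz (Theorems 1.6.5 and 4.2.6) to identify the limit as the Poisson generator. You instead couple the thinned-Bernoulli dynamics to a reference process with constant success probability $q_N^\ast=C(\gamma)\varrho_N/r_0$, show the coupling fails with probability $o(1)$ on a localized event, and finish with the classical functional law of rare events. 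What each buys: the generator route is shorter and fits the template reused for Theorem~\ref{thm:adapt}, but it quietly requires the convergence $\frac{C(\gamma)}{r_0+n\varrho_N}\to\frac{C(\gamma)}{r_0}$ to hold in a sense uniform enough over the state space, which fails for $n$ of order $\varrho_N^{-1}$; your localization at level $L$ (justified by the $O(1)$ Poisson-type bound on $H_{m_N}$) addresses exactly this point explicitly, so your argument is more elementary and, on this issue, more careful than the paper's.

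Two points deserve a remark. First, both you and the paper use Theorem~\ref{thm:fixation} for standing rates $r_0+j\varrho_N$ rather than a fixed $r$; you at least flag that the needed local uniformity ($\pi_N(r_N)=C(\gamma)\varrho_N/r_0+o(\varrho_N)$ for $r_N\to r_0$) must be extracted from its proof, which is the same level of rigour as the paper. Second, your stochastic domination of $\widetilde H_{m_N}$ by a Binomial invokes a bound $\pi_N(r)\le\overline q_N$ for \emph{all} $r\ge r_0$ via an asserted monotonicity of $\pi_N$ in $r$; this monotonicity is plausible (the offspring distribution of the approximating Galton--Watson process is stochastically decreasing in $r$) but is nowhere proved in the paper and is not an immediate consequence of Theorem~\ref{thm:fixation}. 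The cleaner fix is to work with the process stopped at $L_N:=\inf\{i:\widetilde H_i\ge L\}$, so that only rates in $[r_0,r_0+L\varrho_N]$ ever occur and the local uniformity suffices both for the domination and for the coupling; with that modification the argument closes without any global monotonicity claim.
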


 \begin{figure}[ht]
\centering
\includegraphics[height=6.5cm]{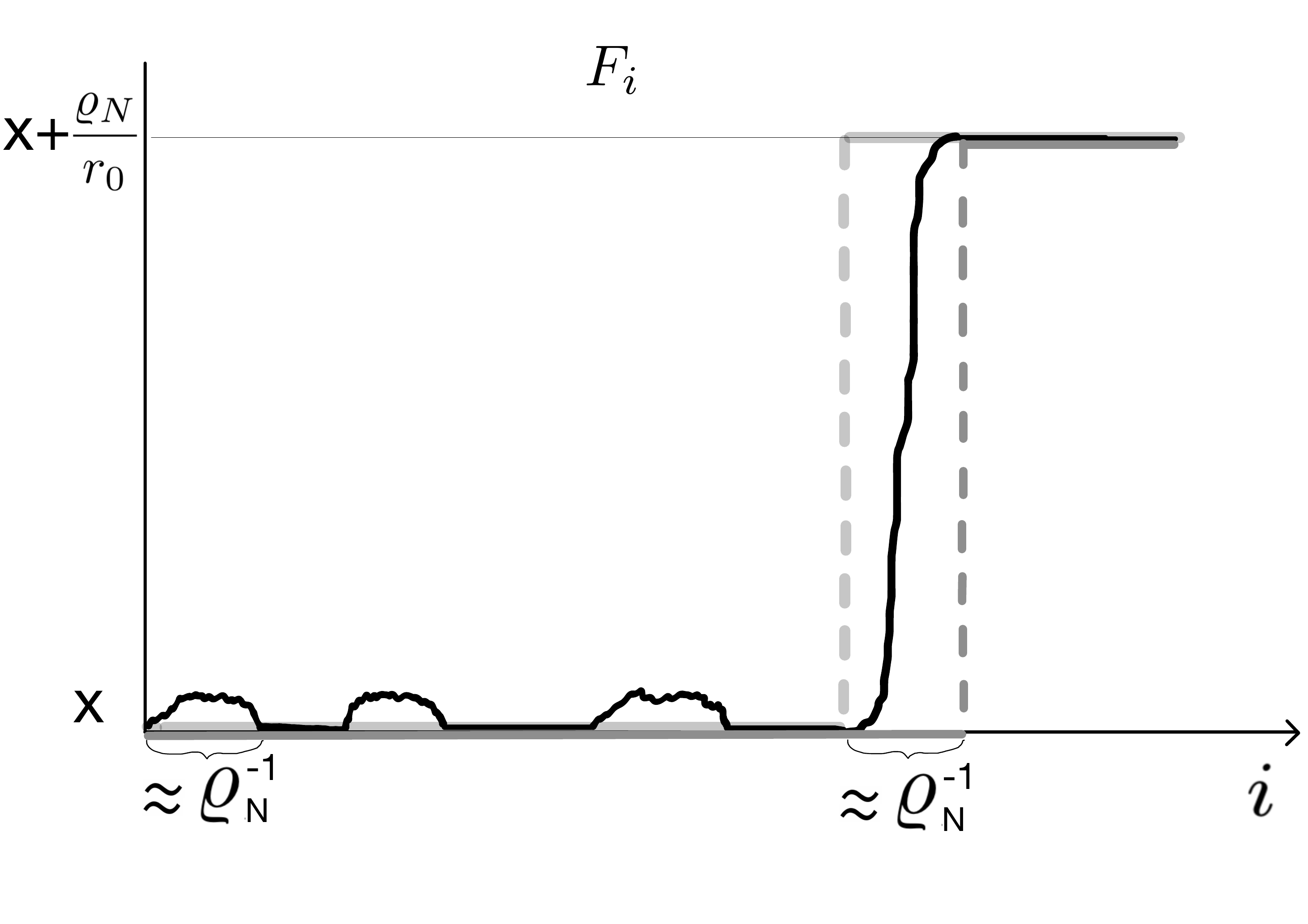}
\vspace{-0.5cm}
 \caption{\small The fitness process $F_i$ (solid black line), started at fitness $x,$ depicted until the time of fixation of the next successful mutation, in the absence of clonal interference. The light grey line represents the approximation $\Phi_i$ defined in (\ref{eq:rel-fit-mut}).}
 \label{fig:reprateprocesses}
\end{figure}

\subsubsection{Genetic and adaptive evolution on a long time scale}\label{sect:long}
Our next goal is to investigate the process describing the fitness of the evolved population relative to the ancestral population at day $0$. Let $R_{i,j},$ for $i\in\N_0$ and $1\leq j\leq N$, denote the reproduction rate of individual $j$ at the beginning of day $i.$ Assume that at day 0 every individual has reproduction rate $r_0,$ that is, $R_{0,j}=r_0$ for all $j=1,...,N.$ Recall from \eqref{eq2} the definition of the \emph{relative fitness} at day $i$ with respect to day 0. We can connect the relative fitness with the number of successful mutations in the following way. Let $\underline{R}_i:=\min_{1\leq j\leq N}R_{i,j}$ and $\overline{R}_i:=\max_{1\leq j\leq N}R_{i,j}$ denote the minimal and maximal reproduction rate at day $i,$ respectively. Then we have
\[\frac{\underline{R}_i}{r_0}\leq F_i\leq\frac{\overline{R_i}}{r_0}, \quad i\in\N_0.\]
Moreover, on the event that there is no clonal interference up to day $i,$ one has
\[r_0+\varrho_N(H_i-1)\leq \underline{R}_i\leq \overline{R}_i\leq r_0+\varrho_n H_i.\]

Let 
\begin{equation}\label{eq:rel-fit-mut}
\Phi_i:=1+\frac{\varrho_N}{r_0}H_i.
\end{equation}
Thus on the event that there is no clonal interference we have
\be \label{eq:FPhi}\Phi_i-\frac{\varrho_N}{r_0}\leq F_i\leq \Phi_i.\ee

From Theorem \ref{thm:mutations} we see that the relevant time scale for the successful mutations is given by $\mu_N^{-1}\varrho_N^{-1}.$ Since the selective advantage of a single mutation is of order $\varrho_N$ (cf. Proposition \ref{prop:selective_advantage}), in view of \eqref{eq:rel-fit-mut} it seems plausible that the time scale on which to expect a non-trivial limit of the fitness process is $\varrho_N^{-2}\mu_N^{-1}.$ This suggests that the relative fitness has to be considered on a  time scale different from that of the number of successful mutations.

Indeed our next theorem shows that the process $F:=(F_{\lfloor \mu_N^{-1}\varrho_N^{-2}t\rfloor})_{t\geq 0}$ has a non-trivial scaling limit, which turns out to be a deterministic parabola.
 
\begin{theorem}[Convergence of the relative fitness process]\label{thm:adapt} Assume $R_{0,j}=r_0$ for $j=1,...,N$, and let $(F_i)_{i\in\N_0}$ be the process of relative fitness. Then under Assumption A, the sequence of processes 
$(F_{\lfloor (\varrho_N^2\mu_N)^{-1}t\rfloor})_{t\geq 0}$ converges in distribution as $N\to\infty$ locally uniformly to the deterministic function
\[
f(t)=\sqrt{1+\frac{2C(\gamma)t}{r_0^2}}, \,\,t\geq 0\, .
\]

\end{theorem}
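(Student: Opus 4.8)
The plan is to push everything through the process $(H_i)$ counting successful mutations, recognise a deterministic ODE in the limit, and solve it. Fix $T>0$ and a small $\delta>0$ as in Theorem~\ref{thm:fixation} and Proposition~\ref{noclonalint}.

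\textbf{Step 1: reduction to $H$.} By Proposition~\ref{noclonalint}, with probability tending to $1$ there is no clonal interference up to day $\lfloor(\varrho_N^2\mu_N)^{-1}T\rfloor$, and on that event the two-sided bound \eqref{eq:FPhi} gives $\Phi_i-\varrho_N/r_0\le F_i\le \Phi_i$ with $\Phi_i=1+(\varrho_N/r_0)H_i$ simultaneously for $0\le i\le\lfloor(\varrho_N^2\mu_N)^{-1}T\rfloor$. Since $\varrho_N\to 0$, it suffices to prove that $\varrho_N H_{\lfloor(\varrho_N^2\mu_N)^{-1}t\rfloor}$ converges in probability, locally uniformly in $t$, to
\[
h(t):=\sqrt{r_0^2+2C(\gamma)t}-r_0 ,
\]
because $1+h(t)/r_0=f(t)$. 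Note that $h$ solves the ODE $h'(t)=C(\gamma)/(r_0+h(t))$, $h(0)=0$; the heuristic is that in $t$-time the quantity $\varrho_N H$ increases at rate (per-day fixation probability)$\times\varrho_N\times$(days per unit $t$) $=\frac{\mu_N\varrho_N C(\gamma)}{r_0+\varrho_N H}\cdot\varrho_N\cdot(\varrho_N^2\mu_N)^{-1}=\frac{C(\gamma)}{r_0+\varrho_N H}$, using that after $n$ successful mutations the homogeneous reproduction rate is $r_0+n\varrho_N$ and the fixation probability at rate $r$ is $\sim\varrho_N C(\gamma)/r$ by Theorem~\ref{thm:fixation}.

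\textbf{Step 2: the fixation-time chain.} Let $0=T_0<T_1<T_2<\cdots$ be the days at which the successive successful mutations reach fixation, so $H_i=\#\{n\ge 1: T_n\le i\}$. On the no-clonal-interference event, between day $T_n$ and the initiation of the $(n{+}1)$-st successful mutation the population is homogeneous at rate $r_0+n\varrho_N$; by Theorem~\ref{thm:fixation} each day independently carries a successful mutation with probability $\sim\mu_N\varrho_N C(\gamma)/(r_0+n\varrho_N)$, and by \eqref{timefix} any mutation fixates within $O(\varrho_N^{-1-3\delta})$ days, which under Assumption~A iii) is $o(\mu_N^{-1}\varrho_N^{-1})$. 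Hence the inter-fixation times $T_{n+1}-T_n$ are, up to asymptotically negligible corrections, independent geometric variables with mean $\bar\mu_n:=(r_0+n\varrho_N)/(\mu_N\varrho_N C(\gamma))$. (Equivalently, one may iterate Theorem~\ref{thm:mutations} over consecutive windows short enough that the homogeneous rate changes only by $o(1)$ but long enough for a law of large numbers — this is the route that most directly matches the ``Kurtz-type'' philosophy of the paper, and requires a version of Theorem~\ref{thm:mutations} uniform in the starting rate.)

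\textbf{Step 3: law of large numbers and inversion.} Put $\bar T_n:=\sum_{m=0}^{n-1}\bar\mu_m=\frac{1}{\mu_N\varrho_N C(\gamma)}\big(r_0 n+\tfrac12\varrho_N n(n-1)\big)$. A Kolmogorov maximal inequality applied to the (nearly independent) centred geometric summands shows $\max_{n\le c\varrho_N^{-1}}|T_n-\bar T_n|$ is of order $(\mu_N\varrho_N^{3/2})^{-1}$, which is of smaller order than $\bar T_{\lfloor c\varrho_N^{-1}\rfloor}=O((\mu_N\varrho_N^2)^{-1})$; thus $T_n=\bar T_n(1+o_{\mathbb P}(1))$ uniformly over $n$ in the relevant range. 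Inverting the monotone relation $H_i=\max\{n: T_n\le i\}$ with $i=\lfloor(\varrho_N^2\mu_N)^{-1}t\rfloor$, the value $n=H_i$ is to leading order the solution of $r_0 n+\tfrac12\varrho_N n^2=C(\gamma)t/\varrho_N$; writing $y=\varrho_N n$ this is $\tfrac12 y^2+r_0 y-C(\gamma)t=0$, i.e. $y=\sqrt{r_0^2+2C(\gamma)t}-r_0=h(t)$. Collecting the errors — the ``$\sim$'' in Theorem~\ref{thm:fixation}, the $O(\varrho_N^{-1-3\delta})$ fixation delays, the clonal-interference exclusion, and the law-of-large-numbers fluctuations — and using uniform continuity of $t\mapsto h(t)$ on $[0,T]$ upgrades this to locally uniform convergence in probability of $\varrho_N H_{\lfloor(\varrho_N^2\mu_N)^{-1}t\rfloor}$, hence of $F_{\lfloor(\varrho_N^2\mu_N)^{-1}t\rfloor}$, to $f$.

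\textbf{Main obstacle.} The delicate point is the ``restart'' argument underlying Step~2: Theorem~\ref{thm:fixation} (and Theorem~\ref{thm:mutations}) describe a single fresh mutation, respectively a bounded horizon, in a background of \emph{fixed} rate, whereas here one must invoke them of order $\varrho_N^{-1}\to\infty$ times, for rates $r_0+n\varrho_N$ sweeping the compact interval $[r_0,r_0 f(T)]$. One therefore needs these estimates \emph{uniformly} in the background rate, with error terms (including the probabilities that the near-critical Galton--Watson coupling behind Theorem~\ref{thm:fixation} fails, or that clonal interference occurs, in any one epoch) that can be summed over all of order $\varrho_N^{-1}$ epochs. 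A clean way to organise this is to establish the uniform estimates once and then run a discrete Grönwall comparison between the random path $t\mapsto\varrho_N H_{\lfloor(\varrho_N^2\mu_N)^{-1}t\rfloor}$ and the ODE solution $h$, absorbing all discrepancies into a remainder that vanishes in probability. By contrast, the fact that $F_i$ is defined via $\log\frac1N\sum_j e^{R_{i,j}u}$ rather than $\overline R_i/r_0$ is not a real difficulty, since \eqref{eq:FPhi} already bridges the two.
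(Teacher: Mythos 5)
Your proposal is correct in substance and in all its computations (the quadratic inversion $r_0 y+\tfrac12 y^2=C(\gamma)t$ does give $1+h(t)/r_0=f(t)$, and the fluctuation order $(\mu_N\varrho_N^{3/2})^{-1}$ versus $(\mu_N\varrho_N^{2})^{-1}$ is right), but it takes a genuinely different route from the paper. You reduce to $H$ via \eqref{eq:FPhi} exactly as the paper does, but then analyse the point process of fixation times $T_n$, prove a law of large numbers $T_n=\bar T_n(1+o_{\P}(1))$ uniformly over $n\le c\varrho_N^{-1}$ via a maximal inequality, and invert. The paper instead works with the killed chain $\tilde H_i=H_i\1_{D_i}-\infty\1_{D_i^c}$ (equivalently $\tilde\Phi_i=1+\tfrac{\varrho_N}{r_0}\tilde H_i$), writes down its one-step transition probabilities, computes the rescaled discrete generator on the time scale $i=\lfloor\varrho_N^{-2}\mu_N^{-1}t\rfloor$, shows it converges to $Ag(x)=\tfrac{C(\gamma)}{r_0^2x}g'(x)$, and invokes Theorems 1.6.5 and 4.2.6 of Ethier--Kurtz to get convergence to the flow of $\dot h=C(\gamma)/(r_0^2h)$, $h(0)=1$ --- i.e.\ the route you only mention parenthetically in Step 2. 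What each buys: the generator route outsources the uniform-in-time control and the tightness to the Ethier--Kurtz machinery and handles the killing at the first clonal interference very cleanly (the killing term appears as an extra summand in $A_N$ that vanishes by \eqref{eq:noclon_day} and Assumption A iii)); your renewal route is more hands-on and yields an explicit rate of concentration, at the price of having to manage the $-\infty$ state and the fixation-delay corrections by hand. The obstacle you flag --- uniformity of Theorem \ref{thm:fixation} in the background rate $r\in[r_0,r_0f(T)]$ and summability of the per-epoch error probabilities over $O(\varrho_N^{-1})$ epochs --- is real but is equally present in the paper's argument (its transition probabilities $\tfrac{C(\gamma)\mu_N\varrho_N}{r_0+n\varrho_N}\P(D_{i+1}\,|\,D_i)$ are themselves only asymptotic statements that must hold uniformly in $n$ for the generator convergence to be uniform on the state space); the summation over epochs is precisely what Assumption A iii) ($a>3b$) and the iteration in the proof of Proposition \ref{noclonalint} are designed to control, so your Gr\"onwall-type patch is a legitimate way to close that gap and is no worse than what the paper itself supplies.
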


The proof of this theorem will be given in Section \ref{proof:adapt}. It relies on the fact that due to Proposition \ref{noclonalint} the relative fitness process $(F_i)_{i\in\N_0}$ can be approximated by the process $(\Phi_i)_{i\in\N_0}$ defined in \eqref{eq:rel-fit-mut}.

A similar result can be obtained if a beneficial mutation provides an advantage that depends on the current fitness level. For example, let us assume that a mutation that goes to fixation when the relative fitness is $x$, provides an increment  to the reprodution rate that is of the form
\begin{equation}\label{epistasis}
\varrho_N^{(x)}=\psi(x)\varrho_N
\end{equation}
for some continuous function $\psi: [1,\infty) \rightarrow \mathbb{R}^+$. As we will see in the next corollary, a special choice of $\psi$ as a power function leads to a fitness curver similar 
to \eqref{wiser}.

\begin{corollary}\label{Epistasis}
Under Assumption A and (\ref{epistasis}), let $F^\psi_{i}$ be the relative fitness of the population at day $i$ with respect to the ancestral population at time $0$. Then the process 
$(F^\psi_{\lfloor (\varrho_N^2\mu_N)^{-1}t\rfloor})_{t\geq 0}$ converges in distribution and locally uniformly as $N\to\infty$ to the deterministic function $h$ which is the solution of the differential equation
\[
\dot{h}(t)=\frac{\psi(h(t))^2C(\gamma)}{r_0^2h(t)},\,h(0)=1,\,t\geq 0.
\]
In particular, if $\psi(x)=x^{-q}$ for some $q>-1,$ then 
\begin{equation} \label{fitness-epistasis}h(t)=\Big(1+\frac{2(1+q)C(\gamma)}{r_0^2}t\Big)^{\frac{1}{2(1+q)}}, t\geq 0.\end{equation}
\end{corollary}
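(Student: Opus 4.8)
The plan is to treat Corollary~\ref{Epistasis} as a variant of Theorem~\ref{thm:adapt} in which the fixed selective-advantage parameter $\varrho_N$ is replaced by the state-dependent increment $\varrho_N^{(x)}=\psi(x)\varrho_N$, and to transport the whole machinery (fixation probability of Theorem~\ref{thm:fixation}, exclusion of clonal interference of Proposition~\ref{noclonalint}, the Poissonian arrival of successful mutations of Theorem~\ref{thm:mutations}) through this substitution. The first step is to observe that, since $\psi$ is continuous and strictly positive on $[1,\infty)$, on any bounded fitness window $[1,L]$ the increment $\psi(x)\varrho_N$ is bounded above and below by constant multiples of $\varrho_N$, so it still satisfies the order estimate $\varrho_N^{(x)}\sim c\,N^{-b}$ uniformly in $x\in[1,L]$. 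Hence Assumption~A iii) continues to hold (with the same $a,b$) along the trajectory, and all the probabilistic estimates of Section~3 apply with $\varrho_N$ replaced by $\psi(x)\varrho_N$, the only change being that the fixation probability of a mutation arising when the relative fitness is $x$ becomes, by Theorem~\ref{thm:fixation},
\[
\pi_N^{(x)}\sim \psi(x)\varrho_N\,\frac{C(\gamma)}{r_0 x},
\]
since the current (homogeneous) reproduction rate is $r_0 x$ rather than $r_0$.

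Next I would set up the analogue of the approximating chain $\Phi_i$. Let $H_i^\psi$ count the successful mutations up to day $i$ and let $\Phi_i^\psi:=1+\frac{\varrho_N}{r_0}\sum_{\ell=1}^{H_i^\psi}\psi(\Phi^\psi_{j_\ell})$, where $j_\ell$ is the day the $\ell$-th successful mutation fixates; as in \eqref{eq:FPhi} one checks that on the event of no clonal interference $F_i^\psi$ and $\Phi_i^\psi$ differ by at most $O(\varrho_N)$ uniformly over the relevant time horizon. The key is then a generator/martingale-problem computation in the spirit of \cite{kurtz}: on the time scale $\lfloor(\varrho_N^2\mu_N)^{-1}t\rfloor$, in one macroscopic time unit the expected increment of $\Phi^\psi$ is, to leading order,
\[
\frac{\varrho_N}{r_0}\cdot\psi(\Phi^\psi)\cdot(\text{number of successful mutations per unit time})
=\frac{\varrho_N}{r_0}\,\psi(\Phi^\psi)\cdot\frac{C(\gamma)\,\mu_N\varrho_N\,\psi(\Phi^\psi)}{r_0\Phi^\psi}\cdot(\varrho_N^2\mu_N)^{-1}
=\frac{\psi(\Phi^\psi)^2 C(\gamma)}{r_0^2\,\Phi^\psi},
\]
while the quadratic variation of the increment is $O(\varrho_N)\to 0$. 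This identifies the limit as the deterministic flow
\[
\dot h(t)=\frac{\psi(h(t))^2 C(\gamma)}{r_0^2 h(t)},\qquad h(0)=1,
\]
and a standard tightness argument (the increments are uniformly bounded and the drift is locally Lipschitz in $h$ as long as $\psi$ is, or at least continuous so that the ODE has a unique solution by a Picard/Peano plus monotonicity argument — note the right-hand side is positive, so $h$ is increasing and stays in a region where $\psi$ is well-behaved) upgrades convergence of finite-dimensional distributions to locally uniform convergence in distribution. The closed form \eqref{fitness-epistasis} for $\psi(x)=x^{-q}$ then follows by separation of variables: $h\,h^{2q}\,\dot h=\frac{C(\gamma)}{r_0^2}$, i.e. $\frac{d}{dt}\frac{h^{2q+2}}{2q+2}=\frac{C(\gamma)}{r_0^2}$, giving $h(t)^{2(1+q)}=1+\frac{2(1+q)C(\gamma)}{r_0^2}t$.

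The main obstacle is not the ODE bookkeeping but making the reduction to Section~3 genuinely uniform: the fixation probability, the speed-of-fixation bound \eqref{timefix}, and the clonal-interference bound of Proposition~\ref{noclonalint} were all proved for a single fixed value of $\varrho_N$, whereas here the effective increment $\psi(x)\varrho_N$ varies with the (random, time-varying) current fitness level. One must therefore run the arguments of Section~3 on a random time-varying background rate $r_0 x$ with $x$ confined to a compact set, and check that all the $N$-asymptotics there hold uniformly in $x$ over that set; this is where one uses continuity and strict positivity of $\psi$ to get two-sided comparisons with the constant-$\varrho_N$ case. A secondary technical point is that $h$ could in principle reach the boundary of the domain of $\psi$ or grow without bound in finite time (e.g. for very negative $q$, though $q>-1$ prevents blow-up here); restricting attention, for the purpose of the local-uniform statement, to a finite time horizon $[0,T]$ and the corresponding compact fitness window $[1,h(T)]$ (plus a small buffer) handles this cleanly, after which one lets $T\to\infty$.
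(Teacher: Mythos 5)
Your proposal is correct and follows essentially the same route as the paper: replace the chain $\tilde\Phi$ of Theorem \ref{thm:adapt} by a version whose jump size is $\psi(x)\varrho_N/r_0$ and whose per-day success probability is $C(\gamma)\mu_N\varrho_N\psi(x)/(xr_0)$, compute the rescaled discrete generator to obtain the drift $\psi(x)^2C(\gamma)/(r_0^2x)$, invoke the Ethier--Kurtz convergence theorem, and solve the ODE by separation of variables for $\psi(x)=x^{-q}$. Your explicit attention to the uniformity of the Section~3 estimates over a compact fitness window is a point the paper leaves implicit, but it is a refinement of the same argument rather than a different one.
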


This is similar to the family of curves found in \cite{WRL}, see also the discussion in Section \ref{sect:epistasis}.

\section{Proof of the main results}\label{proofsec}
In this section, we provide the proofs of the results that we stated in Section \ref{sec:modelandresults}, in particular Theorem \ref{thm:fixation}, which is technically the most involved and requires several preparatory steps, which are carried out first. After these preparations, the proof of Theorem \ref{thm:fixation} will be carried out in Section \ref{subsect:proof}. The proofs of the other main results will be given in Sections \ref{subsect:proofnoclonal} through \ref{proof:adapt}.

It turns out that if the number of mutants reaches at least $\varepsilon N,$ for some $\varepsilon \in (0,1),$ then the mutation will fixate with probability tending to one as $N\to \infty$. Our strategy for proving Theorem \ref{thm:fixation} is thus  to divide the time between the occurrence of a mutation and its eventual fixation into three stages. For the case of a successful mutation this is depicted in Figure \ref{phases}.

\begin{figure}[ht]
    \centering
 \includegraphics[width=7cm]{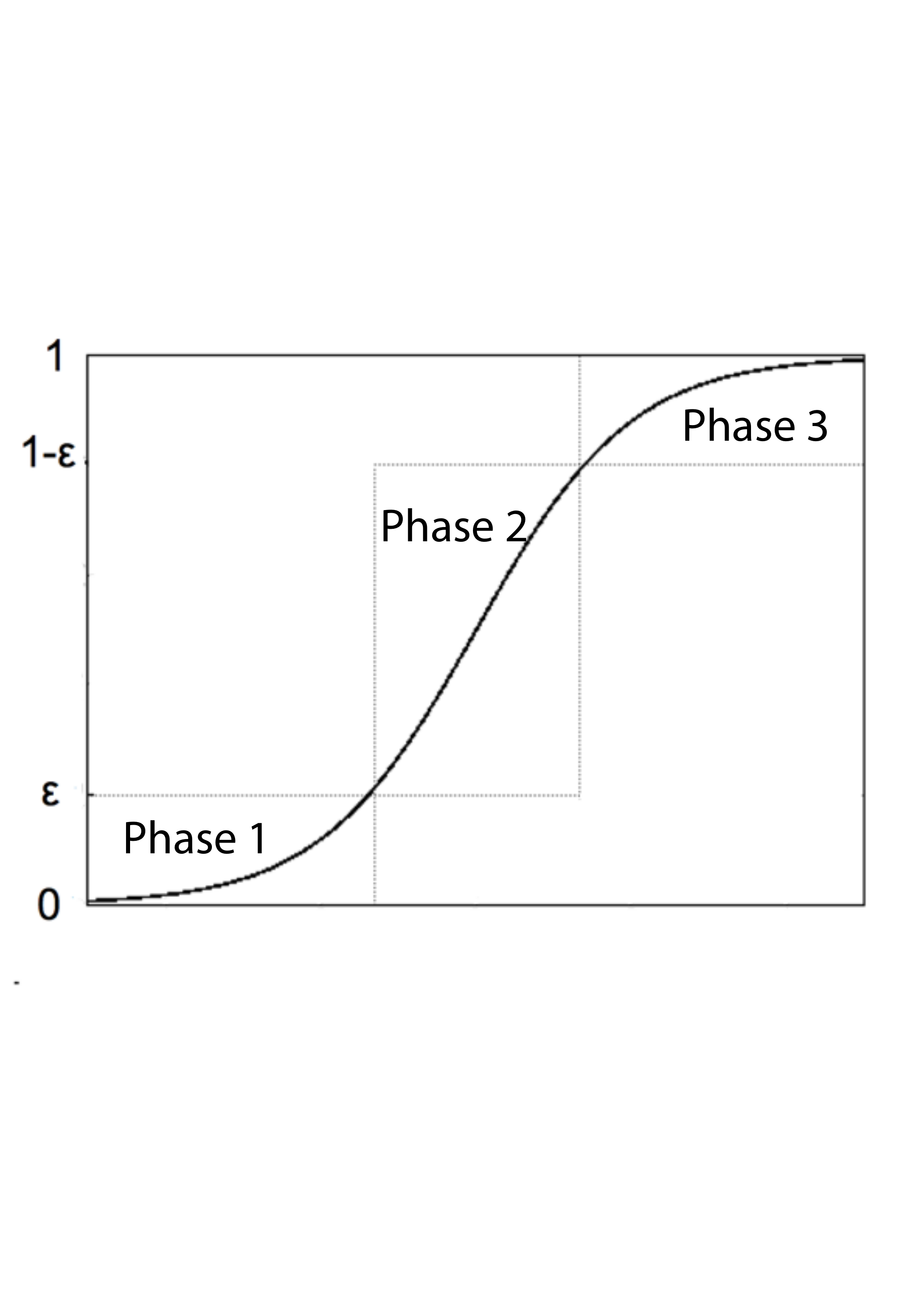}  
   \vspace{-2.5cm}
      \caption{\small A sketch of the  frequency of mutants during a selective sweep going to fixation. One distinguishes 3 parts: when the number of mutants is at most $\varepsilon N$ (phase 1),  when the number of wild type individuals is at most $\varepsilon N$ (phase~3) and  the intermediate stage (phase~2). This subdivision of a selective sweep into three phases is a now classical approach, see for example \cite{Kaplan, DurrettSchweinsberg04, Champagnat06}.
      }
      \label{phases}
\end{figure}
The first stage starts at the day of the mutation, and ends at the first day $i\in\N$ that the number $K_i$ of mutants has reached a level $\varepsilon N,$ for some $\varepsilon \in(0,1/2).$ The second stage starts upon reaching $\varepsilon N,$ and ends when the process $(K_i)_{i\in\N_0}$ reaches $(1-\varepsilon)N$. The last stage is between $(1-\varepsilon)N$ and $N.$ We will use different methods to analyze the behaviour of the process during these three stages. The first stage is the most difficult to deal with, and we use a coupling to suitable Galton-Watson processes to show that the probability that $(K_i)_{i\in\N_0}$ with $K_0=1$ ever reaches $\varepsilon N$ is approximated by \eqref{probfix}. The second stage can be treated by a simple ODE approximation, from which one sees that if $K_i\geq \varepsilon N$ at some time $i,$ then with probability tending to 1 (as $N\to \infty$) the process will eventually reach level $(1-\varepsilon)N.$ The third stage will be dealt with in a manner that has some 
similarities to the first stage, observing that starting from at least $(1-\varepsilon)N$ mutants, there is always a positive probability to reach fixation in the next step. Moreover, our methods of proof will also show that with high probability each of these stages will not last longer than $\varrho_N^{-1-\delta},$ for any $\delta>0.$
\\
To be more specific, fix $0<\varepsilon<1/2$. Assume $K_0=1.$ Let 
\[T^N_{1}:=\inf\{i: K_i\geq \varepsilon N\},\] 
and 
\[T^N_{2}:=\inf\{i: K_i\geq (1-\varepsilon) N\}.\]
Then we can write $\tau_{\rm fix}^N$ as the sum 
\begin{equation}\label{3-decom}\tau_{\rm fix}^N=T^N_1+(T^N_2-T^N_1)+(\tau_{\rm fix}^N-T^N_2).\end{equation}
The important intermediate steps of the proof, dealing with $T^N_1,$ $(T^N_2-T^N_1),$ and $(\tau_{\rm fix}^N-T^N_2)$, respectively, are given below in Sections \ref{subsect:firststage}, \ref{subsect:secondstage}, and \ref{subsect:thirsdstage}, after some preparatory steps in Sections \ref{subsect:GW-constr} through \ref{sect:asymGW}. The proof of Theorem \ref{thm:fixation} is completed in Section \ref{subsect:proof}.\\
\paragraph*{{\bf Assumption and notation.}}
Throughout all of Section \ref{proofsec} we fix $r>0, \gamma>1,$ and work under the assumption \eqref{sNpolynom}, fixing $b\in(0,1/2)$ accordingly. We a priori assume $\varepsilon\in (0,1/2),$ but note that in some places we will impose further conditions.
Unless stated otherwise, $\P_k, \E_k,$ and $\var_k, k\in\N,$ refer to the law, expectation and variance of $(K_i)_{i\in\N_0},$ started at $K_0=k,$ or any random variables defined on the same probability space. We use $c, c', \tilde{c},...$ to denote generic constants which are independent of $N,$ with possibly different values at different occurrences.

\subsection{A simplified sampling and construction of the auxiliary Galton-Watson processes}\label{subsect:GW-constr}
The construction of our model (as explained in Section \ref{selection}) was such that $K_{i+1}$ was obtained from $K_i$ by letting two independent Yule populations with initial sizes $K_i$ and $N-K_i$ and respective growth rates $r+\varrho_N$ and $r$ evolve until  time $\sigma_{K_i}$ (defined in \eqref{sigmak}) and then sampling uniformly $N$ individuals from the total of those two populations, which amounts to a mixed hypergeometric sampling of the number of individuals (Proposition \ref{def:selection}). 
In order to simplify the picture, we would like to use binomial rather than hypergeometric sampling, i.e. sampling individuals independently of each other with equal probability. 
In this way we will manage to construct two Galton-Watson processes $(\underline{K}_i)_{i\in\N_0}$ and $(\overline{K}_i)_{i\in\N_0}$ that will serve as upper and lower bounds for our true process $(K_i)_{i\in\N_0}$ in the first stage of the sweep. We prepare this construction by first giving an alternative description for the sampling of mutants.\\

Consider the population at the end of a given day (day 0, say). Assume $K_0=k,$ hence by construction at the end of day $0$ there are $M_{\sigma_k}$ mutant individuals for which we want to determine whether or not they will be sampled for the next day. (Recall the definition of $M_t$ from \eqref{defY}.) Label these mutant individuals with numbers $1,\dots, M_{\sigma_k}$.  Define
\[X_j:=1_{\{\mbox{\small individual $j$ is selected}\}},\quad  j=1,...,M_{\sigma_k}.\] 
Define a random variable 
\begin{equation}\label{defGamma}
\Gamma:=\frac{Y_{\sigma_k}}{N}.
\end{equation}
Thus $\Gamma$ is the ratio between the number of individuals at the end of day 0 and the number of individuals at the beginning of day 1, and by \eqref{sigmak}, $\E[\Gamma]=\gamma.$ Moreover, $\Gamma\geq 1,$ and $\P(\Gamma>1)$ is exponentially close to 1 as $N \to \infty$.
Conditional on $\Gamma,$ for every $j=1,...,M_{\sigma_k},$
\[\P(X_j=1)=\frac{1}{\Gamma},\] 
but due to our sampling mechanism, the $X_j, j=1,...,M_{\sigma_k}$, are not independent. Their joint law conditional on $\Gamma$ and $M_{\sigma_k}$ can be described as follows. Let $(U_j)_{j\in \N}$ be i.i.d uniform random variables on $[0,1]$. Let $\tilde{X}_1:=1_{\{U_1<1/\Gamma\}},$ and define recursively for $j\geq 2$
\be \label{def:Xtilde}\tilde{X}_j:=1_{\big\{U_j<\frac{N-\sum_{l=1}^{j-1}\tilde{X}_l}{\Gamma N-(j-1)}\big\}}.\ee
For later convenience we define $U_j$ and $\tilde{X}_j$ for $j\in\N,$ even though $X_j$ is defined only for $j=1,...,M_{\sigma_k}.$
\begin{lemma}\label{lem:coupling-success}
Conditional on $\Gamma,$  $(\tilde{X}_j)_{j=1,...,M_{\sigma_k}}$ is equal in distribution to $(X_j)_{j=1,...,M_{\sigma_k}}$.
\end{lemma}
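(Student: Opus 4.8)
The plan is to identify $(X_j)_{j=1,\dots,M_{\sigma_k}}$ as the restriction, to the mutant labels, of the indicator vector of a uniformly chosen $N$-element subset of the $Y_{\sigma_k}$ individuals present at the end of day~$0$, and to match it with $(\tilde X_j)$ via the standard sequential (coordinatewise) description of sampling without replacement. Throughout I would work conditionally on $\Gamma$ \emph{and} $M_{\sigma_k}$, and only at the very end average over the conditional law of $M_{\sigma_k}$ given $\Gamma$.

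First I would label the $Y_{\sigma_k}=\Gamma N$ individuals present at the end of day~$0$ so that the $M_{\sigma_k}$ mutants carry the labels $1,\dots,M_{\sigma_k}$ and the wild-type individuals the labels $M_{\sigma_k}+1,\dots,\Gamma N$. By Definition~\ref{def:sampling} the sampled set is a uniform $N$-subset of $\{1,\dots,\Gamma N\}$, so, extending the notation $X_j:=1_{\{\text{individual }j\text{ is sampled}\}}$ to \emph{all} labels $j\le \Gamma N$, the vector $(X_1,\dots,X_{\Gamma N})$ is uniformly distributed over the binary strings of length $\Gamma N$ with exactly $N$ ones. A direct count of the strings compatible with a prescribed prefix (equivalently, exchangeability of sampling without replacement) then gives, for every $j\le\Gamma N$,
\[
\P\bigl(X_j=1 \mid X_1,\dots,X_{j-1},\ \Gamma,\ M_{\sigma_k}\bigr)=\frac{N-\sum_{l=1}^{j-1}X_l}{\Gamma N-(j-1)} ,
\]
and restricting to $j\in\{1,\dots,M_{\sigma_k}\}$ this is exactly the recursion \eqref{def:Xtilde} defining $(\tilde X_j)$ (note $N/(\Gamma N)=1/\Gamma$ covers the case $j=1$).

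It then remains to check that $(\tilde X_j)$ genuinely realizes this transition rule. Since $U_j$ is uniform on $[0,1]$ and independent of $U_1,\dots,U_{j-1}$, hence of $\tilde X_1,\dots,\tilde X_{j-1}$, we have $\P(\tilde X_j=1\mid \tilde X_1,\dots,\tilde X_{j-1},\Gamma,M_{\sigma_k})=(N-\sum_{l=1}^{j-1}\tilde X_l)/(\Gamma N-(j-1))$ provided that quantity lies in $[0,1]$; this well-definedness is the only point requiring care, and it follows by an easy induction on $j$ for $1\le j\le M_{\sigma_k}$ (where $\Gamma N-(j-1)\ge 1$): the numerator stays $\ge0$ because once it hits $0$ the ratio is $0$ and all later $\tilde X$ vanish, and it never exceeds the denominator because whenever they would become equal the preceding step forces the corresponding $\tilde X$ to be $1$ almost surely. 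Consequently, conditionally on $(\Gamma,M_{\sigma_k})$, the sequences $(X_j)_{j\le M_{\sigma_k}}$ and $(\tilde X_j)_{j\le M_{\sigma_k}}$ have the same degenerate law at $j=1$ and the same one-step conditional law at each $j\le M_{\sigma_k}$, so by induction on $j$ their joint conditional laws agree; integrating over the conditional law of $M_{\sigma_k}$ given $\Gamma$ (identical on both sides, as $(U_j)_{j\in\N}$ is independent of $(M_t,Z_t)_{t\ge0}$) yields the claim. I do not expect a real obstacle: the lemma is in essence the remark that uniform sampling without replacement has a sequential coordinatewise representation, and the only genuinely technical nuance is confirming that the recursively defined success probabilities stay in $[0,1]$.
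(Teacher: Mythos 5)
Your proposal is correct and takes essentially the same route as the paper's proof: both identify the uniform $N$-sample as sampling without replacement and check that the sequential conditional probabilities $\P\bigl(X_j=1\mid X_1,\dots,X_{j-1}\bigr)=\bigl(N-\sum_{l=1}^{j-1}X_l\bigr)/\bigl(\Gamma N-(j-1)\bigr)$ coincide with the recursion \eqref{def:Xtilde} defining $\tilde X_j$. The extra points you verify (conditioning also on $M_{\sigma_k}$, and the success probabilities remaining in $[0,1]$) are details the paper leaves implicit.
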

\begin{proof}
Conditional on $\Gamma,$ we can represent the sampling procedure as follows: Individual 1 has probability $1/\Gamma$ of being selected. For individual 2, the probability of being sampled depends on whether or not individual 1 was selected, in fact
\begin{equation}
\P(X_2=1)=\frac{N-1}{\Gamma N-1}\P(X_1=1)+\frac{N}{\Gamma N-1}\P(X_1=0),
\end{equation}
or equivalently
\begin{equation}
\P(X_2=1|X_1)=\frac{N-X_1}{\Gamma N-1}.
\end{equation}
Proceeding thus recursively, we find that the probability that the $j$th individual is selected, conditional on knowing $X_1,...,X_{j-1}$, is
\begin{equation}
\P(X_j=1|X_1,...,X_{j-1})=\frac{N-\sum_{l=1}^{j-1}X_l}{\Gamma N-(j-1)}=\P(\tilde{X}_j=1|\tilde{X}_1,...,\tilde{X}_{j-1}).
\end{equation} 
This completes the proof.
\end{proof}

We can now construct the auxiliary Galton-Watson processes.
Fix $\alpha>0.$ We are going to specify a joint transition mechanism for $(K_i)_{i\in\N_0}$ and the auxiliary processes $(\underline{K}_i)_{i\in\N_0}$ and $(\overline{K}_i)_{i\in\N_0}$. To this purpose, let $\underline k, k, \overline k$ be natural numbers.
Grow independent Yule trees at rate $r+\varrho_N$ up to time $\sigma_0$, and number these trees by $\ell =1,2\ldots$. 
 Number all the individuals in this forest at time $\sigma_0$ by $j=1,2,\ldots$ and denote the $j$-th individual by $\mathcal I_j$.
Let $(U_j)_{j\in\N}$ be a sequence of independent uniformly on $[0,1]$ distributed random variables, independent of the Yule processes. For $j\in\N$ define 
\[\overline{X}_j:=1_{\{U_j<1/\gamma+N^{-\alpha}\}}\]
and
\[\underline{X}_j:=1_{\{U_j<1/\gamma-N^{-\alpha}\}}.\]
Also, define  $\Gamma$ as in \eqref{defGamma}, and $\tilde{X}_j$ by \eqref{def:Xtilde}.
We put
\be \label{def:underK}\begin{split} \underline{L}:=|\{j: \mathcal I_j \mbox{ belongs to the first } \underline k \mbox{ trees and is born before time } \sigma_{\lceil \varepsilon N \rceil}, \mbox{ and } \underline{X}_j=1\}|,\\ L:=|\{j: \mathcal I_j \mbox{ belongs to the first } k \mbox{ trees and is born before time } \sigma_k, \mbox{ and } \tilde{X}_j=1\}|\\ \overline{L}:=|\{j:  \mathcal I_j \mbox{ belongs to the first } \overline k \mbox{ trees and is born before time } \sigma_{0}, \mbox{ and } \overline{X}_j=1\}|.\end{split}\ee
Let
\be J:=\inf\Big\{j:\frac{N-\sum_{l=1}^{j-1}\tilde{X}_l}{\Gamma N-(j-1)}\in \mathbb{R}\setminus \big[\frac{1}{\gamma}-N^{-\alpha},\frac{1}{\gamma}+N^{-\alpha} \big]\Big\}.\ee
By construction it is clear that for every $j\leq J$ 
\begin{equation}\label{coup}
\sum_{l=1}^j\underline{X}_l\leq \sum_{l=1}^j \tilde{X}_l\leq \sum_{l=1}^j\overline{X}_l.
\end{equation}
Thus if $\underline{k}\leq k\leq \overline{k},$ on the event $\{J\geq M_{\sigma_k}\}$ we have
\[\underline{L}\leq L\leq \overline{L}.\]
\begin{definition}\label{KK} Let  $(\underline{K}_i, K_i, \overline{K}_i)_{i\in\N_0}$ be a Markov chain whose transition probability from $(\underline k, k,\overline k)$ is the joint distribution of $(\underline{L},L,\overline{L})$ given by \eqref{def:underK}.
\end{definition}
By construction, the coordinate processes $(\underline{K}_i)_{i\in\N_0}, (K_i)_{i\in\N_0}$ and $(\overline{K}_i)_{i\in\N_0}$ are also Markov chains. We note in particular that because of Lemma \ref{lem:coupling-success} the dynamics of $(K_i)_{i\in\N_0}$ is the same as that described in Proposition \ref{def:selection}. \\

We will show in the next section that if $\alpha\in (b,1/2),$ then $\P(J>M_{\sigma_k})$ is exponentially close to one for any $k\leq \varepsilon N$. From this we will deduce that with high probability, for $\underline{K}_0=K_0=\overline{K_0}=1,$ we have
\be \label{eq:monotone}\underline{K}_i\leq K_i\leq \overline{K}_i\quad \forall i\leq T_1^N. \ee}
Note that by definition
\be \label{eq:monotone-2}\underline{K}_i\leq \overline{K}_i,\quad \forall i\in\N_0\ee
always holds.
The following characterization of $(\underline{K}_i)_{i\in\N_0}$ and $(\overline{K}_i)_{i\in\N_0}$ is immediate from the construction:
\begin{proposition} \label{def:GW-processes}
Let $\alpha>0$ as before, and $(\underline{K}_i, K_i, \overline{K}_i)_{i\in\N_0}$ as in Definition \ref{KK}. Then $(\overline{K}_i)_{i\in\N_0}$ is a Galton-Watson process whose offspring distribution is mixed binomial with parameters $\overline{M}$ and $\frac{1}{\gamma}+N^{-\alpha},$ where $\overline{M}$ is geometric with parameter $e^{-(r+\varrho_N)\sigma_0}.$ Similarly, $(\underline{K}_i)_{i\in\N_0}$ is a Galton-Watson process whose offspring distribution is mixed binomial with parameters $\underline{M}$ and $\frac{1}{\gamma}-N^{-\alpha},$ where $\underline{M}$ is geometric with parameter $e^{-(r+\varrho_N)\sigma_{\lceil \varepsilon N \rceil}}.$
\end{proposition}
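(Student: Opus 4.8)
The plan is to read the claimed branching structure straight off the coupled construction in Definition~\ref{KK}. The key observation is that, among the three coordinates produced by the transition rule \eqref{def:underK}, only the middle one $L$ carries state-dependence (through the truncation time $\sigma_k$); the times $\sigma_0$ and $\sigma_{\lceil\varepsilon N\rceil}$ entering $\overline L$ and $\underline L$ are fixed, and the sampling indicators $\overline X_j,\underline X_j$ are built from an i.i.d.\ uniform sequence that does not depend on the current state. This is exactly the structure of a Galton-Watson transition, and the whole proof is a bookkeeping argument, which is why the paper calls it immediate.

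First I would fix a generation $i$ and condition on $\overline K_i=\overline k$. By \eqref{def:underK}, $\overline K_{i+1}=\overline L$ is the number of individuals $\mathcal I_j$ lying in the first $\overline k$ Yule trees (grown at rate $r+\varrho_N$ up to time $\sigma_0$) with $\overline X_j=1$. Split this as $\overline L=\sum_{\ell=1}^{\overline k}\overline L^{(\ell)}$, where $\overline L^{(\ell)}$ counts the sampled individuals of tree~$\ell$. The trees are independent copies of a Yule process run for time $\sigma_0$, so by the properties of Yule processes recalled in Appendix~\ref{app:yule} (in particular Lemma~\ref{xgeo}) the size $\overline M^{(\ell)}$ of tree~$\ell$ is geometric with parameter $e^{-(r+\varrho_N)\sigma_0}$; moreover distinct trees carry disjoint blocks of labels $j$, and the $U_j$ are i.i.d.\ uniform and independent of the forest. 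Hence $\overline L^{(1)},\dots,\overline L^{(\overline k)}$ are i.i.d., and, conditionally on $\overline M^{(\ell)}$, the quantity $\overline L^{(\ell)}$ is a sum of $\overline M^{(\ell)}$ independent indicators $1_{\{U_j<1/\gamma+N^{-\alpha}\}}$, i.e.\ $\mathrm{Binomial}(\overline M^{(\ell)},1/\gamma+N^{-\alpha})$. Thus each $\overline L^{(\ell)}$ has precisely the mixed-binomial law of the statement. Since this conditional description is the same for every $i$ and, given $\overline K_i$, is independent of $\overline K_0,\dots,\overline K_{i-1}$, the chain $(\overline K_i)_{i\in\N_0}$ is a time-homogeneous Galton-Watson process with the asserted offspring distribution.

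The argument for $(\underline K_i)_{i\in\N_0}$ is verbatim the same, with the two modifications prescribed by \eqref{def:underK}: individuals are counted only if born before the fixed time $\sigma_{\lceil\varepsilon N\rceil}$ --- so a single tree contributes a population of size geometric with parameter $e^{-(r+\varrho_N)\sigma_{\lceil\varepsilon N\rceil}}$ --- and each such individual is kept with probability $1/\gamma-N^{-\alpha}$ via $1_{\{U_j<1/\gamma-N^{-\alpha}\}}$. The same i.i.d.-summands reasoning identifies the offspring law of $(\underline K_i)_{i\in\N_0}$ as mixed binomial with parameters $\underline M\sim\mathrm{Geom}(e^{-(r+\varrho_N)\sigma_{\lceil\varepsilon N\rceil}})$ and $1/\gamma-N^{-\alpha}$.

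I do not anticipate a genuine obstacle here. The only points deserving a sentence of care are (a) that the per-tree contributions $\overline L^{(\ell)}$ (resp.\ $\underline L^{(\ell)}$) are genuinely i.i.d., which rests on the disjointness of the label blocks together with the independence of the uniforms from the Yule forest; and (b) that the truncation times $\sigma_0$ and $\sigma_{\lceil\varepsilon N\rceil}$ are deterministic and do not move with the state --- in contrast to the $\sigma_k$ appearing in $L$ --- which is exactly what makes $\overline K$ and $\underline K$, but not $K$ itself, honest branching processes.
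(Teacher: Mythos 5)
Your proof is correct and is exactly the argument the paper has in mind: the paper offers no written proof, declaring the characterization ``immediate from the construction,'' and your unpacking --- per-tree decomposition into i.i.d.\ mixed-binomial contributions, geometric tree sizes from Lemma~\ref{xgeo}, independence of the uniforms from the forest, and the observation that the deterministic truncation times $\sigma_0$ and $\sigma_{\lceil\varepsilon N\rceil}$ (unlike the state-dependent $\sigma_k$ in $L$) are what make $\overline K$ and $\underline K$ genuine branching processes --- is precisely the bookkeeping that justifies that claim.
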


\subsection{A Galton-Watson approximation}\label{subsect:coupling} 
A crucial role in our analysis of stage 1 of the sweep will be played by equation \eqref{eq:monotone}, which we are now going to prove. Let $b$ be such that \eqref{sNpolynom} holds, and assume $K_0=k$ for some $k\leq \varepsilon N.$ 
We will show that if $\alpha>b,$ then with sufficiently large probability $J>N$, and $M_{\sigma_k}<N.$ The first part will require some work. To start with, we will work with a slight modification of $J$.
Let 
\be \tilde{J}:=\inf\Big\{j:\frac{N-\sum_{l=1}^{j-1}\tilde{X}_l}{\Gamma N-(j-1)}\in \mathbb{R}\setminus \big[\frac{1}{\Gamma}-\frac{1}{2} N^{-\alpha},\frac{1}{\Gamma}+\frac{1}{2} N^{-\alpha} \big]\Big\}.\ee

\begin{lemma}\label{prop1}
Let $\alpha\in (b,1/2).$ There exists a constant $\tilde{c}$ independent of $N$ such that for $N$ large enough,
\begin{equation}\label{thetatilde}
\P\Big(\tilde{J}> N\,\Big|\,\big|\gamma-\Gamma\big |\leq \frac{1}{2}N^{-\alpha}\Big) \geq 1-2e^{-\tilde{c}N^{1-2\alpha}}.
\end{equation}
\end{lemma}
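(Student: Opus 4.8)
The plan is to recognise the sequence of conditional sampling probabilities occurring in the definition of $\tilde J$ as a martingale with increments of size $O(1/N)$, and then to apply a maximal Azuma--Hoeffding inequality; the hypothesis $\alpha<1/2$ is precisely what makes the resulting tail bound go to $0$.

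Concretely, I would work conditionally on $\Gamma$, treating it as a constant, and for $0\le j\le N-1$ put $S_j:=\sum_{l=1}^j\tilde X_l$ (with $S_0=0$) and
\[
M_j:=\frac{N-S_j}{\Gamma N-j},
\]
which is well defined since $\Gamma>1$ forces $\Gamma N-j\ge(\Gamma-1)N+1>0$ for $j\le N-1$. By \eqref{def:Xtilde} one has $\tilde X_{j+1}=1_{\{U_{j+1}<M_j\}}$, so $M_j$ is exactly the $\F_j$-conditional probability of selecting the $(j+1)$-st individual, where $\F_j:=\sigma(U_1,\dots,U_j)$. A short computation then gives $\E[M_{j+1}\mid\F_j]=M_j$, i.e.\ $(M_j)$ is a martingale; strictly, one stops it at $\tilde J$, so that up to the stopping time the threshold $M_j$ stays inside the interval $[\tfrac1\Gamma-\tfrac12N^{-\alpha},\tfrac1\Gamma+\tfrac12N^{-\alpha}]\subset(0,1)$ and the identity $\E[\tilde X_{j+1}\mid\F_j]=M_j$ holds exactly. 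Since $M_0=1/\Gamma$, the event $\{\tilde J>N\}$ holds precisely when $|M_j-M_0|\le\tfrac12N^{-\alpha}$ for every $0\le j\le N-1$, so it is enough to control $\max_{0\le j\le N-1}|M_j-M_0|$.

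The next step, and the one requiring genuine care, is the increment bound. Expanding the difference of the two fractions gives
\[
M_{j+1}-M_j=\frac{(N-S_j)-\tilde X_{j+1}(\Gamma N-j)}{(\Gamma N-j-1)(\Gamma N-j)} .
\]
Here $0\le N-S_j\le N$ and $0\le\tilde X_{j+1}(\Gamma N-j)\le\Gamma N$, so the numerator is at most $\Gamma N$ in absolute value, while for $j\le N-1$ the denominator is at least $\bigl((\Gamma-1)N\bigr)^2$. On $\{|\gamma-\Gamma|\le\tfrac12N^{-\alpha}\}$ and for $N$ large one has $\Gamma-1\ge\tfrac{\gamma-1}{2}$ and $\Gamma\le 2\gamma$, whence $|M_{j+1}-M_j|\le c/N$ with $c:=8\gamma/(\gamma-1)^2$, uniformly in $j\le N-1$ and in the admissible values of $\Gamma$. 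This is exactly where $\gamma>1$ enters essentially: it keeps $\Gamma N-j$ of order $N$ over the whole range $j\le N$.

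Finally I would apply the maximal form of the Azuma--Hoeffding inequality to $(M_j-M_0)_{0\le j\le N-1}$ with increment bounds $c/N$ over at most $N$ steps, obtaining, on $\{|\gamma-\Gamma|\le\tfrac12N^{-\alpha}\}$,
\[
\P\Bigl(\max_{0\le j\le N-1}|M_j-M_0|>\tfrac12N^{-\alpha}\ \Big|\ \Gamma\Bigr)\le 2\exp\!\Bigl(-\frac{(N^{-\alpha}/2)^2}{2N(c/N)^2}\Bigr)=2\exp\!\bigl(-\tilde c\,N^{1-2\alpha}\bigr),\qquad \tilde c:=\tfrac1{8c^2},
\]
which is where $1-2\alpha>0$ is used. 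Since this bound holds for every value of $\Gamma$ in the conditioning event, taking the conditional expectation given $\{|\gamma-\Gamma|\le\tfrac12N^{-\alpha}\}$ and using the above identification of $\{\tilde J>N\}$ yields \eqref{thetatilde}. Apart from the increment estimate and the minor bookkeeping around stopping at $\tilde J$, this is a textbook concentration argument, so I expect the uniform increment bound to be the only real obstacle.
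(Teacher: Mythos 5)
Your proposal is correct, but it takes a genuinely different route from the paper. The paper does not use any martingale structure: it rewrites the event $\{\tilde J>N\}$ as a band condition on the centred partial sums $\frac{1}{\sqrt N}\sum_{l\le j}(\tilde X_l-\frac1\Gamma)$, then uses the coupling \eqref{coup} (valid up to the exit time, and on $A_\Gamma$ the $\tilde J$-band is contained in the $J$-band) to sandwich these sums between the partial sums of the i.i.d.\ Bernoulli variables $\underline X_l$ and $\overline X_l$, and finally invokes a large-deviation theorem for maxima of sums of independent random variables (Theorem 1 of \cite{Alesk}) applied to $\overline X$ and $\underline X$ separately. You instead observe that $M_j=\frac{N-S_j}{\Gamma N-j}$ is exactly the martingale of sequential sampling without replacement, verify the uniform increment bound $|M_{j+1}-M_j|\le c/N$ on $A_\Gamma$ (your computation of the increment and the lower bound $\Gamma N-j\ge(\Gamma-1)N$, which is where $\gamma>1$ enters, are correct), handle the degeneracy of the conditional probability outside $[0,1]$ by stopping at $\tilde J$, and apply the maximal Azuma--Hoeffding inequality, which delivers the same exponent $N^{1-2\alpha}$ with an explicit constant $\tilde c=\frac{(\gamma-1)^4}{512\gamma^2}$. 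Your identification $\{\tilde J>N\}=\{|M_j-M_0|\le\frac12N^{-\alpha}\ \forall j\le N-1\}$ is exactly right since $M_0=1/\Gamma$ is the centre of the band, and the stopped martingale detects the first exit, so the bookkeeping closes. What each approach buys: yours is more self-contained and elementary (no external large-deviations citation, explicit constants, and no detour through the auxiliary i.i.d.\ variables), while the paper's choice reuses the $\overline X/\underline X$ coupling that it must set up anyway for the Galton--Watson domination in Proposition \ref{GW1}, so within the paper's architecture the marginal cost of their argument is low.
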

\begin{proof}

Let $A_\Gamma:=\big\{\big|\gamma-\Gamma\big |\leq \frac{1}{2}N^{-\alpha}\big\}.$ By the construction and the definition of $\tilde{X}_j$, equation \eqref{thetatilde} is equivalent to
\begin{equation}
\P\Big(\frac{N-\sum_{l=1}^{j-1}\tilde{X}_l}{\Gamma N-(j-1)}\in \big[\frac{1}{\Gamma}-\frac{1}{2}N^{-\alpha},\frac{1}{\Gamma}+\frac{1}{2}N^{-\alpha} \big] \forall j\in \{1,...,N\}\,\Big|\,A_\Gamma\Big)\geq 1-2e^{-{\tilde c}N^{1-2\alpha}}.
\end{equation}
Now rearranging the terms one gets that for $0\leq j\leq N-1$
\be
\frac{1}{\Gamma}-\frac{1}{2}N^{-\alpha}\leq \frac{N-\sum_{l=1}^j\tilde{X}_l}{\Gamma N-j}\leq \frac{1}{\Gamma}+\frac{1}{2}N^{-\alpha} \ee
is equivalent to

\be-\frac{1}{2}N^{1/2-\alpha}\big(\Gamma-\frac{j}{N}\big)\leq \frac{1}{\sqrt{N}}\sum_{l=1}^j\big(\tilde{X}_l-\frac{1}{\Gamma}\big)\leq \frac{1}{2}N^{1/2-\alpha}\big(\Gamma-\frac{j}{N}\big).
\ee

So our aim will be to show that with sufficiently large probability on the event $A_\Gamma$
\[\sup_{j\in \{0,1,2,...,  N-1\}} \{\frac{1}{\sqrt{N}}\sum_{l=1}^j(\tilde{X}_l-\frac{1}{\Gamma})\}\leq \frac{1}{2}N^{1/2-\alpha}(\Gamma-1)\] and 
\[\inf_{j\in \{0,1,2,...,  N-1 \}}\{\frac{1}{\sqrt{N}}\sum_{l=1}^j(\tilde{X}_l-\frac{1}{\Gamma})\}\geq -\frac{1}{2}N^{1/2-\alpha}(\Gamma-1).\]
Due to our assumptions, we can consider $(\overline{X}_j)_{j=0,...,N-1}$ resp. $(\underline{X}_j)_{j=0,...,N-1}$ instead of $(\tilde{X}_j)_{j=0,...,N-1}.$ Indeed, since $\gamma,\Gamma\geq 1$ we have on the event $A_\Gamma$
\be\label{eqgammaGamma}\big|\frac{1}{\gamma}-\frac{1}{\Gamma}\big|\leq \big|\gamma-\Gamma\big|\leq \frac{1}{2}N^{-\alpha}.\ee 
Then 
\[\big[\frac{1}{\Gamma}-\frac{1}{2}N^{-\alpha},\frac{1}{\Gamma}+\frac{1}{2}N^{-\alpha}\big]\subseteq \big [\frac{1}{\gamma}-N^{-\alpha},\frac{1}{\gamma}+N^{-\alpha}\big],\]
which implies that on the event $A_\Gamma,$ \eqref{coup} is valid for every $i\leq \tilde{J}$. We recall the independence between $A_{\Gamma}, \bar{X},\underbar{X}$. Thus we are done if we show
\be \label{eqsup}\P\Big(\sup_{j\in \{0,1,2,...,  N-1\}} \{\frac{1}{\sqrt{N}}\sum_{l=1}^j(\overline{X}_l-\frac{1}{\Gamma})\}\leq \frac{1}{2}N^{1/2-\alpha}(\Gamma-1)\,\Big|\, A_\Gamma\Big)\geq 1-e^{-\tilde{c}N^{1-2\alpha}}\ee and 
\be \label{eqinf}\P\Big(\inf_{j\in \{0,1,2,...,  N-1 \}}\{\frac{1}{\sqrt{N}}\sum_{l=1}^j(\underline{X}_l-\frac{1}{\Gamma})\}\geq \frac{1}{2}N^{1/2-\alpha}(-\Gamma+1)\,\Big|\, A_\Gamma\Big)\geq 1-e^{-\tilde{c}N^{1-2\alpha}}.\ee
This is an application of large deviations for maxima of sums of independent random variables, see for example \cite{Alesk}. Observing that $\E[\overline{X}_j]=\frac{1}{\gamma}+N^{-\alpha}$ and $\var(\overline{X}_j)=\gamma^{-1}(1-\gamma^{-1}+O(N^{-\alpha})),$ we obtain by a direct application of Theorem 1 of \cite{Alesk} that for any $A>0$ there exists $\tilde{c}_1=\tilde{c}_1(A,\gamma)\in(0,\infty)$ such that
\be \P\Big(\sup_{j\in \{0,1,2,...,  N-1\}} \{\frac{1}{\sqrt{N}}\sum_{l=1}^j(\overline{X}_l-\frac{1}{\gamma}-N^{-\alpha})\}> A N^{1/2-\alpha}\Big)\leq e^{-\tilde{c}_1N^{1-2\alpha}}.\ee 
Then \eqref{eqsup} follows with \eqref{eqgammaGamma}. Similarly we obtain \eqref{eqinf}.
\end{proof}

\begin{corollary}\label{GWbounds}
Let $\alpha\in (b,1/2)$. There exists a constant $c$ independent of $N$ such that for $N$ large enough
\begin{equation}
\P\big(J>N \big) \geq 1-e^{-cN^{1-2\alpha}}.
\end{equation}
\end{corollary}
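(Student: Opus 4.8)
The plan is to derive Corollary~\ref{GWbounds} from Lemma~\ref{prop1} by removing the conditioning on the event $A_\Gamma:=\{|\gamma-\Gamma|\le\tfrac12 N^{-\alpha}\}$. Two ingredients are needed: that on $A_\Gamma$ one has the inclusion $\{\tilde J>N\}\subseteq\{J>N\}$, so that Lemma~\ref{prop1} already controls $\P(J>N\mid A_\Gamma)$ from below; and that $\P(A_\Gamma^c)$ is exponentially small, of order $e^{-cN^{1-2\alpha}}$, matching the exponent in Lemma~\ref{prop1}. These then combine multiplicatively.

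For the first ingredient, recall from the proof of Lemma~\ref{prop1} that on $A_\Gamma$ the interval inclusion $[\tfrac1\Gamma-\tfrac12 N^{-\alpha},\,\tfrac1\Gamma+\tfrac12 N^{-\alpha}]\subseteq[\tfrac1\gamma-N^{-\alpha},\,\tfrac1\gamma+N^{-\alpha}]$ holds (this is \eqref{eqgammaGamma} together with the line following it). Since $\tilde J$ is the first index $j$ for which the ratio $\frac{N-\sum_{l=1}^{j-1}\tilde X_l}{\Gamma N-(j-1)}$ leaves the smaller of these intervals, while $J$ is the first index for which it leaves the larger one, on $A_\Gamma$ the event that all these ratios for $j\le N$ stay in the smaller interval --- that is, $\{\tilde J>N\}$ --- is contained in the event that they all stay in the larger one --- that is, $\{J>N\}$. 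Hence $\{\tilde J>N\}\cap A_\Gamma\subseteq\{J>N\}$.

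For the second ingredient, write $\Gamma=\tfrac1N\big(M^{(k)}_{\sigma_k}+Z^{(N-k)}_{\sigma_k}\big)$, which by the negative-binomial law of the Yule process is $\tfrac1N$ times a sum of $N$ independent geometric random variables: $k$ with parameter $e^{-(r+\varrho_N)\sigma_k}$ and $N-k$ with parameter $e^{-r\sigma_k}$. For $k\le\varepsilon N$ and $N$ large, $\sigma_k$ lies in the fixed compact interval $[\tfrac{\log\gamma}{r+1},\,\tfrac{\log\gamma}{r}]$ --- it is at least $\tfrac{\log\gamma}{r+\varrho_N}$ because $\E[Y_t]\le Ne^{(r+\varrho_N)t}$, and at most $\sigma_0=\tfrac{\log\gamma}{r}$ because $\E[Y_t]\ge Ne^{rt}$ --- so these geometric parameters stay in a compact subinterval of $(0,1)$, uniformly in $N$ and $k\le\varepsilon N$, and the summands have exponential moments bounded uniformly in a neighbourhood of $0$. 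Since $\E[\Gamma]=\gamma$ by the defining relation \eqref{sigmak} of $\sigma_k$, a Bernstein (Cram\'er--Chernoff) bound for sums of independent sub-exponential variables gives, for some $c'>0$ independent of $N$ and $k$ and using $\alpha<1/2$ (so that the deviation $\tfrac12 N^{-\alpha}\to 0$ and one is in the Gaussian regime),
\[\P\big(A_\Gamma^c\big)=\P\Big(|\Gamma-\gamma|>\tfrac12 N^{-\alpha}\Big)\le 2e^{-c'N(N^{-\alpha})^2}=2e^{-c'N^{1-2\alpha}}.\]

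Combining the two ingredients via $\P(J>N)\ge\P\big(\{\tilde J>N\}\cap A_\Gamma\big)=\P(A_\Gamma)\,\P(\tilde J>N\mid A_\Gamma)$ and inserting Lemma~\ref{prop1},
\[\P(J>N)\ge\big(1-2e^{-c'N^{1-2\alpha}}\big)\big(1-2e^{-\tilde c N^{1-2\alpha}}\big)\ge 1-e^{-cN^{1-2\alpha}}\]
for any $c<\min(c',\tilde c)$ and all sufficiently large $N$, which is the claim. The only step that is not pure bookkeeping is the concentration bound for $\Gamma$; this is a routine Chernoff estimate once one observes that the relevant geometric parameters are bounded away from $0$ and $1$ uniformly in $N$ and $k\le\varepsilon N$, and the circumstance that the target deviation scale $N^{-\alpha}$ dominates the $N^{-1/2}$ fluctuation scale of $\Gamma$ is exactly what forces the exponent $N^{1-2\alpha}$ to coincide with the one already appearing in Lemma~\ref{prop1}.
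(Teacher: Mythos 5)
Your proposal is correct and follows essentially the same route as the paper: reduce to Lemma \ref{prop1} via the interval inclusion that gives $\{\tilde J>N\}\cap A_\Gamma\subseteq\{J>N\}$, and then show $\P(A_\Gamma^c)\le e^{-c'N^{1-2\alpha}}$ by a large-deviation bound for $Y_{\sigma_k}$ as a sum of $N$ independent geometric random variables with parameters bounded away from $0$ and $1$. The only cosmetic difference is that the paper invokes a ready-made Cram\'er-type theorem for non-identically distributed summands, whereas you justify the concentration of $\Gamma$ directly by a Chernoff/Bernstein estimate; both yield the same exponent $N^{1-2\alpha}$.
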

\begin{proof}
Recall from the proof of the previous lemma that if $|\frac{1}{\gamma}-\frac{1}{\Gamma}|\leq \frac{1}{2}N^{-\alpha}$ then $$[\frac{1}{\Gamma}-\frac{1}{2}N^{-\alpha},\frac{1}{\Gamma}+\frac{1}{2}N^{-\alpha}]\subseteq [\frac{1}{\gamma}-N^{-\alpha},\frac{1}{\gamma}+N^{-\alpha}],$$ which implies that in this case $\tilde{J}<J$. We already observed that $|\frac{1}{\gamma}-\frac{1}{\Gamma}|\leq |\gamma-\Gamma|$, so it remains to show that $ |\gamma-\Gamma|<\frac{1}{2}N^{-\alpha}$ with large probability. Indeed, for $l=1,2,\dots,N$ and $N$ large enough
\begin{eqnarray*}
\P\big( |\Gamma-\gamma|\leq \frac{1}{2}N^{-\alpha}  \big)
&=& \P\big(|\frac{Y_{\sigma_l}-N\gamma}{\sqrt{N}}|\leq \frac{1}{2}N^{1/2-\alpha} \big)\\
&\geq &1- e^{-c^\prime N^{1-2\alpha}}
\end{eqnarray*}
for some constant in $c^\prime$ independent of $N$, where the last inequality follows from a generalisation of Cram\'er's theorem, see Theorem 2 of \cite{PR2008} (note that $\sigma_l$ is a sum of independent but not identically distributed random variables).  Let $c$ be a constant independent of $N$ such that $c>\max(c^\prime,\tilde{c}),$ where $\tilde{c}$ is the constant from Lemma \ref{prop1}. For $N$ large enough
\begin{eqnarray*}
\P\Big(J>N\Big) 
&\geq & \P\Big(\tilde{J}>N,|\frac{1}{\gamma}-\frac{1}{\Gamma}|\leq \frac{1}{2}N^{-\alpha}\Big)\\
&\geq & \P\Big(\tilde{J}>N \Big)-\P\Big(|\frac{1}{\gamma}-\frac{1}{\Gamma}|> \frac{1}{2}N^{-\alpha}\Big)\\
&\geq & 1- 2e^{-\tilde{c} N^{1-2\alpha}}- e^{-c^\prime N^{1-2\alpha}}\geq 1- e^{-c N^{1-2\alpha}}.
\end{eqnarray*}
\end{proof}
\begin{lemma}\label{lem:one-step}
Let $\alpha\in (b,1/2),$ and $0<\varepsilon <1/\gamma.$ Assume $\underline{K}_0\leq K_0\leq \overline{K}_0$ and $K_0=k,$ for some $k\leq \varepsilon N.$ There exists $c>0$ independent of $N$ such that for all $N$ large enough,
\[\P(M_{\sigma_k}<N)\geq  1-e^{-cN}.\]
\end{lemma}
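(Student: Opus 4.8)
The plan is to reduce the statement to a standard large-deviation bound for a sum of i.i.d.\ geometric random variables, after two elementary monotonicity observations. Recall from Proposition~\ref{def:selection} (and Corollary~A.4) that, conditionally on $K_0=k$, the number $M_{\sigma_k}$ of mutants at the end of day~$0$ is negative binomial with parameters $k$ and $p_k:=e^{-(r+\varrho_N)\sigma_k}$; in particular $M_{\sigma_k}$ has the law of $\sum_{i=1}^{k}G_i$ with $G_i$ i.i.d.\ geometric on $\{1,2,\dots\}$ of success parameter $p_k$, so $\E G_i=1/p_k$. Since $\E[Y_t]=k e^{(r+\varrho_N)t}+(N-k)e^{rt}\ge N e^{rt}$, the defining relation~\eqref{sigmak} forces $\sigma_k\le\sigma_0=\sigma=\tfrac{\log\gamma}{r}$, whence $p_k\ge q_N:=e^{-(r+\varrho_N)\sigma}=\gamma^{-(1+\varrho_N/r)}$. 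Coupling the underlying Yule processes in the natural way (adding founding lineages, running time forward), $M_{\sigma_k}$ is stochastically dominated by $M':=\sum_{i=1}^{\lceil\varepsilon N\rceil}G'_i$ with $G'_i$ i.i.d.\ geometric of parameter $q_N$. Hence it suffices to show $\P(M'\ge N)\le e^{-cN}$ for some $c>0$ independent of $N$ (and of $k$, since $M'$ no longer depends on $k$).

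Next I would use that, by~\eqref{sNpolynom}, $\varrho_N\to0$, so $q_N\to 1/\gamma$; since $\varepsilon<1/\gamma$, there is a constant $q_0\in(\varepsilon,1/\gamma)$ with $q_N\in[q_0,1/\gamma]\subset(0,1)$ for all large $N$. A Chernoff bound then gives, for any fixed $\lambda\in(0,-\log(1-q_0))$,
\[
\P(M'\ge N)\le e^{-\lambda N}\Big(\tfrac{q_N e^{\lambda}}{1-(1-q_N)e^{\lambda}}\Big)^{\lceil\varepsilon N\rceil}\le e^{\psi_N(\lambda)}\,\exp\big(-N(\lambda-\varepsilon\psi_N(\lambda))\big),
\]
where $\psi_N(\lambda):=\log\frac{q_N e^\lambda}{1-(1-q_N)e^\lambda}$ is nonnegative, vanishes at $\lambda=0$, and satisfies $\psi_N'(0)=1/q_N$ with a second derivative bounded uniformly over $q_N\in[q_0,1/\gamma]$ and $\lambda\in(0,-\log(1-q_0))$. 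Therefore $\lambda-\varepsilon\psi_N(\lambda)=\lambda(1-\varepsilon/q_N)+O(\lambda^2)$, the error being uniform in $N$. Since $1-\varepsilon/q_N\ge 1-\varepsilon/q_0>0$, choosing $\lambda$ small enough (depending only on $\gamma$ and $\varepsilon$) keeps $\lambda-\varepsilon\psi_N(\lambda)$ bounded below by a positive constant, and since $e^{\psi_N(\lambda)}$ stays bounded, this yields $\P(M_{\sigma_k}\ge N)\le\P(M'\ge N)\le e^{-cN}$ for all large $N$, uniformly over $k\le\varepsilon N$; equivalently, $\P(M_{\sigma_k}<N)\ge 1-e^{-cN}$.

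I do not expect a real obstacle here, as this is only a concentration estimate. The single place where the hypothesis $\varepsilon<1/\gamma$ is used is the positivity of the Chernoff exponent: the expected number of mutants at day's end is $\lceil\varepsilon N\rceil/q_N\sim\gamma\varepsilon N<N$, so $\{M_{\sigma_k}\ge N\}$ is a genuine large deviation. The only point requiring mild care is keeping $\lambda$ and $c$ independent of $N$ and $k$, which is exactly why I pass to the $k$-free dominating variable $M'$ and exploit that $q_N$ remains in a fixed compact subinterval of $(0,1)$; the finer asymptotics of $\sigma_k$ (needed elsewhere in the proof of Theorem~\ref{thm:fixation}) are not required for this lemma.
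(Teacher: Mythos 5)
Your proof is correct and takes essentially the same route as the paper: both reduce $M_{\sigma_k}$ to a sum of at most $\lceil\varepsilon N\rceil$ i.i.d.\ geometric variables with mean close to $\gamma$, and use $\varepsilon<1/\gamma$ to make $\{M_{\sigma_k}\ge N\}$ a genuine large-deviation event. The only difference is cosmetic: the paper invokes Cram\'er's theorem where you carry out the Chernoff bound explicitly, which if anything treats the $N$-dependence of the geometric parameter (and the uniformity in $k$) more carefully.
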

\begin{proof}
Let $G_j$ be the number of offspring of the mutant number $j\leq k\leq \varepsilon N$ at the end of the day, namely at time $\sigma_k$. By construction they are i.i.d. with finite second moment. Let $(G'_j)_{j\in \mathbb{N}}$ be i.i.d random variables equal in distribution to $G_1.$ Note that $\E[G_1]\leq e^{(r+\varrho_N)\sigma_0}=\gamma(1+o(1)).$ Since $\varepsilon <1/\gamma$ we can choose $N$ large enough such that $\E[G_1]\leq 1/\varepsilon.$ Then 
\begin{eqnarray}\label{lessN}
\P\big( M_{\sigma_k}<N\big)=\P\big(\sum_{j=1}^kG_j<N\big)&\geq&\P\big(\sum_{j=1}^{\varepsilon N}G^\prime_j<N\big)\nonumber\geq 1-e^{-cN}
\end{eqnarray}
for a suitable $c>0$. The last inequality follows from Cramer's Theorem, since $\varepsilon \E[G_1]< 1.$
\end{proof}

Recall that $T^N_{1}=\inf\{i\geq 1: K_i\geq \varepsilon N\}.$ 

\begin{proposition}\label{GW1} 
Let $\alpha \in (b,1/2)$ and $0<\varepsilon<1/\gamma$. Assume $\underline{K}_0\leq K_0\leq \overline{K}_0$ and $K_0=k\leq \varepsilon N.$ Then there exists $c$ independent of $N$ such that for $N$ large enough
\begin{eqnarray}\label{eq:domination}
\P\big(\overline{K}_{\min (i,T^N_1)}\geq K_{\min (i,T^N_1)} \geq \underline{K}_{\min (i,T^N_1)},\forall i\leq g \Big)\geq (1-2e^{-c N^{1-2\alpha}})^g\quad \mbox{ for all } g\in\N_0.
\end{eqnarray}
\end{proposition}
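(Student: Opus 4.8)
The plan is to propagate the two-sided bound $\underline K_i\le K_i\le \overline K_i$ from one day to the next throughout phase~1, using \eqref{coup} and the one-step comparison recorded just before Definition~\ref{KK} (namely that $\underline L\le L\le \overline L$ on $\{\underline k\le k\le\overline k\}\cap\{J\ge M_{\sigma_k}\}$), and to control the exponentially small probability of the ``bad'' days by successive conditioning rather than a union bound. Realize the Markov chain of Definition~\ref{KK} so that its $l$-th transition ($l\in\N_0$) is driven by an independent copy of the randomness used in the construction preceding that definition --- the rate-$(r+\varrho_N)$ Yule forest up to $\sigma_0$ and the uniforms $(U_j)$, from which $\Gamma$, $J$ and $M_{\sigma_{K_l}}$ are built; write $J^{(l)},M^{(l)}_{\sigma_{K_l}}$ for these quantities of day $l$, and let $(\mathcal F_i)_{i\ge0}$ be the filtration generated by $(\underline K_j,K_j,\overline K_j)_{j\le i}$ together with the driving randomness of transitions $0,\dots,i-1$. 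Then $T^N_1=\inf\{i\ge1:K_i\ge\varepsilon N\}$ is an $(\mathcal F_i)$-stopping time. Set $\mathcal E_l:=\{J^{(l)}>N\}\cap\{M^{(l)}_{\sigma_{K_l}}<N\}$, and put $\mathcal E_l^{*}:=\mathcal E_l$ on $\{l<T^N_1\}$ and $\mathcal E_l^{*}:=\Omega$ on $\{l\ge T^N_1\}$; since $\{l<T^N_1\}\in\mathcal F_l$ and $\mathcal E_l$ is built from $K_l$ and the day-$l$ randomness, $\mathcal E_l^{*}$ is $\mathcal F_{l+1}$-measurable.

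Next I would show by induction on $m$ that, on $E_g:=\bigcap_{l=0}^{g-1}\mathcal E_l^{*}$, one has $\underline K_m\le K_m\le\overline K_m$ for all $m\le\min(g,T^N_1)$. The base case $m=0$ is the hypothesis. For the step take $m<\min(g,T^N_1)$: then $m<T^N_1$, so $K_m\le\varepsilon N$, and by the inductive hypothesis $\underline K_m\le K_m\le\overline K_m$; moreover $m\le g-1$, so on $E_g$ we are on $\mathcal E_m$, that is $J^{(m)}>N>M^{(m)}_{\sigma_{K_m}}$. Hence the one-step comparison applies to the $m$-th transition and yields $\underline K_{m+1}\le K_{m+1}\le\overline K_{m+1}$. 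Since the values taken by $\min(i,T^N_1)$ as $i$ ranges over $\{0,\dots,g\}$ are exactly $0,1,\dots,\min(g,T^N_1)$, this shows that $E_g$ is contained in the event appearing on the left-hand side of \eqref{eq:domination}.

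It remains to bound $\P(E_g)$ from below. On $\{l\ge T^N_1\}$ we have $\P(\mathcal E_l^{*}\mid\mathcal F_l)=1$; on $\{l<T^N_1\}$ we have $K_l\le\varepsilon N$, and since the day-$l$ randomness is independent of $\mathcal F_l$, Corollary~\ref{GWbounds} gives $\P(J^{(l)}>N\mid\mathcal F_l)\ge1-e^{-c_1N^{1-2\alpha}}$ and Lemma~\ref{lem:one-step} gives $\P(M^{(l)}_{\sigma_{K_l}}<N\mid\mathcal F_l)\ge1-e^{-c_2N}$, so that $\P(\mathcal E_l^{*}\mid\mathcal F_l)\ge1-2e^{-cN^{1-2\alpha}}$ with $c:=\min(c_1,c_2)$ (using $N^{1-2\alpha}\le N$ since $\alpha<1/2$). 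Writing $\P(E_g)=\E\big[\prod_{l=0}^{g-1}\mathbf 1_{\mathcal E_l^{*}}\big]$ and peeling off the factors from $l=g-1$ down to $l=0$ --- legitimate since $\prod_{l=0}^{m-1}\mathbf 1_{\mathcal E_l^{*}}$ is $\mathcal F_m$-measurable --- gives $\P(E_g)\ge(1-2e^{-cN^{1-2\alpha}})^g$, which together with the previous paragraph establishes \eqref{eq:domination}.

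The only genuinely delicate point is the bookkeeping behind the events $\mathcal E_l^{*}$, which serves to isolate the two mechanisms that can break the coupling on a given day: the elementary inequality \eqref{coup} fails once $J<M_{\sigma_{K_l}}$, and the a priori bound $M_{\sigma_{K_l}}<N$ of Lemma~\ref{lem:one-step} is only at our disposal while the mutant count has not yet left phase~1. Freezing the exceptional events to $\Omega$ after $T^N_1$ takes care of the second mechanism, and producing the \emph{multiplicative} bound $(1-2e^{-cN^{1-2\alpha}})^g$ by iterated conditioning --- rather than by a union bound, which would only give $1-2ge^{-cN^{1-2\alpha}}$ --- is what matches the form of \eqref{eq:domination}. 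All the analytic input has already been supplied: the large-deviation control of $J$ from Lemma~\ref{prop1} and Corollary~\ref{GWbounds}, and the Cram\'er-type bound on the mutant offspring sum from Lemma~\ref{lem:one-step}.
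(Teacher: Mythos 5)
Your proof is correct and follows essentially the same route as the paper's: combine Corollary \ref{GWbounds} and Lemma \ref{lem:one-step} into a one-step domination bound $1-2e^{-cN^{1-2\alpha}}$, valid uniformly over starting states $k\le\varepsilon N$, and iterate it multiplicatively by conditioning on the past rather than by a union bound. Your explicit filtration bookkeeping with the events $\mathcal E_l^{*}$ frozen after $T_1^N$ merely spells out the conditioning step that the paper performs informally in its induction.
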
      

\begin{proof}
Corollary \ref{GWbounds} implies that $\P(\underline{K}_1\leq K_1\leq \overline{K}_1\; |\; M_{\sigma_k}<N)\geq1-e^{-c N^{1-2\alpha}}.$ Thus by Lemma \ref{lem:one-step} we have 
\be\label{eq:one-step} \P(\underline{K}_1\leq K_1\leq\overline{K}_1)\geq 1-2e^{-cN^{1-2\alpha}},\ee
which implies
\begin{eqnarray}\label{eq:induction}\notag
\P\big(\overline{K}_{\min (g,T^N_1)}\geq K_{\min (g,T^N_1)} \geq \underline{K}_{\min (g,T^N_1)}\,|\,\overline{K}_{\min (i,T^N_1)}\geq K_{\min (i,T^N_1)} \geq \underline{K}_{\min (i,T^N_1)},\forall i\leq g-1 \Big)\\ \geq 1-2e^{-c N^{1-2\alpha}}.
\end{eqnarray}
From \eqref{eq:induction} the result follows easily by induction:
Assume that \eqref{eq:domination} is true for $g-1$. Then
 \begin{align*}
\P &\big(\overline{K}_{\min (i,T^N_1)}\geq K_{\min (i,T^N_1)} \geq \underline{K}_{\min (i,T^N_1)},\forall i\leq g \Big)\\
&=
\P\big(\overline{K}_{\min (g,T^N_1)}\geq K_{\min (g,T^N_1)} \geq \underline{K}_{\min (g,T^N_1)}|\overline{K}_{\min (i,T^N_1)}\geq K_{\min (i,T^N_1)} \geq \underline{K}_{\min (i,T^N_1)},\forall i\leq g-1 \Big)\nonumber\\
&\hspace{0.3cm}\times \P\big(\overline{K}_{\min (i,T^N_1)}\geq K_{\min (i,T^N_1)} \geq \underline{K}_{\min (i,T^N_1)},\forall i\leq g-1 \Big)\\
&\geq  (1-2e^{-c N^{1-2\alpha}}) (1-2e^{-c N^{1-2\alpha}})^{g-1}.
\end{align*}
\end{proof}

\subsection{Asymptotics of the stopping rule}\label{subsect:stopping}

In order to put the Galton-Watson bounds to use, we need some control on $\sigma_k.$

\begin{lemma}\label{lem:sigma} Under the assumptions of this section, for any $k=1,2,\ldots,N,$
\be  \sigma_{k}=\frac{\log \gamma}{r+k\varrho_N/N}+\frac{k}{N}O(\varrho_N^2)+\frac{k^2}{N^2}O(\varrho_N^2).\ee
where $|O(\varrho_N^2)|/\varrho_N^2$ is bounded uniformly in $N$ and $k$. 
\end{lemma}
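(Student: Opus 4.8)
The plan is to observe that $\sigma_k$ is the unique root of an explicit, strictly increasing function, to evaluate that function at the candidate value $\frac{\log\gamma}{r+k\varrho_N/N}$, and to convert the (small) resulting defect into a bound on $\sigma_k$ by means of a lower bound on the derivative.

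First I would record that, by the independence of $M^{(k)}$ and $Z^{(N-k)}$ in \eqref{defY} and the Yule mean formula $\E[Z_t^{(N)}]=Ne^{rt}$, one has $\E[Y_t]=k e^{(r+\varrho_N)t}+(N-k)e^{rt}$, so that $\sigma_k$ is the unique solution of $F(t)=\gamma$, where $F(t):=\tfrac1N\E[Y_t]=\tfrac kN e^{(r+\varrho_N)t}+(1-\tfrac kN)e^{rt}$. Since $F'(t)=\tfrac kN(r+\varrho_N)e^{(r+\varrho_N)t}+(1-\tfrac kN)re^{rt}\ge r$ for all $t\ge 0$, the function $F$ is continuous, strictly increasing and unbounded, with $F(0)=1<\gamma$; moreover from $Ne^{rt}\le\E[Y_t]\le Ne^{(r+\varrho_N)t}$ we get $\tfrac{\log\gamma}{r+\varrho_N}\le\sigma_k\le\tfrac{\log\gamma}{r}$, so $\varrho_N\sigma_k\to 0$ uniformly in $k$.

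Next, writing $p:=k/N$ and $t_*:=\tfrac{\log\gamma}{r+p\varrho_N}$, a short computation gives $(r+\varrho_N)t_*=\log\gamma+(1-p)\eta$ and $rt_*=\log\gamma-p\eta$ with $\eta:=\tfrac{\varrho_N\log\gamma}{r+p\varrho_N}$, and $|\eta|\le\tfrac{\varrho_N\log\gamma}{r}=O(\varrho_N)$ uniformly in $k$. The point of this choice is that the weighted average of the first-order exponents vanishes: $p\cdot(1-p)\eta+(1-p)\cdot(-p\eta)=0$. Hence
\[
F(t_*)-\gamma=\gamma\Big(p\,e^{(1-p)\eta}+(1-p)\,e^{-p\eta}-1\Big)=\gamma\sum_{m\ge 2}\frac{p\big((1-p)\eta\big)^m+(1-p)\big(-p\eta\big)^m}{m!},
\]
and for each $m\ge 2$ one has $\big|p((1-p)\eta)^m+(1-p)(-p\eta)^m\big|=p(1-p)|\eta|^m\big|(1-p)^{m-1}+(-1)^m p^{m-1}\big|\le p(1-p)|\eta|^m$, using $(1-p)^{m-1}+p^{m-1}\le 1$. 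Summing the series, $|F(t_*)-\gamma|\le\gamma\,p(1-p)\big(e^{|\eta|}-1-|\eta|\big)\le C\,p(1-p)\,\varrho_N^2$ for all large $N$, with $C$ depending only on $r$ and $\gamma$.

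Finally, since $F'\ge r$ on $[0,\infty)$ and $F(\sigma_k)=\gamma$, the mean value theorem yields $r|\sigma_k-t_*|\le|F(\sigma_k)-F(t_*)|\le C p(1-p)\varrho_N^2$, i.e. $\sigma_k=t_*+\tfrac{k(N-k)}{N^2}\beta_{N,k}$ with $\sup_{N,k}|\beta_{N,k}|/\varrho_N^2<\infty$. Expanding $\tfrac{k(N-k)}{N^2}=\tfrac kN-\tfrac{k^2}{N^2}$ then gives the asserted form. The only genuinely delicate point is the exact cancellation of the $m=1$ term forced by the choice of $t_*$, together with the observation that every surviving term of the expansion carries a factor $p(1-p)=\tfrac kN-\tfrac{k^2}{N^2}$; this is precisely what upgrades a crude $O(\varrho_N)$ error to the stated $\tfrac kN O(\varrho_N^2)+\tfrac{k^2}{N^2}O(\varrho_N^2)$ and makes the implied constants uniform in both $N$ and $k$.
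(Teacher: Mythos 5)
Your proof is correct, and it takes a genuinely different route from the paper's. The paper works directly with the defining equation $\gamma N=e^{r\sigma_k}\bigl(ke^{\varrho_N\sigma_k}+N-k\bigr)$: it takes logarithms, Taylor-expands first the exponential and then the logarithm, and solves for $\sigma_k$, using the a priori bracketing $\frac{\log\gamma}{r+\varrho_N}\leq\sigma_k\leq\frac{\log\gamma}{r}$ to control the remainders. You instead run a stability argument for the root of $F(t)=\frac1N\E[Y_t]$: you plug in the candidate $t_*=\frac{\log\gamma}{r+p\varrho_N}$, exploit the exact cancellation of the linear term in $p\,e^{(1-p)\eta}+(1-p)e^{-p\eta}$ to show the defect $F(t_*)-\gamma$ is $O\bigl(p(1-p)\varrho_N^2\bigr)$, and convert this into a bound on $\sigma_k-t_*$ via the uniform derivative bound $F'\geq r$ and the mean value theorem. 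Your version makes the uniformity of the implied constants in $N$ and $k$ completely transparent, avoids the nested Taylor bookkeeping, and in fact yields the slightly sharper error $\frac{k(N-k)}{N^2}O(\varrho_N^2)$, which vanishes at both $k=0$ and $k=N$ and expands into the form $\frac kN O(\varrho_N^2)+\frac{k^2}{N^2}O(\varrho_N^2)$ stated in the lemma. The paper's computation is shorter to write down but leaves more of the uniformity implicit; both are valid.
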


\begin{proof}
Note that $\frac{\log\gamma}{r+\varrho_N}=\sigma_{N}\leq \sigma_{k}\leq \sigma_0=\frac{\log\gamma}{r}$ for all $k=0,...,N.$ Hence $\lim_{N\to\infty}\sigma_{k}=\frac{\log\gamma}{r}$ for 
all $k.$ We assume that $N$ is large enough such that $\frac{\log\gamma}{2r}\leq \sigma_k\leq \frac{\log\gamma}{r}.$\\
By \eqref{defY} and \eqref{sigmak} we have

\be \begin{split}\gamma N=\E[M_{\sigma_k}^{(k)}]+\E[Z_{\sigma_k}^{(N-k)}]=ke^{(r+\varrho_N)\sigma_{k}}+(N-k)e^{r\sigma_{k}}.
\end{split}\ee
Hence $\sigma_k$ satisfies the equation
\be \gamma N=e^{r\sigma_{k}}\big(k e^{\varrho_N\sigma_{k}}+N-k\big).\ee
Dividing by $N,$ taking logarithms on both sides, and using Taylor expansion first on the exponential and then on the logarithm leads to

\be \begin{split}
\log \gamma =&r\sigma_{k}+\log\big(1+\frac{k}{N}\varrho_N\sigma_{k}+\frac{k}{N}O(\varrho_N^2)\big)\\
=& r\sigma_{k}+\frac{k}{N}\varrho_N\sigma_{k}+\frac{k}{N}O(\varrho_N^2)+\frac{k^2}{N^2}O(\varrho_N^2).\end{split}\ee
Here we use the fact that $\frac{\log\gamma}{2r}\leq \sigma_{k}\leq \frac{\log\gamma}{r}$ for {all $k$ if $N$ is sufficiently large}. Rewriting, we get the desired expression of $\sigma_k.$
\end{proof}

We will use this mostly in the following form, which is an immediate application of Lemma \ref{lem:sigma}.
\begin{corollary}\label{cor:sigma}
For any $k=1,2,\ldots,N,$ as $N\to\infty$
$$e^{(r+\varrho_N)\sigma_{k}}=\gamma\big(1+(1-\frac{k}{N})\frac{\varrho_N}{r}\log \gamma+O(\varrho_N^2)\big)$$
where $|O(\varrho_N^2)|/\varrho_N^2$ is bounded uniformly in $N$ and $k$. 
\end{corollary}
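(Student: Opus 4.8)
The plan is to substitute the expansion of $\sigma_k$ provided by Lemma~\ref{lem:sigma} into the exponent $(r+\varrho_N)\sigma_k$, expand everything in powers of $\varrho_N$, and then exponentiate. First I would isolate the dominant contribution. By Lemma~\ref{lem:sigma} the leading part of $\sigma_k$ is $\frac{\log\gamma}{r+k\varrho_N/N}$, so I would write
$$(r+\varrho_N)\,\frac{\log\gamma}{r+k\varrho_N/N} = \log\gamma\cdot\frac{1+\varrho_N/r}{1+k\varrho_N/(Nr)}$$
and Taylor-expand the ratio: since $\varrho_N\to 0$ and $k/N\in[0,1]$, one gets $\frac{1+\varrho_N/r}{1+k\varrho_N/(Nr)} = 1 + (1-\frac{k}{N})\frac{\varrho_N}{r} + O(\varrho_N^2)$, where the implied constant in $O(\varrho_N^2)$ is independent of $k$ because all the coefficients that appear involve only $k/N\le 1$.

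Next I would dispose of the remainder terms. The remaining pieces of $\sigma_k$ from Lemma~\ref{lem:sigma} are $\frac{k}{N}O(\varrho_N^2)+\frac{k^2}{N^2}O(\varrho_N^2)$; multiplying them by the bounded factor $r+\varrho_N$ and using $k/N\le 1$ keeps them of order $O(\varrho_N^2)$ uniformly in $N$ and $k$. Combining this with the previous step yields
$$(r+\varrho_N)\sigma_k = \log\gamma + \Big(1-\frac{k}{N}\Big)\frac{\varrho_N\log\gamma}{r} + O(\varrho_N^2).$$

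Finally I would exponentiate, $e^{(r+\varrho_N)\sigma_k} = \gamma\exp\!\Big(\big(1-\tfrac{k}{N}\big)\tfrac{\varrho_N\log\gamma}{r}+O(\varrho_N^2)\Big)$, and since the argument of the exponential tends to $0$ uniformly in $k$, a first-order Taylor expansion of $\exp$ gives $\gamma\big(1+(1-\frac{k}{N})\frac{\varrho_N\log\gamma}{r}+O(\varrho_N^2)\big)$, which is exactly the claimed formula. There is no genuine obstacle in this argument; the only point deserving (minor) attention is that every $O(\varrho_N^2)$ arising along the way carries an implied constant that is independent of both $N$ and $k$, and this follows directly from the uniform bounds already recorded in Lemma~\ref{lem:sigma}, together with $0\le k/N\le 1$ and the boundedness of $r+\varrho_N$.
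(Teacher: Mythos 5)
Your proposal is correct and follows exactly the route the paper intends: the paper states Corollary~\ref{cor:sigma} as an ``immediate application'' of Lemma~\ref{lem:sigma}, and your computation -- substituting the expansion of $\sigma_k$, Taylor-expanding the ratio $\frac{1+\varrho_N/r}{1+k\varrho_N/(Nr)}$, absorbing the remainder terms using $0\le k/N\le 1$, and then exponentiating -- is precisely the omitted calculation, with the uniformity in $k$ correctly tracked at each step.
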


\subsection{Asymptotics of the approximating Galton-Watson processes and Proof of Prop.~\ref{prop:selective_advantage}}\label{sect:asymGW}
We can now calculate the asymptotic expectation and variance of our auxiliary Galton-Watson processes.  
\begin{lemma}\label{lem:expvarGW}
Let $\alpha\in(b,1/2).$ Let $(\underline{K}_i)_{i\in\N_0}$ and $(\overline{K}_i)_{i\in\N_0}$ be as defined in Section \ref{subsect:GW-constr} with $\underline{K}_0=K_0=\overline{K}_0=1.$ We have
\begin{equation}
\E_1[\overline{K}_1]=1+\frac{\log \gamma}{r}\varrho_N+o(\varrho_N)\, \qquad \E_1[\underline{K}_1]=1+\frac{\log \gamma}{r}(1-\varepsilon)\varrho_N+o(\varrho_N),
\end{equation} 
and
\begin{equation}
\var_1[\overline{K}_1]=\frac{2(\gamma-1)}{\gamma}(1+O(\varrho_N))\, \,\,\,\,\,\,\,\,\var_1[\underline{K}_1]=\frac{2(\gamma-1)}{\gamma}(1+O(\varrho_N)).
\end{equation}
\end{lemma}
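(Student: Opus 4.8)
The plan is a direct first- and second-moment computation, built on the explicit mixed-binomial description of the offspring laws in Proposition~\ref{def:GW-processes}. Write the one-step offspring number of $\overline K$ issued from a single ancestor as $\overline K_1=\sum_{j=1}^{\overline M}\overline\xi_j$, where conditionally on $\overline M$ the $\overline\xi_j$ are i.i.d.\ Bernoulli with parameter $\bar q:=\frac1\gamma+N^{-\alpha}$, and $\overline M$ is geometric on $\{1,2,\dots\}$ with success probability $e^{-(r+\varrho_N)\sigma_0}$, so that $\E[\overline M]=e^{(r+\varrho_N)\sigma_0}$ and, as for any such geometric law, $\var(\overline M)=\E[\overline M]\big(\E[\overline M]-1\big)$. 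The same representation holds for $\underline K_1$ with $\underline q:=\frac1\gamma-N^{-\alpha}$ and $\E[\underline M]=e^{(r+\varrho_N)\sigma_{\lceil\varepsilon N\rceil}}$.

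For the means I would apply the tower property, $\E_1[\overline K_1]=\bar q\,\E[\overline M]=\big(\tfrac1\gamma+N^{-\alpha}\big)e^{(r+\varrho_N)\sigma_0}$, and then invoke Corollary~\ref{cor:sigma} with $k=0$, which gives $e^{(r+\varrho_N)\sigma_0}=\gamma\big(1+\tfrac{\varrho_N}{r}\log\gamma+O(\varrho_N^2)\big)$. Since $\alpha>b$ and $\varrho_N\sim N^{-b}$, one has $N^{-\alpha}=o(\varrho_N)$, so the prefactor $\gamma N^{-\alpha}$ is absorbed into $o(\varrho_N)$ and the claimed expansion $\E_1[\overline K_1]=1+\tfrac{\log\gamma}{r}\varrho_N+o(\varrho_N)$ drops out. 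For $\underline K_1$ the identical steps apply with Corollary~\ref{cor:sigma} at $k=\lceil\varepsilon N\rceil$; here $k/N=\varepsilon+O(1/N)$, so the factor $(1-k/N)$ reads $1-\varepsilon+O(1/N)$ and the rounding error contributes only $o(\varrho_N)$, giving $\E_1[\underline K_1]=1+\tfrac{\log\gamma}{r}(1-\varepsilon)\varrho_N+o(\varrho_N)$.

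For the variances I would use the law of total variance, $\var_1[\overline K_1]=\E[\overline M]\,\bar q(1-\bar q)+\bar q^2\,\var(\overline M)$. Inserting $\E[\overline M]=\gamma(1+O(\varrho_N))$ (again absorbing $N^{-\alpha}$), hence $\var(\overline M)=\gamma(\gamma-1)(1+O(\varrho_N))$, together with $\bar q(1-\bar q)=\tfrac{\gamma-1}{\gamma^2}(1+O(\varrho_N))$ and $\bar q^2=\tfrac1{\gamma^2}(1+O(\varrho_N))$, one finds that each of the two summands equals $\tfrac{\gamma-1}{\gamma}(1+O(\varrho_N))$, so $\var_1[\overline K_1]=\tfrac{2(\gamma-1)}{\gamma}(1+O(\varrho_N))$. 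The computation for $\underline K_1$ is word for word the same, the only change being that $\E[\underline M]=\gamma(1+O(\varrho_N))$ comes from Corollary~\ref{cor:sigma} at $k=\lceil\varepsilon N\rceil$, which has the same leading term.

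I do not expect a genuine obstacle: the lemma is a routine moment calculation once the mixed-binomial structure and the geometric identity $\var(\overline M)=\E[\overline M](\E[\overline M]-1)$ are in hand. The two points that need a little attention are (i) recording that the relevant geometric laws are supported on $\{1,2,\dots\}$ (reflecting that a Yule population started from one individual never dies), which pins down the mean--variance relation, and (ii) verifying that both $N^{-\alpha}$ and the rounding error $\lceil\varepsilon N\rceil/N-\varepsilon=O(1/N)$ are negligible compared with $\varrho_N$ --- precisely the place where $\alpha>b$ and $\varrho_N\sim N^{-b}$ are used.
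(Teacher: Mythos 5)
Your proposal is correct and follows essentially the same route as the paper: the tower property for the means, the law of total variance with the geometric identity $\var(\overline M)=\E[\overline M](\E[\overline M]-1)$ for the second moments, Corollary~\ref{cor:sigma} to expand $e^{(r+\varrho_N)\sigma_k}$, and the observation that $N^{-\alpha}=o(\varrho_N)$ for $\alpha>b$ to absorb the sampling-probability perturbation. The only cosmetic difference is that the paper writes the conditional variance decomposition directly in terms of $e^{(r+\varrho_N)\sigma_{\lceil\varepsilon N\rceil}}$ and $e^{2(r+\varrho_N)\sigma_{\lceil\varepsilon N\rceil}}$ rather than via your intermediate $(1+O(\varrho_N))$ bookkeeping, which is immaterial.
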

\begin{proof}
Recall $\underline{M}, \overline{M}$ from Proposition \ref{def:GW-processes}. By construction, and from Corollary \ref{cor:sigma}
\begin{eqnarray*}\label{eq3}
\E_1[\underline{K}_1]&=&
(1/\gamma-N^{-\alpha})\E[\underline{M}]\\
&=&(1/\gamma-N^{-\alpha})e^{(r+\varrho_N)\sigma_{\lceil \varepsilon N \rceil}}\\
&=&1+\frac{\log \gamma}{r}(1-\varepsilon) \varrho_N-\gamma N^{-\alpha}+o(\varrho_N)\\
&=&1+\frac{\log \gamma}{r}(1-\varepsilon) \varrho_N+o(\varrho_N)
\end{eqnarray*} 
where the last equality follows from the fact that our assumptions imply that $ N^{-\alpha}=o(\varrho_N)$. In the same way we obtain
\[
\E_1[\overline{K}_1]=1+\frac{\log \gamma}{r} \varrho_N+o(\varrho_N).
\]
It remains to calculate the variance
\begin{eqnarray*}
\var_1[\underline{K}_1]&=&\E_1[\var_1[\underline K_1|\underline{M}]]+\var_1[\E_1[\underline K_1|\underline{M}]]\\
&=&\E_1\big([\underline{M}\big(\frac{1}{\gamma}-N^{-\alpha}\big)\big(1-\frac{1}{\gamma}+N^{-\alpha}\big)]+\var_1\big[\underline{M}\big(\frac{1}{\gamma}-N^{-\alpha}\big)\big]\\
&=&\big(\frac{1}{\gamma}-N^{-\alpha}\big)\big(1-\frac{1}{\gamma}+N^{-\alpha}\big)e^{(r+\varrho_N)\sigma_{\lceil \varepsilon N \rceil}}+\big(\frac{1}{\gamma}-N^{-\alpha}\big)^2\big(e^{2(r+\varrho_N)\sigma_{\lceil \varepsilon N \rceil}}-e^{(r+\varrho_N)\sigma_{\lceil \varepsilon N \rceil}}\big).
\end{eqnarray*}
Plugging in Corollary \ref{cor:sigma}, simplifying and taking into account that $N^{-\alpha}=o(\varrho_N)$ for $\alpha>b$ leads to
\[
\var_1[\underline{K}_1]=\frac{2(\gamma-1)}{\gamma}\big(1+(1-\varepsilon)\varrho_N\frac{\log \gamma}{r}+o(\varrho_N)\big)
=\frac{2(\gamma-1)}{\gamma}+O(\varrho_N).
\]
The same steps lead to $\var_1[\overline{K}_1]=2(\gamma-1)/\gamma+O(\varrho_N).$
\end{proof}

\begin{remark}\label{survive}
(i) This result together with Lemma \ref{lem:one-step} proves Proposition \ref{prop:selective_advantage}.
(ii) Applying Lemma~\ref{lem:GW} from the Appendix shows
\[\P((\overline{K}_i) \mbox{ survives})\sim \frac{C(\gamma)}{r}\varrho_N\]
and 
\[\P((\underline{K}_i) \mbox{ survives})\sim \frac{(1-\varepsilon)C(\gamma)}{r}\varrho_N.\]
\end{remark}

\begin{corollary}\label{cor:probGW}
Under the assumptions of Lemma \ref{lem:expvarGW}, for $k\leq \varepsilon N,$ as $N\to\infty,$
\be\P_k((\overline{K}_i)\mbox{ survives }|\,(\underline{K}_i)\mbox{ survives })=\P_k((\underline{K}_i)\mbox{ dies out}\,|\,(\overline{K}_i)\mbox{ dies out})=1.\ee
Further, 
\be \label{probtwoGW} \P_k((\underline{K}_i)\mbox{ dies out }|\,(\overline{K}_i)\mbox{ survives })\leq \varepsilon (1+o(1)),\ee
and 
\be\label{probtwoGW2}\P_k((\overline{K}_i)\mbox{ survives }|\,(\underline{K}_i)\mbox{ dies out })\leq \varepsilon (1+o(1)).\ee
\end{corollary}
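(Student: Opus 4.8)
The plan is to derive the whole corollary from two facts that are already in place: the pathwise domination $\underline K_i\le\overline K_i$ for every $i$ (this is \eqref{eq:monotone-2}, valid by the construction in Definition \ref{KK}) and the survival‑probability asymptotics recorded in Remark \ref{survive}(ii). Everything else is bookkeeping about the conditioning events.

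First I would dispose of the two identities. Since $\underline K_i\le\overline K_i$ holds for all $i\in\N_0$ almost surely, survival of $(\underline K_i)$ (i.e.\ $\underline K_i\ge 1$ for every $i$) forces survival of $(\overline K_i)$; dually, extinction of $(\overline K_i)$ forces extinction of $(\underline K_i)$. Hence $\{(\underline K_i)\text{ survives}\}\subseteq\{(\overline K_i)\text{ survives}\}$ and $\{(\overline K_i)\text{ dies out}\}\subseteq\{(\underline K_i)\text{ dies out}\}$, and dividing by the probability of the conditioning event gives the two stated conditional probabilities equal to $1$.

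For the two quantitative bounds I would use the same inclusion to split the conditional probabilities. Set $\overline q_N:=\P_1((\overline K_i)\text{ survives})$ and $\underline q_N:=\P_1((\underline K_i)\text{ survives})$; by Remark \ref{survive}(ii), $\overline q_N\sim\frac{C(\gamma)}{r}\varrho_N$ and $\underline q_N\sim\frac{(1-\varepsilon)C(\gamma)}{r}\varrho_N$, so in particular $\underline q_N\le\overline q_N$ and $\underline q_N/\overline q_N\to 1-\varepsilon$. By the branching property of the Galton--Watson processes, survival from $k$ individuals has probability $1-(1-\overline q_N)^k$, respectively $1-(1-\underline q_N)^k$, and from $\{(\underline K_i)\text{ survives}\}\subseteq\{(\overline K_i)\text{ survives}\}$ one gets
\begin{align*}
\P_k\big((\underline K_i)\text{ dies out}\mid(\overline K_i)\text{ survives}\big)&=1-\frac{\P_k((\underline K_i)\text{ survives})}{\P_k((\overline K_i)\text{ survives})},\\
\P_k\big((\overline K_i)\text{ survives}\mid(\underline K_i)\text{ dies out}\big)&=\frac{\P_k((\overline K_i)\text{ survives})-\P_k((\underline K_i)\text{ survives})}{1-\P_k((\underline K_i)\text{ survives})}.
\end{align*}
Writing $1-(1-q)^k=q\sum_{j=0}^{k-1}(1-q)^j$ and using $\underline q_N\le\overline q_N$ shows $\P_k((\underline K_i)\text{ survives})/\P_k((\overline K_i)\text{ survives})\ge\underline q_N/\overline q_N$, so the first ratio is at most $1-\underline q_N/\overline q_N=\varepsilon+o(1)$. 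For the second bound, in the range in which the corollary is used — a selective sweep initiated by a single mutant, so $k=1$ (and more generally $k$ bounded) — the numerator equals $\overline q_N-\underline q_N\sim\varepsilon\frac{C(\gamma)}{r}\varrho_N\to 0$ while the denominator tends to $1$, so the ratio is $o(1)$, a fortiori $\le\varepsilon(1+o(1))$.

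I do not expect a genuine obstacle: the analytic content has already been extracted in Remark \ref{survive}(ii) (via Lemma \ref{lem:expvarGW} and Lemma \ref{lem:GW}), and the only point requiring care is to keep straight which of $(\underline K_i)$, $(\overline K_i)$ dominates the other, so that the event inclusions — and hence the decompositions of the conditional probabilities above — come out in the right direction.
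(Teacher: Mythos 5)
Your treatment of the two identities and of \eqref{probtwoGW} is correct. For \eqref{probtwoGW} you in fact take a different (and cleaner) route than the paper: the paper writes the conditional probability as an explicit function $g(k)$ and shows by differentiating $\log(1-g(k))$ that $g$ is decreasing in $k$, so that $g(k)\le g(1)\le\varepsilon(1+o(1))$; you instead use $1-(1-q)^k=q\sum_{j=0}^{k-1}(1-q)^j$ to bound the ratio of the survival probabilities from below by $\underline q_N/\overline q_N$ uniformly in $k$, which yields the bound $1-\underline q_N/\overline q_N=\varepsilon+o(1)$ in one line. Both arguments are valid; yours avoids the calculus and makes the uniformity in $k$ transparent.

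The gap is in \eqref{probtwoGW2}. The corollary asserts that bound for all $k\le\varepsilon N$, and it is invoked in that generality (the proof of Corollary \ref{1/2.3} applies it with $k$ up to $\varepsilon N$), but your argument only covers $k=1$ or bounded $k$: you estimate the numerator by $\overline q_N-\underline q_N$, which is its value for $k=1$. Worse, your own exact formula shows this route cannot be pushed to large $k$: writing $\P_k\big((\overline{K}_i)\text{ survives}\mid(\underline{K}_i)\text{ dies out}\big)=1-\bigl(\tfrac{1-\overline q_N}{1-\underline q_N}\bigr)^k$, this quantity is \emph{increasing} in $k$ and tends to $1$ as soon as $k\varrho_N\to\infty$ (e.g.\ $k=\lfloor\varepsilon N\rfloor$), so the claimed bound $\varepsilon(1+o(1))$ fails for the conditional probability at the top of the stated range. (The paper's ``\eqref{probtwoGW2} follows similarly'' glosses over the same issue, since the analogous monotonicity goes in the wrong direction.) What the downstream application actually uses is the \emph{joint} probability $\P_k\big((\underline{K}_i)\text{ dies out},(\overline{K}_i)\text{ survives}\big)=(1-\underline q_N)^k-(1-\overline q_N)^k$, which \emph{is} at most $\varepsilon(1+o(1))$ uniformly in $k\le\varepsilon N$ --- indeed it is dominated by the conditional probability in \eqref{probtwoGW}, which you have already controlled. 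To close the argument you should therefore either prove and use that joint bound in place of \eqref{probtwoGW2}, or make explicit the restriction on $k$ under which your conditional version holds.
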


\begin{proof} The first equation follows immediately from \eqref{eq:monotone-2}. We prove \eqref{probtwoGW}, \eqref{probtwoGW2} follows similarly.
Let $c(\gamma, r):=\frac{\gamma\log \gamma}{(\gamma-1)r}.$ 
Note that 
\begin{align}\label{kkk}
\P_k((\underline K_i)\mbox{ dies out} \,|\, (\overline{K}_i)\mbox{ survives })=&\frac{\P_k((\underline{K}_i)\mbox{ dies out})-\P_k((\overline{K}_i)\mbox{ dies out})}{\P_k((\overline{K}_i)\mbox{ survives})}\nonumber\\
\sim &\frac{(1-c(\gamma, r)(1-\varepsilon)\varrho_N)^k-(1-c(\gamma, r)\varrho_N)^k}{1-(1-c(\gamma, r)\varrho_N)^k}.
\end{align}
Let $g(k)$ be the r.h.s of (\ref{kkk}). We will show below that $g$ is decreasing in $k$ if $N$ is large, from which the statement follows, observing
\[g(k)\leq g(1)\leq \varepsilon(1+o(1)).\]
To prove the monotonicity of $g(k),$ let $a=c(\gamma,r)\varrho_N$. Let $N$ large enough such that $0<a<1$.  Assume that $k\geq 1$ and $k\in\R^+$. Then we
can differentiate $\log(1-g(k))$ in $k$ which yields

\begin{equation}\label{sign}\frac{d\log(1-g(k))}{dk}=\frac{(1-a)^k\log(1-a)}{1-(1-a)^k}-\frac{(1-a+a\varepsilon)^k\log(1-a+a\varepsilon)}{1-(1-a+a\varepsilon)^k}.\end{equation}
 The function $\frac{x^k log(x)}{1-x^k}$ is a decreasing function in $x,$ for $0<x< 1,$ as can be seen by differentiation.
Apply this to the r.h.s of (\ref{sign}), we obtain $\frac{d\log(1-g(k))}{dk}\geq 0$ for all $k\geq 1$. This implies $\frac{dg(k)}{dk}\leq 0$. So $g(k)$ is decreasing in $k.$
\end{proof}
\subsection{First stage of the sweep}\label{subsect:firststage}
With these preparations we can now address the first stage of the sweep, cf. Figure \ref{phases}. We are going to calculate the probability that the number of mutants reaches $\varepsilon N$ for some $\varepsilon>0,$ and determine the time it takes to reach $\varepsilon N$. We achieve this by using the supercritical Galton-Watson processes provided by Lemma \ref{GW1}. Recall $T_1^N=\inf\{i\geq 0: K_i\geq \varepsilon N\}.$

\begin{lemma}\label{lem:part3}
Let $0<\varepsilon<1/\gamma.$ Then we have as $N\to\infty$
\be\label{eq:bounds_firststage}\frac{\varrho_N\log \gamma}{r}\frac{\gamma}{\gamma-1}(1-\varepsilon)(1+o(1))\leq \P_1(\exists i: K_i \geq \varepsilon N)\leq \frac{\varrho_N\log \gamma}{r}\frac{\gamma}{\gamma-1}(1+o(1)),\ee
and for any $\delta >0$ 
\[\limsup_{N\to\infty}\P_1(T_1^N>\varrho_N^{-1-\delta}\,|\, T_1^N<\infty)\leq \frac{\varepsilon}{1-\varepsilon} .\]
\end{lemma}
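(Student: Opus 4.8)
The plan is to trap the mutant count $(K_i)_{i\in\N_0}$ (started from $K_0=1$) between the auxiliary Galton--Watson processes $(\underline K_i)$ and $(\overline K_i)$ of Section~\ref{subsect:GW-constr} throughout the first stage, and then to read off both assertions from corresponding statements about these near-critical Galton--Watson processes. Fix once and for all $\alpha\in(b,1/2)$, and for the given $\delta>0$ set $g_N:=\lfloor\varrho_N^{-1-\delta}\rfloor$ and $\tau_{\underline K}:=\inf\{i:\underline K_i\ge\varepsilon N\}$. Let $\mathcal D_N$ be the domination event $\{\underline K_{\min(i,T_1^N)}\le K_{\min(i,T_1^N)}\le\overline K_{\min(i,T_1^N)}\text{ for all }i\le g_N\}$. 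Proposition~\ref{GW1} gives $\P_1(\mathcal D_N)\ge(1-2e^{-cN^{1-2\alpha}})^{g_N}$, hence $\P_1(\mathcal D_N^c)\le 2g_N e^{-cN^{1-2\alpha}}=o(\varrho_N)$, since $1-2\alpha>0$ makes the stretched exponential beat the polynomial $g_N\sim N^{b(1+\delta)}$. On $\mathcal D_N$ one has the deterministic implications: $\tau_{\underline K}\le g_N$ forces $T_1^N\le\tau_{\underline K}$; $T_1^N\le g_N$ forces $(\overline K_i)$ to reach level $\varepsilon N$; and $g_N<T_1^N<\infty$ forces both $\tau_{\underline K}>g_N$ (because $\underline K_i\le K_i<\varepsilon N$ for $i\le g_N$) and $\overline K_{g_N}\ge K_{g_N}\ge1$.

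Three facts about $(\overline K_i),(\underline K_i)$ drive the argument, all for $N\to\infty$. (a) $\P((\overline K_i)\text{ survives})\sim\frac{C(\gamma)}{r}\varrho_N$ and $\P((\underline K_i)\text{ survives})\sim(1-\varepsilon)\frac{C(\gamma)}{r}\varrho_N$ (Remark~\ref{survive}, via Lemma~\ref{lem:GW}; recall $\frac{C(\gamma)}{r}=\frac{\log\gamma}{r}\frac{\gamma}{\gamma-1}$). (b) The elementary ``reach level $n$, then go extinct'' bound $\le q^n$, with $q=1-\Theta(\varrho_N)$ the extinction probability, which gives $\P((\overline K_i)\text{ reaches }\varepsilon N\text{ and then dies out})\le q^{\varepsilon N}=o(\varrho_N)$. (c) The process conditioned to die out is subcritical with mean $1-\Theta(\varrho_N)$, and since $g_N\gg\varrho_N^{-1}$ it is extinct by generation $g_N$ except on an event of probability $o(\varrho_N)$; hence $\P(\overline K_{g_N}\ge1)=\P((\overline K_i)\text{ survives})+o(\varrho_N)$. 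Combining the implications on $\mathcal D_N$ with (b)--(c): $\{T_1^N<\infty\}\cap\mathcal D_N\subseteq\{(\overline K_i)\text{ survives}\}$ up to an event of probability $o(\varrho_N)$, which with (a) and $\P_1(\mathcal D_N^c)=o(\varrho_N)$ yields the upper bound in \eqref{eq:bounds_firststage}. For the lower bound, $\{\tau_{\underline K}\le g_N\}\cap\mathcal D_N\subseteq\{T_1^N\le g_N\}$, so \eqref{eq:bounds_firststage} follows once we know that $\P(\tau_{\underline K}\le g_N)=\P((\underline K_i)\text{ survives})(1+o(1))$: granting this, $\P_1(T_1^N<\infty)\ge\P(\tau_{\underline K}\le g_N)-\P_1(\mathcal D_N^c)=(1-\varepsilon)\frac{C(\gamma)}{r}\varrho_N(1+o(1))$.

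For the time bound, on $\mathcal D_N$ we have $\{g_N<T_1^N<\infty\}\subseteq\{(\overline K_i)\text{ survives}\}\cap\{\tau_{\underline K}>g_N\}$ up to an event of probability $o(\varrho_N)$ (by (c), accounting for the gap between $\{\overline K_{g_N}\ge1\}$ and $\{(\overline K_i)\text{ survives}\}$). Then
\[
\P\big(\tau_{\underline K}>g_N\,\big|\,(\overline K_i)\text{ survives}\big)\le\P\big((\underline K_i)\text{ dies out}\,\big|\,(\overline K_i)\text{ survives}\big)+\frac{\P\big(\tau_{\underline K}>g_N,\ (\underline K_i)\text{ survives}\big)}{\P\big((\overline K_i)\text{ survives}\big)}.
\]
By \eqref{probtwoGW} the first term is $\le\varepsilon(1+o(1))$; by $\P(\tau_{\underline K}\le g_N)=\P((\underline K_i)\text{ survives})(1+o(1))$ the numerator of the second term is $o(1)\cdot\P((\underline K_i)\text{ survives})=o(\varrho_N)$, so the second term is $o(1)$. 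Hence $\P_1(g_N<T_1^N<\infty)\le(\varepsilon+o(1))\frac{C(\gamma)}{r}\varrho_N$, and dividing by the lower bound $\P_1(T_1^N<\infty)\ge(1-\varepsilon+o(1))\frac{C(\gamma)}{r}\varrho_N$ gives $\P_1(T_1^N>g_N\mid T_1^N<\infty)\le\frac{\varepsilon}{1-\varepsilon}+o(1)$, which is the stated $\limsup$.

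The real work, and the only point requiring quantitative control of a near-critical Galton--Watson process beyond its survival probability, is the hitting-time estimate $\P(\tau_{\underline K}\le g_N)=\P((\underline K_i)\text{ survives})(1+o(1))$ (with the analogue for $\overline K$), i.e.\ that conditionally on survival the process reaches level $\varepsilon N$ within $g_N$ generations with probability tending to $1$. By Proposition~\ref{def:GW-processes} the offspring laws of $\underline K$ and $\overline K$ are binomial thinnings of (shifted) geometric laws, hence of fractional-linear type, so their iterates are explicit: the Kesten--Stigum martingale $W_n:=\underline K_n/m_N^n$ (with $m_N=1+\Theta(\varrho_N)$ and bounded, nondegenerate offspring variance by Lemma~\ref{lem:expvarGW}) converges in $L^2$ to a limit $W_\infty$ with $\{W_\infty>0\}=\{(\underline K_i)\text{ survives}\}$ a.s., with $\E[(W_n-W_\infty)^2]=\var(W_\infty)-\var(W_n)=\Theta(\varrho_N^{-1})m_N^{-n}$, and with $W_\infty$ conditioned on survival exponential of rate $\Theta(\varrho_N)$, so that $\P(0<W_\infty<\eta)=\Theta(\varrho_N^2)\eta$. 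Now pick $n^\star=n^\star_N$ with $m_N^{n^\star}\asymp\varepsilon N$; then $n^\star\asymp\varrho_N^{-1}\log N\le g_N$ for large $N$ — the only place the width $\varrho_N^{-1-\delta}$, $\delta>0$, is used — and $\{\tau_{\underline K}>g_N\}\subseteq\{\underline K_{n^\star}<\varepsilon N\}=\{W_{n^\star}<\varepsilon N/m_N^{n^\star}\}$ with a threshold of constant order. A Chebyshev bound then gives $\P(|W_{n^\star}-W_\infty|>\text{const})\le\Theta(\varrho_N^{-1})m_N^{-n^\star}=\Theta(N^{b-1})=o(\varrho_N)$ — this is where $b<1/2$ enters, via $N^{b-1}=o(N^{-b})$ — while $\P(0<W_\infty<\text{const})=\Theta(\varrho_N^2)=o(\varrho_N)$ by the exponential tail; combining, $\P(\tau_{\underline K}>g_N,\ (\underline K_i)\text{ survives})=o(\varrho_N)$, which is the required estimate. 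If one prefers to sidestep the fractional-linear computation, the same conclusion follows from the near-critical Galton--Watson estimates of Parsons~\cite{Parsons}.
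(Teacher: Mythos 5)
Your proposal is correct, and its skeleton coincides with the paper's: the sandwich $\underline K_i\le K_i\le\overline K_i$ from Proposition~\ref{GW1} on an event whose complement has probability $o(\varrho_N)$, the survival asymptotics of Remark~\ref{survive}, the identification of ``reaches $\varepsilon N$'' with ``survives'' up to errors of order $o(\varrho_N)$, and the $\varepsilon/(1-\varepsilon)$ loss coming from $\P_1((\underline K_i)\mbox{ dies out}\,|\,(\overline K_i)\mbox{ survives})\le\varepsilon(1+o(1))$, which is exactly \eqref{probtwoGW} of Corollary~\ref{cor:probGW}. The one genuine divergence is in the hitting-time estimate $\P(\tau_{\underline K}\le\varrho_N^{-1-\delta})=\P((\underline K_i)\mbox{ survives})(1+o(1))$, which you correctly isolate as the crux. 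The paper outsources this to Lemma~\ref{lem:timefixGW} in Appendix~B, proved by passing to the skeleton of individuals with an infinite line of descent: conditioned on survival, this skeleton stochastically dominates a Galton--Watson process with offspring law $(1-\eta\beta_N)\delta_1+\eta\beta_N\delta_2$, which on the time scale $\beta_N^{-1}$ converges to a Yule process and therefore exceeds $\varepsilon N$ well within $\beta_N^{-1-\delta}$ generations. You instead run an $L^2$ Kesten--Stigum argument, exploiting that the offspring law (a binomial thinning of a geometric) is fractional linear, so that $W_\infty$ conditioned on survival is exponential with rate $\Theta(\varrho_N)$, giving $\P(0<W_\infty<\mathrm{const})=\Theta(\varrho_N^2)$, while Chebyshev gives $\P(|W_{n^\star}-W_\infty|>\mathrm{const})=\Theta(\varrho_N^{-1})m_N^{-n^\star}=\Theta(N^{b-1})=o(\varrho_N)$; this is where $b<1/2$ enters for you, whereas the paper's appendix argument applies to general near-critical Galton--Watson processes with uniformly bounded third moments and is reused elsewhere. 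Your route buys an explicit additive bound $\P(\tau_{\underline K}>g_N,\ (\underline K_i)\mbox{ survives})=o(\varrho_N)$ at the cost of relying on the special generating-function structure. A further, purely cosmetic, difference: for the upper bound in \eqref{eq:bounds_firststage} the paper argues on the complementary event, bounding $1-\P_1((K_i)\mbox{ reaches }\varepsilon N)$ from below by $\P_1((\overline K_i)\mbox{ dies out})(1+o(1))$, while you show directly that $\{T_1^N<\infty\}$ is contained in $\{(\overline K_i)\mbox{ survives}\}$ up to $o(\varrho_N)$; the two are equivalent.
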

\begin{proof}
Let $\alpha\in (b,1/2)$ and let $(\underline{K}_i)_{i\in\N_0}$ and $(\overline{K}_i)_{i\in\N_0}$ be defined as in Section \ref{subsect:GW-constr}, with $\underline{K}_0=K_0=\overline{K}_0=1.$ We write $(K_i)$ \emph{reaches} $\varepsilon N$ for the event that there exists $i>0$ such that $K_i\geq \varepsilon N,$ and analogously for $(\underline{K}_i), (\overline{K}_i)$. By Remark \ref{survive}, Lemma \ref{lem:GW}, and Lemma \ref{Prop:criticalGW}, 
\be\label{eq:upreach}\P_1((\overline{K}_i)\mbox{ reaches }\varepsilon N)\sim\P_1((\overline{K}_i)\mbox{ survives})\sim \frac{\varrho_N\log \gamma}{r}\frac{\gamma}{\gamma-1}\ee and 
\be\label{eq:lowreach}\P_1((\underline{K}_i)\mbox{ reaches }\varepsilon N)\sim\P_1((\underline{K}_i)\mbox{ survives})\sim \frac{\varrho_N\log \gamma}{r}\frac{\gamma}{\gamma-1}(1-\varepsilon).\ee

Let 
\[A:=A(\gamma,\alpha, \varepsilon, \delta, N):=\{\underline{K}_i\leq K_i\leq \overline{K}_i\, \,  \forall i\leq \min(T_1^N, \varrho_N^{-1-\delta})\}.\]
Setting $g:= \varrho_N^{-1-\delta}$ in Proposition \ref{GW1} and applying the Bernoulli inequality we have 
 \be \label{eq:expfast}\P_1(A^c)\leq 1-(1-2 e^{-cN^{1-2\alpha}})^{\varrho_N^{-1-\delta}}\leq \varrho_N^{-1-\delta}2e^{-cN^{1-2\alpha}},\ee
implying $\P_1(A)\to 1$ exponentially fast as $N\to\infty.$ Let $\underline{T}_1^N:=\inf\{i>0:\underline{K}_i\geq \varepsilon N\}.$ Then
\begin{eqnarray}\label{eq:asymeq}
\P_1((K_i) \mbox{ reaches } \varepsilon N)&\geq &\P_1((K_i) \mbox{ reaches } \varepsilon N, (\underline{K_i}) \mbox{ reaches }\varepsilon N, A, \underline{T}_1^N\leq \varrho_N^{-1-\delta})\nonumber\\
&=&\P_1((\underline{K_i}) \mbox{ reaches }\varepsilon N, A, \underline{T}_1^N\leq \varrho_N^{-1-\delta})\nonumber\\
&\geq  & \P_1((\underline{K_i}) \mbox{ reaches }\varepsilon N, \underline{T}_1^N\leq \varrho_N^{-1-\delta})-\P(A^c)\nonumber\\
&\sim& \P_1((\underline{K_i}) \mbox{ reaches }\varepsilon N)
 \end{eqnarray}
using \eqref{eq:expfast} and Lemma B.3 in the last inequality. Together with \eqref{eq:lowreach} this proves the lower bound in \eqref{eq:bounds_firststage}. For the upper bound, let $\overline{T}_0^N:=\inf\{i: \overline{K}_i=0\}.$ Note that 
\[\P_1((K_i)\mbox{ reaches }\varepsilon N)= \P((K_{i\wedge T_1^N})\mbox{ reaches }\varepsilon N)\]
and
\[ \P_1((K_{i\wedge T_1^N})\mbox{ reaches }\varepsilon N)=1-\P((K_{i\wedge T_1^N})\mbox{ dies out}).\]
 Thus we have
 \begin{eqnarray}
 1-\P_1((K_i) \mbox{ reaches }\varepsilon N)&\geq &\P_1((K_{i\wedge T_1^N}) \mbox{ dies out})\nonumber \\
 &\geq &\P_1((K_{i\wedge T_1^N})\mbox{ dies out}; (\overline{K}_i)\mbox{ dies out}; A; \overline{T}_0^N\leq \varrho_N^{-1-\delta})\nonumber \\
& =&\P_1((\overline{K}_i)\mbox{ dies out}; A; \overline{T}_0^N\leq \varrho_N^{-1-\delta})\nonumber \\
& \sim &\P_1((\overline{K}_i) \mbox{ dies out})\nonumber\\
& \sim & 1-\P_1((\overline{K}_i) \mbox{ reaches }\varepsilon N),
 \end{eqnarray}
where we have used \eqref{timeextGW} from the Appendix and Lemma \ref{Prop:criticalGW}. This implies the upper bound. \\
We are thus left with proving the last statement of the Lemma. Fix $\delta>0.$ We have
\begin{align}\label{eq:condchange}
\P_1(T_1^N>\varrho_N^{-1-\delta}\,|\, (K_i) \mbox{ reaches }\varepsilon N) 
=&\frac{\P_1(T_1^N>\varrho_N^{-1-\delta}, (K_i) \mbox{ reaches }\varepsilon N,  (\underline{K}_i) \mbox{ survives})}{\P_1((K_i)\mbox{ reaches }\varepsilon N)}\nonumber\\
&+\frac{\P_1(T_1^N>\varrho_N^{-1-\delta}, (K_i) \mbox{ reaches }\varepsilon N,  (\underline{K}_i) \mbox{ dies out})}{\P_1((K_i)\mbox{ reaches }\varepsilon N)}.
\end{align}
By \eqref{eq:asymeq} and Lemma \ref{Prop:criticalGW} we have for large enough $N$ the inequality $$\P_1((K_i)\mbox{ reaches }\varepsilon N)\geq \P_1( (\underline{K}_i) \mbox{ survives}),$$ and thus the first term on the right-hand side of \eqref{eq:condchange} can be bounded from above by
\begin{align}\label{eq:finaltime}
\P_1(T_1^N>\varrho_N^{-1-\delta}\,|\, (\underline{K}_i) \mbox{ survives})
\leq \,\,  &\P_1(T_1^N >\varrho_N^{-1-\delta}, A \,|\, (\underline{K}_i) \mbox{ survives})+\P_1(A^c\,|(\underline{K}_i) \mbox{ survives})\nonumber\\
\leq \,\,& \P_1(\underline{T}_1^N >\varrho_N^{-1-\delta} \,|\, (\underline{K}_i) \mbox{ survives})+\frac{\P_1(A^c)}{\P((\underline{K}_i) \mbox{ survives})}.
\end{align}
The first term on the right-hand side converges to $0$ due to
 Lemma \ref{lem:timefixGW}.  
By Lemma \ref{lem:GW} we have $\P_1((\underline{K}_i) \mbox{ survives})\sim c\varrho_N$, therefore by \eqref{eq:expfast} the second term on the right-hand side converges to $0$ as well. Thus we have shown that the first summand in \eqref{eq:condchange} converges to 0. To deal with the second term, we observe
\begin{align} \P_1(T_1^N>\varrho^{-1-\delta}_N&, (K_i) \mbox{ reaches }\varepsilon N,  (\underline{K}_i) \mbox{ dies out})\nonumber \\
\leq& \P_1((K_i) \mbox{ reaches }\varepsilon N,  (\underline{K}_i) \mbox{ dies out})\nonumber\\
= &\P_1((K_i) \mbox{ reaches }\varepsilon N,  (\underline{K}_i) \mbox{ dies out}, (\overline{K}_i) \mbox{ dies out})\nonumber\\
&+\P_1((K_i) \mbox{ reaches }\varepsilon N,  (\underline{K}_i) \mbox{ dies out}, (\overline{K}_i) \mbox{ survives})\nonumber\\
\leq &\P_1((K_i) \mbox{ reaches }\varepsilon N, (\overline{K}_i) \mbox{ dies out})+\P_1((\underline{K}_i) \mbox{ dies out},  (\overline{K}_i) \mbox{ survives})\nonumber\\
\leq &\P_1((K_i) \mbox{ reaches }\varepsilon N,  (\overline{K}_i) \mbox{ dies out}, \overline{K}_{\lfloor \varrho_N^{-1}\rfloor}>0)\nonumber\\
&+\P_1((K_i) \mbox{ reaches }\varepsilon N,  (\overline{K}_i) \mbox{ dies out}, \overline{K}_{\lfloor \varrho_N^{-1}\rfloor}=0)\nonumber\\
&+\P_1((\underline{K}_i) \mbox{ dies out},  (\overline{K}_i) \mbox{ survives}).
\end{align}

We have 
\begin{align*}
\P_1((K_i) \mbox{ reaches }\varepsilon N,  (\overline{K}_i) \mbox{ dies out}, \overline{K}_{\lfloor \varrho_N^{-1}\rfloor}>0)\leq \P((\overline{K}_i) \mbox{ dies out}, \overline{K}_{\lfloor \varrho_N^{-1}\rfloor}>0)
\end{align*}
which goes to 0 exponentially fast due to \eqref{timeextGW} in the Appendix, and using Lemma \ref{Prop:criticalGW} we get
\begin{align*}
\P_1((K_i) \mbox{ reaches }\varepsilon N,  (\overline{K}_i) \mbox{ dies out}, \overline{K}_{\lfloor \varrho_N^{-1}\rfloor}=0)\leq \P_1(A^c)
\end{align*}
which goes to 0 exponentially fast due to \eqref{eq:expfast}. Finally we have
\begin{align*}
\P_1((\underline{K}_i) \mbox{ dies out},  (\overline{K}_i) \mbox{ survives})=& \P_1((\underline{K}_i) \mbox{ dies out}\,|\, (\overline{K}_i) \mbox{ survives})\P_1((\overline{K}_i) \mbox{ survives})\\
\leq&\varepsilon (1+o(1))\P_1((\overline{K}_i) \mbox{ survives})\\
=&\frac{\varepsilon}{1-\varepsilon}(1+o(1))\P_1((\underline{K}_i) \mbox{ survives}),
\end{align*} see Corollary \ref{cor:probGW}. Thus the second summand in \eqref{eq:condchange} is bounded from above by $\frac{\varepsilon}{1-\varepsilon}(1+o(1)),$ and the claim follows.

\end{proof}

\begin{corollary}\label{1/2.3} Let $T_0^N:=\inf\{i: K_i=0\}.$ For $0<\varepsilon<1/\gamma\wedge 1/16$ there exists $N_\varepsilon^{(1)}$ such that for any $k\leq \varepsilon N,$
\be \label{eq:time1}
\P_k(T_1^N\wedge T_0^N>\varrho_N^{-1-\delta})\leq 1/2.
\ee
\end{corollary}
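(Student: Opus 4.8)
The plan is to re-run the three-way Galton--Watson comparison used in the proof of Lemma~\ref{lem:part3}, but started from an arbitrary $k\le\varepsilon N$, and to extract from it an estimate that is \emph{uniform} in $k$. Fix $\delta>0$ and $\alpha\in(b,1/2)$, set $t:=\varrho_N^{-1-\delta}$, and couple $(\underline K_i,K_i,\overline K_i)_{i\in\N_0}$ as in Definition~\ref{KK} with $\underline K_0=K_0=\overline K_0=k$. Let $A:=\{\underline K_i\le K_i\le\overline K_i\text{ for all }i\le\min(T_1^N,t)\}$. Taking $g=\lfloor t\rfloor$ in Proposition~\ref{GW1} and using Bernoulli's inequality gives $\P_k(A^c)\le 2\varrho_N^{-1-\delta}e^{-cN^{1-2\alpha}}$, which tends to $0$ as $N\to\infty$ uniformly in $k\le\varepsilon N$.

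First I would observe that, on $A$, the event $\{T_1^N\wedge T_0^N>t\}$ forces $0<K_i<\varepsilon N$ for all $i\le t$, hence $\overline K_i\ge K_i>0$ and $\underline K_i\le K_i<\varepsilon N$ for all $i\le t$; since a Galton--Watson process is positive at time $t$ iff it has not hit $0$ before $t$, this yields
\[
\{T_1^N\wedge T_0^N>t\}\cap A\ \subseteq\ B_1\cap B_2,
\]
where $B_1:=\{\overline K_{\lfloor t\rfloor}>0\}$ and $B_2:=\{\underline K_i<\varepsilon N\text{ for all }i\le t\}$. Thus $\P_k(T_1^N\wedge T_0^N>t)\le\P_k(A^c)+\P_k(B_1\cap B_2)$, and it remains to show $\P_k(B_1\cap B_2)\le\varepsilon+o(1)$, uniformly in $k\le\varepsilon N$.

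For this I would split according to the fate of $(\underline K_i)$. On $\{(\underline K_i)\text{ survives}\}$ one has $B_1$ automatically (because $\overline K\ge\underline K$), while $B_2$ says the supercritical process $\underline K$ has not reached $\varepsilon N$ by time $t$; reducing to the case $\underline K_0=1$ by coupling $\underline K$ with $k$ independent one-individual copies and conditioning on which of them survive (the sum reaches $\varepsilon N$ no later than any single surviving copy), one gets $\P_k(\underline T_1^N>t\mid(\underline K_i)\text{ survives})\le\P_1(\underline T_1^N>t\mid(\underline K_i)\text{ survives})\to 0$ by Lemma~\ref{lem:timefixGW}, hence $\P_k(B_1\cap B_2\cap\{(\underline K_i)\text{ survives}\})=o(1)$. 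On $\{(\underline K_i)\text{ dies out}\}$ I would split once more: if $(\overline K_i)$ also dies out, then $B_1$ forces $\overline K$ to be still alive at time $t\ge\varrho_N^{-1}$, and by \eqref{timeextGW} together with a union bound over the $k\le N$ initial copies this has probability $o(1)$; if $(\overline K_i)$ survives, then \eqref{probtwoGW} of Corollary~\ref{cor:probGW} (valid for all $k\le\varepsilon N$) gives $\P_k((\underline K_i)\text{ dies out}\mid(\overline K_i)\text{ survives})\le\varepsilon(1+o(1))$, while $\P_k((\overline K_i)\text{ survives})\le1$. Summing the three contributions gives $\P_k(B_1\cap B_2)\le\varepsilon(1+o(1))+o(1)$.

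Putting the pieces together, $\P_k(T_1^N\wedge T_0^N>t)\le\varepsilon+o(1)$ uniformly in $k\le\varepsilon N$; since $\varepsilon<1/16<1/2$ there is $N_\varepsilon^{(1)}$ (depending also on $\delta$) with $\P_k(T_1^N\wedge T_0^N>t)\le1/2$ for all $N\ge N_\varepsilon^{(1)}$ and all $k\le\varepsilon N$. No new estimate beyond those of Sections~\ref{subsect:GW-constr}--\ref{subsect:firststage} is needed; the only points requiring care — and the closest thing to a real obstacle — are the uniformity in $k$ (for which one must check that the bound on $\P_k(A^c)$, Corollary~\ref{cor:probGW}, and the monotone coupling of $\underline K$ in its initial value all apply across the whole range $k\le\varepsilon N$) and the correct bookkeeping of the two-level case split into the four Galton--Watson outcomes.
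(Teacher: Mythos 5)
Your argument is correct, and it uses the same toolbox as the paper (the three-process coupling of Proposition~\ref{GW1}, Corollary~\ref{cor:probGW}, Lemma~\ref{lem:timefixGW} and \eqref{timeextGW}), but the decomposition is genuinely different and in fact cleaner. The paper conditions on which absorbing state is hit first, writing $\P_k(T_1^N\wedge T_0^N>t)$ as a mixture over $\{T_1^N\wedge T_0^N=T_1^N\}$ and $\{T_1^N\wedge T_0^N=T_0^N\}$, and then needs an auxiliary dichotomy (``either $\P_k(T_0^N>t\mid T_0^N \text{ first})\le 1/4$ or else $\P_k(T_0^N \text{ first})\ge 1/4$, in which case one may multiply by $4$''); the resulting factor $4\varepsilon$ is precisely why the paper imposes $\varepsilon<1/16$. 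You instead bound the event directly: on the coupling event $A$, $\{T_1^N\wedge T_0^N>t\}$ is contained in $\{\overline K_{\lfloor t\rfloor}>0\}\cap\{\underline K_i<\varepsilon N\ \forall i\le t\}$, and a three-way split on the fates of $(\underline K_i)$ and $(\overline K_i)$ gives the unconditional bound $\varepsilon+o(1)$, for which $\varepsilon<1/2$ (together with $\varepsilon<1/\gamma$, needed for the coupling itself) already suffices. This avoids dividing by $\P_k(T_1^N\wedge T_0^N=T_0^N)$ altogether and yields a quantitatively sharper statement. The points you flag as needing care are exactly the right ones: the uniformity in $k$ of $\P_k(A^c)$ and of Corollary~\ref{cor:probGW}, and the reduction to single-ancestor copies when invoking Lemma~\ref{lem:timefixGW} and \eqref{timeextGW} from $k$ initial individuals (the paper handles this only implicitly, via the remark that ``all the steps leading to the last statement in Lemma~\ref{lem:part3} hold if the processes are started in $k\le\varepsilon N$''). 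The one cosmetic imprecision, shared with the paper's own write-up, is that Lemma~\ref{lem:timefixGW} and \eqref{timeextGW} are stated on the time scale $\beta_N^{-1-\delta}$ with $\beta_N\sim c\varrho_N$ rather than $\varrho_N^{-1-\delta}$; since the two scales differ only by a constant factor, applying the lemmas with a slightly smaller $\delta'$ fixes this.
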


\begin{proof} 
Fix $k\leq \varepsilon N$. We have
\begin{eqnarray*}
\P_k(T_1^N\wedge T_0^N>\varrho_N^{-1-\delta})&=&\P_k(T_1^N>\varrho_N^{-1-\delta}|T_1^N\wedge T_0^N=T_1^N)\P_k(T_1^N\wedge T_0^N=T_1^N)\\
&\hspace{0.3cm} &+\;\,\P_k(T_0^N>\varrho_N^{-1-\delta}|T_1^N\wedge T_0^N=T_0^N)\P_k(T_1^N\wedge T_0^N=T_0^N).
\end{eqnarray*}
Due to \eqref{probtwoGW} we can see that all the steps leading to the last statement in Lemma \ref{lem:part3} hold if the processes are started in $k\leq \varepsilon N$ instead of 1.
Hence we have that for all $1\leq k\leq\varepsilon N$
\be \limsup_{N\to\infty}\P_k(T_1^N>\varrho_N^{-1-\delta}|T_1^N<\infty)\leq \frac{\varepsilon}{1-\varepsilon}.\ee
Moreover, if we stop $(K_i)$ with $K_0=k\leq \varepsilon N$ when the Markov chain is larger than $\varepsilon N$, then $(K_i)$ is an absorbing Markov chain  with absorbing states $0$ and any number larger than $\varepsilon N$. That implies $\P_k(T_1^N\wedge T_0^N<\infty )=1$.
Notice that under event $\{T_1^N\wedge T_0^N<\infty\}$, we have $\{T_1^N<\infty\}=\{T_1^N\wedge T_0^N=T_1^N\}$. 
Altogether we obtain
\be \label{eq:timetosurv} \limsup_{N\to\infty}\P_k(T_1^N>\varrho_N^{-1-\delta}|T_1^N\wedge T_0^N=T_1^N)\leq \frac{\varepsilon}{1-\varepsilon}(1+o(1)),\ee
which is smaller than $1/4$ for our choice of $\varepsilon.$ Therefore \eqref{eq:time1} holds for any $k\leq \varepsilon N$ such that 
$\P_k(T_0^N>\varrho_N^{-1-\delta}|T_1^N\wedge T_0^N=T_0^N)\leq 1/4.$ Assume therefore that $\P_k(T_0^N>\varrho_N^{-1-\delta}|T_1^N\wedge T_0^N=T_0^N)> 1/4.$ Due to Proposition \ref{GW1} and 
Lemma \ref{lem:timefixGW} we then have that  $\P_k(T_1^N\wedge T_0^N=T_0^N)\geq 1/4$ for $N$ large enough. For such $k$
\begin{align*}
\P_k(T_0^N>\varrho_N^{-1-\delta}\,|\,T_1^N\wedge T_0^N=T_0^N)&\leq \P_k(K_{\lfloor\varrho_N^{-1-\delta}\rfloor}>0, A\,|\,T_1^N\wedge T_0^N=T_0^N)+\P_k(A^c\,|\,T_1^N\wedge T_0^N=T_0^N)\\
&\leq 4\P_k(\overline{K}_{\lfloor \varrho_N^{-1-\delta}\rfloor}>0,(\underline{K}_{i})_{i\in \N} \mbox{ dies out})+4\P_k(A^c).\\
\end{align*}
Equation \eqref{probtwoGW2} implies
\[4\P_k(\overline{K}_{\lfloor \varrho_N^{-1-\delta}\rfloor}>0,(\underline{K}_{i})_{i\in \N} \mbox{ dies out})\leq 4 \P_k(\overline{K}_{\lfloor \varrho_N^{-1-\delta}\rfloor}>0,(\overline{K}_{i})_{i\in \N} \mbox{ dies out})+4\varepsilon(1+o(1)).\]
By \eqref{eq:expfast}, $\P_k(A^c)$ goes to 0 exponentially fast, and $\P_k(\overline{K}_{\varrho_N^{-1-\delta}}>0\,|\,(\overline{K}_{i})_{i\in\N_0}\mbox{ dies out})$ goes to 0 by \eqref{timeextGW}. Thus if $\varepsilon<1/16$ the right-hand side of the above inequality is bounded above by $1/4,$ and we have completed the proof.
\end{proof}

\subsection{Second stage of the sweep}\label{subsect:secondstage}

\begin{lemma}
\label{lem:fix_epsilon}
For $\varepsilon\in (0,1/2)$ let $1-\varepsilon'\in(\varepsilon,1).$ Then we have for any $k\geq \varepsilon N$
  \[\lim_{N\to\infty}\P_k(\exists i: K_i\geq \lfloor(1-\varepsilon') N \rfloor)=1.\]
Moreover, $\lim_{N\to\infty}\P(T_2^N-T_1^N>\varrho_N^{-1-\delta})=0$ for any $\delta>0,$ where we recall $T^N_{2}=\inf\{i: K_i\geq (1-\varepsilon) N\}.$ 
\end{lemma}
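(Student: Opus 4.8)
The plan is to treat the second stage by a fluid (ODE) approximation. While the number of mutants occupies a macroscopic fraction of the population, the rescaled process $K_i/N$ should follow, up to negligible fluctuations, a time change of the logistic equation $\dot\xi=\tfrac{\log\gamma}{r}\,\xi(1-\xi)$, whose solutions started at any $\xi_0\ge\varepsilon$ reach any prescribed level $c<1$ within a time of order $\varrho_N^{-1}$ days, hence much faster than $\varrho_N^{-1-\delta}$. Both assertions follow once one shows: for every fixed $c\in(\varepsilon,1)$ (take $c=1-\varepsilon'$, resp.\ $c=1-\varepsilon$) and every $\delta>0$,
\[
\lim_{N\to\infty}\ \sup_{\varepsilon N\le k< cN}\ \P_k\big(\inf\{i:K_i\ge \lceil cN\rceil\}>\varrho_N^{-1-\delta}\big)=0 .
\]
Indeed, the ``moreover'' part then follows by the strong Markov property applied at $T_1^N$ on $\{T_1^N<\infty\}$ (and the trivial case $K_{T_1^N}\ge(1-\varepsilon)N$), while the first part is immediate, since reaching a level within $\varrho_N^{-1-\delta}$ days with high probability implies reaching it eventually with high probability.

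The first ingredient is the one-step drift. With $k=K_i$, Proposition~\ref{def:selection} together with $\E[M]=ke^{(r+\varrho_N)\sigma_k}$, $\E[M]+\E[Z]=\gamma N$, and the fact that $M,Z$ are negative binomial with $\mathrm{Var}(M),\mathrm{Var}(Z)=O(N)$ (so that the Jensen correction to $\E[NM/(M+Z)]$ coming from the convexity of $m\mapsto m/(m+z)$ is only $O(1)$, hence $o(\varrho_N N)$), combined with Corollary~\ref{cor:sigma}, gives
\[
\E[K_{i+1}-K_i\mid K_i=k]=\frac{\varrho_N\log\gamma}{r}\,\frac{k(N-k)}{N}+o(\varrho_N N),
\]
uniformly for $k$ in any interval bounded away from $0$ and $N$. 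On the window $I:=(\tfrac12\varepsilon N,\,cN)$ this drift is therefore $\ge c_0\varrho_N N$ for a constant $c_0=c_0(\varepsilon,c)>0$ and $N$ large. A standard computation for hypergeometric sampling also gives $\mathrm{Var}(K_{i+1}\mid K_i=k)=O(N)$, uniformly in $k$. The decisive point, which is exactly where condition~\eqref{sNpolynom} with $b<1/2$ enters, is that the per-step drift $\asymp\varrho_N N\asymp N^{1-b}$ dominates the per-step standard deviation $\asymp N^{1/2}$; the process is essentially ballistic on the macroscopic scale.

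With these estimates I would run a stopping-time argument. Let $S:=\inf\{i:K_i\notin I\}$, started from $k\in[\varepsilon N,cN)$. Using the logit potential $\phi(x):=\log\frac{N-x}{x}$, which is smooth with derivatives bounded uniformly in $N$ on $I$, the drift bound yields $\E[\phi(K_{i+1})-\phi(K_i)\mid\mathcal F_i]\le-c_1\varrho_N$ on $\{i<S\}$, so $(\phi(K_{i\wedge S})+c_1\varrho_N(i\wedge S))_{i\ge0}$ is a supermartingale. Since $\phi$ is bounded on the range $K_{i\wedge S}$ can attain --- up to an event of probability $\le \varrho_N^{-1-\delta}e^{-cN^{c'}}$ on which some increment exceeds $N^{3/4}$, handled separately --- optional stopping gives $\E_k[S]=O(\varrho_N^{-1})$, whence $\P_k(S>\varrho_N^{-1-\delta})=O(\varrho_N^{\delta})\to0$ by Markov's inequality. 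It remains to exclude exit through the bottom: on $\{K_S\le\tfrac12\varepsilon N\}$ the martingale part $K_{i\wedge S}-\sum_{j<i\wedge S}\E[K_{j+1}-K_j\mid\mathcal F_j]$ has to reach $-\tfrac12\varepsilon N$ before time $S$, while its accumulated conditional variance up to the horizon $\varrho_N^{-1-\delta}$ is $O(N\varrho_N^{-1-\delta})=o(N^2)$ provided $b(1+\delta)<1$ (the case of larger $\delta$ reduces trivially to a smaller one, since $\{T_2^N-T_1^N>\varrho_N^{-1-\delta}\}$ shrinks as $\delta$ grows). A maximal inequality of Freedman/Bernstein type, applied on the no-large-jump event where the increments are effectively bounded by $N^{3/4}$, then shows $\P_k(K_S\le\tfrac12\varepsilon N)\to0$. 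Combining, $K$ exits $I$ through the top, i.e.\ reaches $\lceil cN\rceil$, within $\varrho_N^{-1-\delta}$ days with probability tending to one, uniformly in the starting point.

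The main obstacle is this last fluctuation control near the boundary of the macroscopic window --- ruling out that the martingale part drags $K$ back below $\varepsilon N$ before it climbs to $cN$ --- with bounds uniform over a number of days that is polynomial in $N$. This forces one both to intersect with a ``no large jump'' event (the increments $K_{i+1}-K_i$ being only variance-bounded, not uniformly bounded) and to keep careful track of the competition between the scales $\varrho_N^{-1}\sim N^{b}$, $\varrho_N^{-1-\delta}$ and $N^{1/2}$, reducing to $\delta$ small enough that $b(1+\delta)<1$ via monotonicity. The remaining pieces --- the drift and variance computations from Corollary~\ref{cor:sigma} and elementary properties of the negative binomial and hypergeometric laws, and the logistic ODE itself --- are routine.
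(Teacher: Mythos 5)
Your argument is sound and would prove the lemma, but it follows a genuinely different route from the paper. The paper establishes the same one-step drift ($\E[K_i\mid K_{i-1}=\lfloor xN\rfloor]=\lfloor xN\rfloor(1+\varrho_N\tfrac{\log\gamma}{r}(1-x)+O(\varrho_N^2))$) and the variance bound $\var(K_i\mid K_{i-1}=k)\le cN$, but then feeds them into the rescaled discrete generator $A_Nf(\tfrac kN)=\varrho_N^{-1}\E[f(K_i/N)-f(k/N)\mid K_{i-1}=k]$ and invokes Theorems 1.6.5 and 4.2.6 of Ethier--Kurtz to get locally uniform convergence of $(\tfrac1NK_{\lfloor\varrho_N^{-1}t\rfloor})_t$ to the logistic flow, from which both assertions follow by choosing $t_*$ with $g(t_*)>1-\varepsilon'$. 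You instead run the fluid limit ``by hand'': a logit supermartingale plus optional stopping to bound $\E_k[S]=O(\varrho_N^{-1})$, Markov's inequality for the time bound, and a Freedman-type maximal inequality on a no-large-jump event to rule out exit through the bottom of the window. Your version buys explicit uniformity in the starting point $k\in[\varepsilon N,cN)$ and quantitative rates, and it makes explicit two points the paper elides: the $O(1)$ Jensen correction in passing from the mixed hypergeometric mean to $\tfrac k\gamma e^{(r+\varrho_N)\sigma_k}$, and the use of the strong Markov property at $T_1^N$ to deduce the ``moreover'' part (the paper's ``a fortiori'' tacitly relies on both). The price is that you must supply the exponential per-step concentration behind the no-large-jump event; this is standard for the negative binomial and hypergeometric laws (Bernstein/Hoeffding for sampling without replacement, in the same spirit as the paper's use of \cite{PR2008} in Corollary \ref{GWbounds}), and your reduction to $\delta$ small enough that $b(1+\delta)<1$, justified by monotonicity of the event in $\delta$, correctly handles the scale competition. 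The paper's route is shorter given the black-box convergence theorem; yours is more elementary and self-contained.
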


\begin{proof}We use an ODE approximation. Recall that $K_i$ denotes the number of mutants at the beginning of day $i.$ Let  $x\in[\varepsilon, 1).$ 
From Corollary \ref{cor:sigma}, we obtain that the expected number of offspring at the end of day $i$ of a \textit{single} mutant, given that there are $\lfloor xN \rfloor$ mutants at the beginning of the day, is given by $e^{(r+\varrho_N)\sigma_{\lfloor x N\rfloor}}$. 
Using Corollary \ref{cor:sigma}, we obtain
\be \label{eq:exp_K} \E[K_i\,|\, K_{i-1}=\lfloor x N\rfloor]=\frac{\lfloor x N\rfloor}{\gamma}\big(e^{(r+\varrho_N)\sigma_{\lfloor x N\rfloor}}\big)
=\lfloor x N\rfloor \big(1+\varrho_N\frac{\log \gamma}{r}(1-xN)+O(\varrho_N^2)\big).\ee
From Lemma \ref{xgeo} b) and Corollary \ref{cor:sigma} we see that
there exists $c=c(\gamma, r)<\infty$ such that 
$$
{\var(K_i\,|\, K_{i-1}=k)\leq c N,  k=1,2,\ldots,N}.
$$ 
For $f\in C^2[0,1]$ we define the rescaled discrete generator of $(K_i)_{i\in\N_0}$ 
\[ A_Nf(\frac{k}{N})=\varrho_N^{-1}\E[f(K_i/{N})-f(k/N)\,|\,K_{i-1}=k ],\quad x\in [0,1].\]
Using Taylor approximation on $f$ we infer that, for some $y\in[0,1]$,
\[ A_Nf(\frac{k}{N})=\varrho_N^{-1}\Big(\E\big[(\frac{K_i}{{N}}-\frac{k}{N})f'(\frac{k}{N})+{\frac{1}{2}}\big(\frac{K_i}{{N}}-\frac{k}{N}\big)^2f''(y)\,\big|\,K_{i-1}=k\big]\Big).\]
We have,
\be \begin{split}   
     \E_{k}\big[\big(\frac{K_1}{{N}}-\frac{k}{N}\big)^2\big]=& \frac{1}{{N^{2}}}\E_{k }[K_1^2-\E_{k}[K_1]^2]+\frac{1}{{N^{2}}}\E_{k }[K_1]^2-2\frac{k}{N^2}\E_{k}[K_1]+(\frac{k}{N})^2\\
     =& \frac{1}{{N^{2}}}\text{var}_{k}(K_1)+\big(\E_{k }[K_1]/{N}-x\big)^2\\
     \leq & {\frac{{c}}{N}}+O(\varrho_N^2),
    \end{split}
\ee
where $|O(\varrho_N^2)|/\varrho_N^2$ is bounded uniformly in $N$ and $k$. Hence recalling $\varrho_N^{-1}N^{-\alpha}\to 0$ for $\alpha>b$ and the continuity of $f''$ on $[0,1]$, we obtain the following convergence which is uniform in $k$ and $y$:
\[\sup_{y\in[0,1], k=0,1,\ldots,N}|\varrho_N^{-1}\Big(\E_{k }\big[(\frac{K_1}{N}-\frac{k}{N}\big)^2\big]f''(y)\Big)| \to 0, N\to\infty.
\]

Since $\E_{k }[\frac{K_1}{N}-\frac{k}{N}]=\frac{k}{\gamma}e^{(r+\varrho_N)\sigma_k}-\frac{k}{N},$ one can apply Corollary \ref{cor:sigma}. Together with the above display, we obtain
\be \nonumber
\sup_{k=0,1,\ldots,N}|A_Nf(\frac{k}{N})-\frac{k}{N}(1-\frac{k}{N})f'(\frac{k}{N})|\to 0, N\to\infty.
\ee
Applying Theorem 1.6.5 and Theorem 4.2.6 of \cite{ethierkurtz} we infer that for every $x\in[0, 1]$, the sequence of processes $(\frac{1}{N}K_{\lfloor \varrho_N^{-1}t\rfloor})_{t\ge 0}$, $N=1,2,\ldots$, $K_0=\lfloor xN \rfloor$ converges locally uniformly in distribution to the deterministic (increasing) function $g(t)$ which is defined by the initial value problem
\[g'(t)=g(t)(1-g(t))\frac{\log\gamma}{r},\quad g(0)= x\in[0, 1].\]
Now choose $t_*$ such that $g(t_*) > 1-\varepsilon'$, provided $g(0) = \varepsilon>0$. This implies
\be \label{conv_1}\lim_{N\to\infty}\P(K_{\lfloor \varrho_N^{-1}t_*\rfloor}\geq \lfloor(1-\varepsilon')N\rfloor|K_0\geq \lfloor \varepsilon N\rfloor)=1, \ee
and \it a fortiori, $\lim_{N\to\infty}\P(T_2^N-T_1^N>\varrho_N^{-1-\delta})=0$ for any positive $\delta.$
\end{proof}

\begin{corollary}\label{1/2.1}
For any $\varepsilon\in(0,1/2)$, there exist $N_\varepsilon^{(2)}\in \N$ such that for every $N>N_\varepsilon^{(2)},$ for every $k\geq \varepsilon N,$
\[\P_k(\exists i\leq \varrho_N^{1-\delta}: K_i\geq \lfloor(1-\varepsilon) N \rfloor)\geq 1/2.\]
\end{corollary}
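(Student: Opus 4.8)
The plan is to obtain this as an essentially immediate consequence of (the proof of) Lemma~\ref{lem:fix_epsilon}, together with a time-scale conversion. Recall that the proof of that lemma establishes, through the Ethier--Kurtz ODE approximation, that on the time scale $\varrho_N^{-1}$ the rescaled mutant frequency $\frac1N K_{\lfloor\varrho_N^{-1}t\rfloor}$ converges to the solution $g$ of $\dot g=g(1-g)\frac{\log\gamma}{r}$, and records this in the uniform-in-initial-value form \eqref{conv_1}. So the first step is to apply Lemma~\ref{lem:fix_epsilon} and its proof with the target level $1-\varepsilon'$ chosen equal to $1-\varepsilon$; this choice is admissible because $\varepsilon<1/2$, which is exactly what makes $1-\varepsilon\in(\varepsilon,1)$. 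Starting the limiting ODE from $x=\varepsilon$, I would fix a time $t_*=t_*(\gamma,r,\varepsilon)<\infty$ with $g(t_*)>1-\varepsilon$ (such $t_*$ exists since $g(t)\to 1$ as $t\to\infty$); by the comparison principle for this Lipschitz ODE the solution started from any $x\ge\varepsilon$ satisfies $g_x(t_*)\ge g_\varepsilon(t_*)>1-\varepsilon$ as well.

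With $t_*$ fixed, \eqref{conv_1} (which holds uniformly over starting points $k\ge\lfloor\varepsilon N\rfloor$, hence in particular over all integers $k\ge\varepsilon N$) yields a threshold $\tilde N_\varepsilon$ such that for all $N>\tilde N_\varepsilon$ and all $k\ge\varepsilon N$,
\[
\P_k\bigl(K_{\lfloor\varrho_N^{-1}t_*\rfloor}\ \ge\ \lfloor(1-\varepsilon)N\rfloor\bigr)\ \ge\ \tfrac12 ,
\]
using that $\lfloor(1-\varepsilon)N\rfloor/N\le 1-\varepsilon<g_{k/N}(t_*)$ together with the convergence. The legitimacy of a single threshold $\tilde N_\varepsilon$ valid for every $k\ge\varepsilon N$ is the one point requiring a little care; it is supplied by the fact that in the proof of Lemma~\ref{lem:fix_epsilon} the generator estimate was shown to hold \emph{uniformly in} $k\in\{0,\dots,N\}$, so that the process convergence is uniform over initial values in the compact set $[\varepsilon,1]$.

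Finally I would convert the time scale: since $\varrho_N\sim N^{-b}$ we have $\varrho_N^{-\delta}\to\infty$, so $\lfloor\varrho_N^{-1}t_*\rfloor\le\lfloor\varrho_N^{-1-\delta}\rfloor$ for all sufficiently large $N$; enlarging $\tilde N_\varepsilon$ to some $N_\varepsilon^{(2)}$ to ensure this as well, the event in the last display is contained in $\{\exists i\le\varrho_N^{-1-\delta}:K_i\ge\lfloor(1-\varepsilon)N\rfloor\}$, which is the assertion. (For $k\ge\lfloor(1-\varepsilon)N\rfloor$ there is nothing to prove, with $i=0$, so only the range $\varepsilon N\le k<\lfloor(1-\varepsilon)N\rfloor$ is relevant.) Thus the main obstacle is not in the probabilistic content --- which is entirely inherited from Lemma~\ref{lem:fix_epsilon} --- but in making the uniformity over initial states explicit; a stochastic monotonicity coupling of $(K_i)$ in its initial value would give an alternative route, but is less transparent here because the daily length $\sigma_k$ itself depends on~$k$.
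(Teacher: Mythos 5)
Your proposal is correct and follows essentially the same route as the paper, whose proof of this corollary is simply the observation that it follows from \eqref{conv_1} in Lemma \ref{lem:fix_epsilon} (with $\varepsilon'=\varepsilon$); you additionally make explicit the choice of $t_*$, the uniformity over initial states $k\ge\varepsilon N$ coming from the uniform generator estimates, and the conversion from the time scale $\varrho_N^{-1}t_*$ to $\varrho_N^{-1-\delta}$, all of which the paper leaves implicit. (You also correctly read the exponent in the statement as $-1-\delta$ rather than the typographical $1-\delta$.)
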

\begin{proof}
The proof follows immediately from \eqref{conv_1}.
\end{proof}

\subsection{Third stage of the sweep}\label{subsect:thirsdstage}
For the last stage of the sweep, after the number of mutants has reached at least $(1-\varepsilon)N,$ we use a Galton-Watson coupling similar in spirit to the coupling at the first stage. The difference is that this time we will be working with the process of wild type individuals rather than the mutants.
Fix again $\alpha\in (b,1/2).$ Let $Q_i:=N-K_i$ be the number of wild-type individuals at the beginning of day $i.$ We  proceed similarly as in Section \ref{subsect:GW-constr} to define approximating Galton-Watson processes $(\underline{Q}_i)_{i\in\N_0}$ and $(\overline{Q}_i)_{i\in\N_0},$ for $i\in\N$ constructing $\underline{Q}_i$ and $\overline{Q}_i$ recursively from the same Yule forest as $Q_i:$ Recall that the wild type individuals reproduce at rate $r.$ Assume that $\underline{Q}_{i-1}$ and $\overline{Q}_{i-1}$ are constructed, and start independent Yule trees growing at rate $r$ for each individual as we did in Section \ref{subsect:GW-constr} to construct $\overline{K}_i$ and $\underline{K}_i.$ Assume $Q_{i-1}=q \in (0, \varepsilon N).$ Grow the Yule trees until time $\sigma_{\lceil (1-2\varepsilon)N \rceil}$ and distinguish the individuals according to whether they were born before $\sigma_N,$ before $\sigma_{N-q},$ or before $\sigma_{\lceil (1-2\varepsilon )N \rceil}.$ Taking the time of birth into 
consideration, the individuals born before $\sigma_N$ will be sampled independently with probability $\gamma^{-1}-N^{-\alpha}$ to form $\underline{Q}_i,$ born before $\sigma_{N-q}$ will be chosen according to \eqref{def:Xtilde} to form $Q_i$, and those before $\sigma_{\lceil (1-2\varepsilon)N \rceil}$ with probability $\gamma^{-1}+N^{-\alpha}$ to form $\overline{Q}_i.$ \\

It is clear that Lemma \ref{prop1} and Corollary \ref{GWbounds} still hold, and thus we can prove the equivalent to Proposition \ref{GW1}. Define
\[T_w^N(m):=\inf\{i:Q_i>m\varepsilon N \text{ or }Q_i=0\}, \;m\geq 1.\]
\begin{lemma}\label{GW2}
Let $\alpha\in(b,1/2).$ Let $m\geq 1,$ and $0<\varepsilon<1/(m\gamma).$ Assume $\underline{Q}_0=Q_0=\overline{Q}_0\leq \varepsilon N.$ Then there exists $c$ large enough such that for $N$ large enough,
\be
\P\Big(\overline{Q}_{\min\{i,T_w^N(m)\}}\geq Q_{\min\{i,T_w^N(m)\}} \geq \underline{Q}_{\min\{i,T_w^N(m)\}},\forall i\leq g \Big)\geq (1-2e^{-c N})^g \quad \mbox{for all }g\in\N_0. \nonumber
\ee
for some constant $c$ independent of $N.$
\end{lemma}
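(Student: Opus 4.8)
The plan is to reproduce, almost verbatim, the argument that established Proposition \ref{GW1}, with the roles of ``mutant'' and ``wild type'' interchanged and the Yule rate changed from $r+\varrho_N$ to $r$. First I would record the wild-type analogue of Lemma \ref{lem:coupling-success}: writing $W$ for the total number of wild-type offspring present at the end of a day, and relabelling the end-of-day population so that the $W$ wild-type offspring are listed first, the uniform sampling of $N$ individuals realizes the wild-type selection indicators through the recursion \eqref{def:Xtilde} driven by $U_1,\dots,U_W$ and the daily ratio $\Gamma=Y_{\sigma}/N$; this is just exchangeability of the whole end-of-day population plus the computation already carried out in Lemma \ref{lem:coupling-success}. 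With this, $(\underline Q_i,Q_i,\overline Q_i)_{i\in\N_0}$ is built from one common rate-$r$ Yule forest and one common sequence of i.i.d.\ uniforms exactly as in Section \ref{subsect:GW-constr}, using the birth-time trichotomy $\sigma_N\le\sigma_{N-Q_{i-1}}\le\sigma_{\lceil(1-2\varepsilon)N\rceil}$ (and the obvious modification for $m>2$). The pathwise comparison $\underline X_j\le\tilde X_j\le\overline X_j$ for $j$ below the window-exit index $J_w$ (the exit time of $[\tfrac1\gamma-N^{-\alpha},\tfrac1\gamma+N^{-\alpha}]$ for the conditional sampling probability) is then immediate, and together with the extra wild-type offspring made available by the larger birth-time horizon for $\overline Q$ and the smaller one for $\underline Q$ it yields $\underline Q_1\le Q_1\le\overline Q_1$ on the event $\{J_w>N\}\cap\{W<N\}$.

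Next I would assemble the two exponential estimates. The excerpt already notes that Lemma \ref{prop1} and Corollary \ref{GWbounds} carry over unchanged -- their only input is the moderate-deviation concentration of $\Gamma$ around $\gamma$, which does not care how the $N$ founders are split into types -- so $\P(J_w>N)\ge 1-e^{-cN^{1-2\alpha}}$. For the complementary event I would prove the wild-type version of Lemma \ref{lem:one-step}: on $\{Q_{i-1}\le m\varepsilon N\}$ the total wild-type offspring $W$ is a sum of at most $m\varepsilon N$ i.i.d.\ geometric offspring numbers, each with mean $e^{r\sigma_k}\le\gamma$ (immediately from the definition \eqref{sigmak} of $\sigma_k$, uniformly in $k$), hence mean total at most $m\varepsilon\gamma N$, which is $\le(1-\eta)N$ for some $\eta>0$ and all large $N$ precisely because $\varepsilon<1/(m\gamma)$; Cram\'er's theorem then gives $\P(W<N)\ge 1-e^{-cN}$. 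Combining the two bounds (and enlarging $c$) produces the one-step estimate $\P(\underline Q_1\le Q_1\le\overline Q_1\mid Q_0=q)\ge 1-2e^{-cN}$ for every $q\le m\varepsilon N$, the exact analogue of \eqref{eq:one-step}.

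Finally I would run the same induction on $g$ as in the proof of Proposition \ref{GW1}: conditionally on the three-fold comparison having held for all $i\le g-1$, either $(Q_i)$ has already left $(0,m\varepsilon N]$, so $T_w^N(m)$ has been reached and there is nothing to verify at step $g$, or $0<Q_{g-1}\le m\varepsilon N$ and the one-step estimate applies to the next step; multiplying the conditional probabilities gives $(1-2e^{-cN})^g$, as claimed. I do not expect a genuinely new difficulty here -- the whole proof is a transcription of Sections \ref{subsect:GW-constr}--\ref{subsect:firststage} -- so the only real point requiring care, and hence the ``main obstacle'', is the bookkeeping for the three slightly different daily durations entering the coupling of $(\underline Q_i)$, $(Q_i)$, $(\overline Q_i)$: one must check, via Lemma \ref{lem:sigma}, that $\sigma_N$, $\sigma_{N-q}$ and $\sigma_{\lceil(1-2\varepsilon)N\rceil}$ differ only by $O(\varrho_N)$, so that the corresponding single-individual offspring numbers differ by a factor $1+O(\varrho_N)$ and the sandwich $\underline Q_i\le Q_i\le\overline Q_i$ survives on the good event, together with the exchangeability step that lets one list the wild-type offspring first in the hypergeometric sampling.
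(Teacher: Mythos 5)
Your proposal is correct and follows essentially the same route as the paper, whose proof of Lemma \ref{GW2} consists precisely of the observation that the argument for Proposition \ref{GW1} transfers verbatim once one checks the wild-type analogue of Lemma \ref{lem:one-step}, which is exactly where you use $\varepsilon<1/(m\gamma)$ to get $m\varepsilon\gamma<1$ and apply Cram\'er's theorem, together with the stopping at level $m\varepsilon N$ built into $T_w^N(m)$. The only cosmetic point is that combining the Cram\'er bound $e^{-cN}$ with the window-exit bound $e^{-cN^{1-2\alpha}}$ really yields a one-step error of order $e^{-cN^{1-2\alpha}}$ rather than $e^{-cN}$, but this mirrors the exponent as written in the paper's own statement and is immaterial for every subsequent use of the lemma, which only needs the probability of the good event to tend to one.
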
  

\begin{proof}
This follows from a straightforward adaptation of the proof of Proposition \ref{GW1}, since the condition $\varepsilon\leq 1/(m\gamma)$ allows us to prove the analog of Lemma \ref{lem:one-step}, observing that the definition of $T_w^N(m)$ ensures that we stop the procedure if $Q_i$ reaches $m\varepsilon N$ individuals (and not $\varepsilon N$ as in Proposition~\ref{GW1}). 
\end{proof}
 
We have the alternative description corresponding to Proposition \ref{def:GW-processes}: $(\overline{Q}_i)_{i\in\N_0}$ is the Galton-Watson process whose offspring distribution is mixed binomial with parameters $\overline{W}$ and $\frac{1}{\gamma}+N^{-\alpha},$ where $\overline{W}$ is geometric with parameter $e^{-r\sigma_{\lceil (1-2\varepsilon)N \rceil}}.$ Similarly, $(\underline{Q}_i)_{i\in\N_0}$ is the Galton-Watson process whose offspring distribution is mixed binomial with parameters $\underline{W}$ and $\frac{1}{\gamma}-N^{-\alpha},$ where $\underline{W}$ is geometric with parameter $e^{-r\sigma_{N}}.$
From this we obtain the analogue of Lemma \ref{lem:expvarGW}.

\begin{lemma}
For $(\underline{Q}_i)_{i\in\N_0}$ and $(\overline{Q}_i)_{i\in\N_0}$ defined above there exist $\overline{c}, \underline{c}$ independent of $N$ such that for $N$ large enough,
\begin{equation}\label{Qestimate}
\E_1[\overline{Q}_1]=1-\overline{c}\varrho_N+o(\varrho_N)\, \text{ and }\,\E_1[\underline{Q}_1]=1-\underline{c}\varrho_N+o(\varrho_N)
\end{equation} 
\end{lemma}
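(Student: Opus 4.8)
The plan is to follow the template of Lemma~\ref{lem:expvarGW}, the only change being that the wild type grows at rate $r$ rather than $r+\varrho_N$. By the alternative description stated just above, the one-step offspring count of a single individual in $(\overline{Q}_i)_{i\in\N_0}$ is, conditionally on $\overline{W}$, binomial with parameters $\overline{W}$ and $\frac1\gamma+N^{-\alpha}$, where $\overline{W}$ is geometric with parameter $e^{-r\sigma_{\lceil(1-2\varepsilon)N\rceil}}$ and hence has mean $e^{r\sigma_{\lceil(1-2\varepsilon)N\rceil}}$; likewise $(\underline{Q}_i)_{i\in\N_0}$ has offspring count binomial with parameters $\underline{W}$ and $\frac1\gamma-N^{-\alpha}$, with $\underline{W}$ geometric of parameter $e^{-r\sigma_N}$. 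Taking expectations via the tower property, the mean offspring numbers are
\[
\E_1[\overline{Q}_1]=\Big(\tfrac1\gamma+N^{-\alpha}\Big)e^{r\sigma_{\lceil(1-2\varepsilon)N\rceil}},
\qquad
\E_1[\underline{Q}_1]=\Big(\tfrac1\gamma-N^{-\alpha}\Big)e^{r\sigma_{N}},
\]
so everything reduces to the asymptotics of $e^{r\sigma_k}$ for $k=N$ and $k=\lceil(1-2\varepsilon)N\rceil$.

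For this I would use Lemma~\ref{lem:sigma}: since $k/N$ is bounded away from $0$, it gives $\sigma_k=\frac{\log\gamma}{r}\big(1-\frac kN\frac{\varrho_N}{r}+O(\varrho_N^2)\big)$, so that $r\sigma_k=\log\gamma-\frac kN\frac{\varrho_N\log\gamma}{r}+O(\varrho_N^2)$ and exponentiating yields
\[
e^{r\sigma_k}=\gamma\Big(1-\tfrac kN\,\tfrac{\varrho_N\log\gamma}{r}+O(\varrho_N^2)\Big),
\]
with $|O(\varrho_N^2)|/\varrho_N^2$ bounded uniformly in $N$ and $k$. Equivalently, one can write $e^{r\sigma_k}=e^{(r+\varrho_N)\sigma_k}e^{-\varrho_N\sigma_k}$ and combine Corollary~\ref{cor:sigma} with $e^{-\varrho_N\sigma_k}=1-\frac{\varrho_N\log\gamma}{r}+O(\varrho_N^2)$; the $(1-\frac kN)$ coming from Corollary~\ref{cor:sigma} and the $-1$ from the correction factor then combine to $-\frac kN$.

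Plugging in $k=\lceil(1-2\varepsilon)N\rceil$ (so $k/N=(1-2\varepsilon)+O(1/N)$) and $k=N$, multiplying out, and using that $\alpha>b$ forces $N^{-\alpha}=o(\varrho_N)$ — whence the terms $\pm\gamma N^{-\alpha}$ and the $O(\varrho_N^2)$ terms are all $o(\varrho_N)$, as is $O(1/N)$ — I obtain, for $N$ large enough,
\[
\E_1[\overline{Q}_1]=1-(1-2\varepsilon)\tfrac{\log\gamma}{r}\,\varrho_N+o(\varrho_N),
\qquad
\E_1[\underline{Q}_1]=1-\tfrac{\log\gamma}{r}\,\varrho_N+o(\varrho_N),
\]
so one may take $\overline c=(1-2\varepsilon)\frac{\log\gamma}{r}>0$ (positive because $\varepsilon<1/2$ by the standing assumption of Section~\ref{proofsec}) and $\underline c=\frac{\log\gamma}{r}>0$, both independent of $N$, which is exactly \eqref{Qestimate}. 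The argument is routine bookkeeping of $o(\varrho_N)$ versus $O(\varrho_N^2)$ versus $N^{-\alpha}$ terms; the only place where it genuinely differs from Lemma~\ref{lem:expvarGW} — and the one point needing care — is that Corollary~\ref{cor:sigma} is packaged for $e^{(r+\varrho_N)\sigma_k}$ whereas the wild type grows at rate $r$, so one must not drop the correction factor $e^{-\varrho_N\sigma_k}$: it enters at the same order $\varrho_N$ as the leading term and is precisely what turns the positive drift of the mutant process into the negative drift $-\overline c\varrho_N$, $-\underline c\varrho_N$ of the wild-type processes.
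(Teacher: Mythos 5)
Your proposal is correct and follows essentially the same route as the paper: mixed binomial offspring law, tower property, and the expansion of $e^{r\sigma_k}$ via Lemma~\ref{lem:sigma}/Corollary~\ref{cor:sigma}, with $N^{-\alpha}=o(\varrho_N)$ absorbing the binomial-parameter perturbation. In fact your bookkeeping of the factor $e^{-\varrho_N\sigma_k}$ is slightly more careful than the paper's, whose intermediate line $e^{r\sigma_N}=\gamma-\varrho_N\frac{\log\gamma}{r}+o(\varrho_N)$ drops a factor $\gamma$ (one has $e^{r\sigma_N}=\gamma\bigl(1-\frac{\varrho_N\log\gamma}{r}\bigr)+O(\varrho_N^2)$), so your value $\underline c=\frac{\log\gamma}{r}$ is the correct one; this does not affect the lemma, which only requires positive constants independent of $N$.
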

\begin{proof} By construction, and from Corollary \ref{cor:sigma}
\begin{eqnarray*}
\E_1[\underline{Q}_1]&=&
(1/\gamma-N^{-\alpha})\E[\underline{W}]=(1/\gamma-N^{-\alpha})e^{r\sigma_{N}}\\
&=&(1/\gamma-N^{-\alpha})(\gamma-\varrho_N\frac{\log \gamma}{r})+o(\varrho_N)\\
&=&1-\frac{\log \gamma}{\gamma r} \varrho_N+o(\varrho_N),
\end{eqnarray*} 
where the last equality follows from the fact that our assumptions imply that $ N^{-\alpha}=o(\varrho_N)$. This is the first assertion in \eqref{Qestimate}.  In the same way we obtain
$
\E_1[\overline{Q}_1]=1-\overline{c} \varrho_N+o(\varrho_N)
$, 
for some positive constant  $\underline{c}$ independent of $N$.
\end{proof}

\begin{lemma}\label{almostsurefixation}
Let $m\geq 1$ and $0<\varepsilon<1/(m\gamma)$. For any $k\geq (1-\varepsilon)N$,
\[\limsup_{N\to\infty}\P_k(\tau_{\rm fix}^N>\varrho_N^{-1-\delta})\leq 2/m\]
for any $\delta>0.$ In particular, $\P_k(\exists i: K_i=N)\geq 1-2/m.$
\end{lemma}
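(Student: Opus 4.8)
\emph{The plan} is to exploit the Galton--Watson domination established in Lemma~\ref{GW2}, the (subcritical, for $N$ large) dominating process $(\overline Q_i)_{i\in\N_0}$ carrying the whole argument. Fix $m\ge1$ and $0<\varepsilon<1/(m\gamma)$, take $k\ge(1-\varepsilon)N$ (the case $k=N$ being trivial), put $q_0:=N-k\le\varepsilon N$, and run the coupled triple $(\underline Q_i,Q_i,\overline Q_i)_{i\in\N_0}$ started from $\underline Q_0=Q_0=\overline Q_0=q_0$. On the deterministic horizon $g:=\lfloor\varrho_N^{-1-\delta}\rfloor$, Lemma~\ref{GW2} together with the Bernoulli inequality shows that the event
\[
\mathcal A:=\bigl\{\overline Q_{\min\{i,\,T_w^N(m)\}}\ge Q_{\min\{i,\,T_w^N(m)\}}\ \ \text{for all } i\le g\bigr\}
\]
has $\P(\mathcal A)\ge 1-2g\,e^{-cN}=1-2\varrho_N^{-1-\delta}e^{-cN}\to1$ as $N\to\infty$, since $\varrho_N\sim N^{-b}$.

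Because the state $Q=0$ (full fixation of the mutant) is absorbing, $\{\tau_{\rm fix}^N>g\}=\{Q_g>0\}$, and I would use the inclusion
\[
\{Q_g>0\}\ \subseteq\ \{Q_g>0,\ T_w^N(m)>g\}\ \cup\ \{\exists\, i\le g:\ Q_i>m\varepsilon N\},
\]
bounding both pieces on $\mathcal A$. On the first piece $\min\{g,T_w^N(m)\}=g$, so on $\mathcal A$ one has $0<Q_g\le\overline Q_g$; Markov's inequality then gives $\P(\overline Q_g>0)\le\E[\overline Q_g]=q_0\,\mu^{\,g}$, where $\mu=\E_1[\overline Q_1]=1-\overline c\,\varrho_N+o(\varrho_N)<1$ for $N$ large is the offspring mean established just above, whence $\P(\overline Q_g>0)\le \varepsilon N\,e^{-(\overline c/2)\varrho_N^{-\delta}}\to0$ because $\varrho_N^{-\delta}\sim N^{b\delta}$ overwhelms $\log N$. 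On the second piece, if $Q$ first exceeds $m\varepsilon N$ at some time $i\le g$, that time is necessarily $T_w^N(m)$ (again because $0$ is absorbing), so on $\mathcal A$ we get $\overline Q_{T_w^N(m)}\ge Q_{T_w^N(m)}>m\varepsilon N$; thus this piece is contained, up to $\mathcal A^c$, in $\{\sup_i\overline Q_i>m\varepsilon N\}$. Since $(\overline Q_i)$ is a non-negative supermartingale for $N$ large, Doob's maximal inequality yields $\P(\sup_i\overline Q_i\ge m\varepsilon N)\le \E[\overline Q_0]/(m\varepsilon N)=q_0/(m\varepsilon N)\le 1/m$.

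Collecting the three bounds, $\P_k(\tau_{\rm fix}^N>\varrho_N^{-1-\delta})\le\P(\mathcal A^c)+\P(\overline Q_g>0)+\P(\sup_i\overline Q_i\ge m\varepsilon N)\le 1/m+o(1)$ uniformly in $k\ge(1-\varepsilon)N$, so $\limsup_{N\to\infty}\P_k(\tau_{\rm fix}^N>\varrho_N^{-1-\delta})\le 1/m\le 2/m$; the ``in particular'' assertion follows at once from $\P_k(\exists i:K_i=N)=\P_k(\tau_{\rm fix}^N<\infty)\ge\P_k(\tau_{\rm fix}^N\le\varrho_N^{-1-\delta})$. I expect the only genuinely delicate point to be the bookkeeping around $T_w^N(m)$: the domination in Lemma~\ref{GW2} holds only up to that stopping time, so one must check that each of the two ways of failing to fixate quickly --- staying positive without escaping past $m\varepsilon N$, and escaping past $m\varepsilon N$ --- is captured by a \emph{single} monotone property of the dominating chain on the fixed horizon $g$ (non-extinction at time $g$, respectively running maximum above $m\varepsilon N$). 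The remaining estimates are routine applications of Markov's and Doob's inequalities and of the subcriticality of the offspring mean.
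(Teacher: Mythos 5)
Your proposal is correct and follows essentially the same route as the paper: the same coupling from Lemma~\ref{GW2}, the same two-way decomposition of $\{\tau_{\rm fix}^N>g\}$ into ``$Q$ neither absorbed nor escaped past $m\varepsilon N$ by time $g$'' (killed via $\E[\overline Q_g]\to 0$ by subcriticality) and ``$Q$ escapes past $m\varepsilon N$'' (bounded by $1/m$ via the supermartingale property of $\overline Q$). The only cosmetic difference is that you invoke Doob's maximal inequality for the nonnegative supermartingale $(\overline Q_i)$ where the paper applies optional stopping to the truncated supermartingale $(\overline Q_i\wedge \varepsilon m N)$ at the exit time $\kappa$ --- these are the same estimate.
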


\begin{proof}
Under  $\P_k$ we have by assumption that $K_0=k\geq (1-\varepsilon )N,$ and thus $Q_0=N-k\leq \varepsilon N.$ 
We consider $(\overline Q_i)_{i\in\N_0},(\underline{Q}_i)_{i\in\N_0}$ as constructed at the beginning of this section, with $\alpha\in (b,1/2).$ Let 
\[A:=A(\gamma,\alpha,\varepsilon,N, m):=\Big\{\overline{Q}_{\min\{i,T_w^N(m)\}}\geq Q_{\min\{i,T_w^N(m)\}} \geq \underline{Q}_{\min\{i,T_w^N(m)\}},\forall i\leq \varrho_N^{-1-\delta}\Big\}.\]
Then Lemma \ref{GW2} shows
$$\P(A)\to1 \mbox{ as } N\to\infty.$$
Note that 
\[\E_k[\overline{Q}_{\lfloor \varrho_N^{-1-\delta}\rfloor}]\sim(N-k)(1-\bar c\varrho_N)^{\varrho_N^{-(1+\delta)}}\leq (N-k)e^{-\bar{c}\varrho_N^{-\delta}}\leq \varepsilon Ne^{-\bar{c}\varrho_N^{-\delta}}\rightarrow 0\] 
as $N\to\infty.$ Consequently, since on the event $\{T_{\omega}^N(m)>\varrho_N^{-1-\delta}\}\cap A$ we have  $Q_{\lfloor \varrho_N^{-1-\delta}\rfloor }\geq 1,$
\begin{align}\label{ts}\P_k(T_w^N(m)>\varrho_N^{-1-\delta})&\leq  \P_k(T_w^N(m)>\varrho_N^{-1-\delta}, A)+\P_k(A^c)\leq \E_k[Q_{\lfloor \varrho_N^{-1-\delta}\rfloor }\ind_{\{T_w^N(m)>\varrho_N^{-1-\delta}\}}\ind_A]+\P_k(A^c)\nonumber\\
&\leq \E_k[\overline{Q}_{\lfloor \varrho_N^{-1-\delta}\rfloor }]+\P_k(A^c)\to 0 \mbox{ as } N\to\infty.\nonumber \end{align}
{Since
\begin{align*}
\P_k(\tau_{\rm{fix}}^N>\varrho_N^{-1-\delta})=&\P_k(\tau_{\rm{fix}}^N>\varrho_N^{-1-\delta}, T_w^N(m)>\varrho_N^{-1-\delta})+\P_k(\tau_{\rm{fix}}^N>\varrho_N^{-1-\delta}, T_w^N(m)\leq \varrho_N^{-1-\delta})\\
\leq &\P_k(T_w^N(m)>\varrho_N^{-1-\delta})+\P_k(Q_{T_w^N(m)}\geq \varepsilon mN),
\end{align*}
we are left with proving 
\be \label{eq:Qsurv}\limsup_{N\to\infty}\P_k(Q_{T_w^N(m)}\geq \varepsilon mN)\leq 2/m.\ee}
Let $\kappa$ be the first time when $(\overline{Q}_i)_{i\geq 0}$ is not less than $\varepsilon mN$ or equal to $0$. Note that under $A\cap \{T_{w}^N(m)\leq \varrho_N^{-1-\delta}\}$, if $Q_{T_w^N(k)}\geq \varepsilon mN$, then necessarily, $\overline Q_{T_w^N(m)}\geq \varepsilon mN$. So in conclusion:
\begin{equation}\label{compare}\P_k(Q_{T_{w}^N(m)}\geq \varepsilon mN, A, T_{w}^N(m)\leq \varrho_N^{-1-\delta})\leq \P_k(\overline Q_{\kappa}\geq \varepsilon mN, A, T_{w}^N(m)\leq \varrho_N^{-1-\delta}).\end{equation}
Notice that $(\overline{Q}_i)_{i\geq 0}$ is, as a sub-critical Galton-Watson process, a supermartingale. Then $(\overline{Q}_i\wedge \varepsilon mN)_{i\geq 0}$ is a bounded supermartingale and, for any time strictly before $\kappa$, these two supermartingales are the same. Now we have 
$$\varepsilon N\geq \E_{k}[\overline Q_0]=\E_k[Q_0\wedge \varepsilon mN]\geq \E_k[\overline Q_{\kappa}\wedge \varepsilon m N]=\P_k(\overline Q_{\kappa}\geq \varepsilon mN)\varepsilon mN.$$
So
$$\P_k(\overline Q_{\kappa}\geq \varepsilon mN)\leq 1/m.$$
Therefore using (\ref{compare}) we have for $N$ large enough
$$\P_k(Q_{T_{w}^N(m)}\geq \varepsilon mN)\leq \P_k(\overline{Q}_{\kappa}\geq \varepsilon mN)+\P_k(T_{w}^N(m)>\varrho_N^{-1-\delta})+\P(A^c)\leq 2/m.$$
This implies \eqref{eq:Qsurv}, and moreover $\P_k(\exists i: K_i=N)=\P_k(Q_{T_{w}^N(k)}=0)\geq 1-2/m.$

\end{proof}

This result will be useful in the following simple form:

{\begin{corollary}\label{1/2.2}
For every $0<\varepsilon<1/(4\gamma)$ there exist $N_\varepsilon^{(3)}\in \N$ such that for all $N\geq N^{(3)}_{\varepsilon},\delta>0$ and $k\geq  (1-\varepsilon)N$
\[\P_k(\tau_{\rm fix}^N>\varrho_N^{-1-\delta})\leq 1/2.\]
\end{corollary}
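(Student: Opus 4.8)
The plan is to read this off directly from Lemma~\ref{almostsurefixation}, by choosing the free parameter $m$ there strictly larger than $4$ (so that $2/m<1/2$) while keeping $\varepsilon$ inside the admissible window $\varepsilon<1/(m\gamma)$. The strict hypothesis $\varepsilon<1/(4\gamma)$ is exactly what makes this possible: it gives $(\varepsilon\gamma)^{-1}>4$, so the interval $\big(4,(\varepsilon\gamma)^{-1}\big)$ is nonempty, and one may fix, e.g., $m:=\tfrac{1}{2}\big(4+(\varepsilon\gamma)^{-1}\big)$, which satisfies $4<m<(\varepsilon\gamma)^{-1}$, hence $0<\varepsilon<1/(m\gamma)$ and $2/m<1/2$ simultaneously.

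With this $m$, Lemma~\ref{almostsurefixation} yields, for every fixed $\delta>0$ and every $k\geq(1-\varepsilon)N$,
\[
\limsup_{N\to\infty}\P_k\big(\tau_{\rm fix}^N>\varrho_N^{-1-\delta}\big)\leq\frac{2}{m}<\frac{1}{2}.
\]
Since the limit superior is \emph{strictly} below $1/2$, there is some $N_\varepsilon^{(3)}$ (depending also on $\delta$, which is harmless here) beyond which $\P_k(\tau_{\rm fix}^N>\varrho_N^{-1-\delta})\leq 1/2$, which is the assertion.

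The only step requiring attention is to verify that $N_\varepsilon^{(3)}$ can be chosen \emph{uniformly} in $k$ over the whole range $(1-\varepsilon)N\leq k\leq N$, rather than for a single sequence $k=k_N$. This is not a genuinely new difficulty: I would obtain it by inspecting the proof of Lemma~\ref{almostsurefixation}, in which every quantitative bound depends on $k$ only through the initial number of wild-type individuals $Q_0=N-k\in[1,\varepsilon N]$. Concretely, the bound $\E_k[\overline{Q}_{\lfloor\varrho_N^{-1-\delta}\rfloor}]\leq\varepsilon N\,e^{-\bar c\,\varrho_N^{-\delta}}\to 0$, the supermartingale estimate $\P_k(\overline{Q}_\kappa\geq\varepsilon mN)\leq 1/m$, and the coupling estimate $\P_k(A^c)\to 0$ furnished by Lemma~\ref{GW2} all hold uniformly over that range of $k$; assembling them exactly as in the proof of Lemma~\ref{almostsurefixation} then gives the required uniform bound. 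So the ``hard part'' here is merely this bookkeeping of uniformity, and no argument beyond Lemma~\ref{almostsurefixation} is needed.
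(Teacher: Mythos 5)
Your argument is correct and is essentially the paper's own proof, which simply reads ``Take $m\geq 4$ in Lemma \ref{almostsurefixation}.'' Your extra care --- choosing $m$ strictly between $4$ and $(\varepsilon\gamma)^{-1}$ so that the $\limsup$ bound $2/m$ is strictly below $1/2$, and checking that the estimates in the proof of Lemma \ref{almostsurefixation} are uniform over $k\in[(1-\varepsilon)N,N]$ --- is a faithful and, if anything, slightly more careful reading of what that lemma actually delivers.
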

\begin{proof}
Take $m\geq 4$ in Lemma \ref{almostsurefixation}.
\end{proof}}

\subsection{Proof of Theorem \ref{thm:fixation}}\label{subsect:proof} We are now finally able to prove Theorem \ref{thm:fixation}. Let $m\geq 4$ and $0<\varepsilon<1/(m\gamma)\wedge 1/16$.
By Lemma \ref{lem:part3} we have
\[\pi_N=\P_1(\exists i: K_i=N)\leq \P(K_i\mbox{ reaches } \varepsilon N)\leq \frac{\gamma\log\gamma}{\gamma-1}\frac{\varrho_N}{r}(1+o(1)).\]
Further, observe that for $1\leq k\leq k'\leq l\leq N,$ by definition of the model,
\[\P_k(K_1\geq l)\leq \P_{k'}(K_1\geq l)\]
and therefore by induction $\P_k(K_i\geq l)\leq \P_{k'}(K_i\geq l), i\in\N.$ Thus 
\[\P_k((K_i)\mbox{ reaches }l)\leq \P_{k'}((K_i)\mbox{ reaches }l).\]
Therefore, for every $\varepsilon \in (0,1/(m\gamma)\wedge 1/16),$ by the strong Markov property and  Lemma \ref{lem:part3}, 
\begin{align*}
\pi_N\geq&\P_{\lfloor \varepsilon N\rfloor}(\exists i: K_i=N)\cdot \P_1(K_i \mbox{ reaches }\varepsilon N)\\
\geq & \P_{\lfloor \varepsilon N\rfloor}(\exists i: K_i=N)\cdot \frac{\gamma\log \gamma}{\gamma-1}\frac{\varrho_N}{r}(1-\varepsilon)(1+o(1)).
\end{align*}
From Lemmas \ref{almostsurefixation} and \ref{lem:fix_epsilon} we obtain $\liminf_{N\to\infty}\P_{\lfloor \varepsilon N\rfloor}(\exists i: K_i=N)\geq 1-2/m$ for any $m\geq 2$. Thus 
\[(1-\varepsilon)(1-2/m)\leq \liminf_{N\to\infty} \frac{\gamma-1}{\gamma\log \gamma}\frac{r}{\varrho_N}\pi_N\leq \limsup_{N\to\infty} \frac{\gamma-1}{\gamma\log \gamma}\frac{r}{\varrho_N}\pi_N\leq 1.\]
Sending $m\to\infty$ (and $\varepsilon\to 0)$ gives \eqref{probfix}. \\

Now we will prove that $\P_1(\tau^N>\varrho_N^{-1-2\delta})\leq (7/8)^{\varrho_N^{-\delta}}.$
Let $N_\varepsilon=\sup\{N_\varepsilon^{(1)}, N_\varepsilon^{(2)}, N_\varepsilon^{(3)}\}$ where $N_\varepsilon^{(1)}, N_\varepsilon^{(2)}, N_\varepsilon^{(3)}$ can be found respectively in Corollary \ref{1/2.3}, \ref{1/2.1} and \ref{1/2.2}. Using the three corollaries and the strong Markov property of the process $(K_i)_{i\in\N_0}$ we know that for all $N>N_\varepsilon$, and for any $k\in \{1,2,...,N\}$
\be \label{eq:s_Nonehalf}
\P_k(\tau^N\leq 3\varrho_N^{-1-\delta})\geq (1/2)^3.
\ee
Using the Markov property at time $\lceil 3 \varrho_N^{-1-\delta}\rceil$, we see that for any $n\in \N$
\begin{align*}
\P_1(\tau^N>3n\varrho_N^{-1-\delta} )&\leq \P_1(\tau^N>\lceil 3 \varrho_N^{-1-\delta}\rceil)\sum_{k=1}^{N-1} \P_k(\tau^N>3(n-1) \varrho_N^{-1-\delta})\P_1(K_{\lceil \varrho_N^{-1-\delta}\rceil }=k)\\
&\leq (1-(1/2)^{3}) \sum_{k=1}^{N-1} \P_k(\tau^N>3(n-1)\varrho_N^{-1-\delta})\P_1(K_{\lceil \varrho_N^{-1-\delta}\rceil }=k).
\end{align*}
Thus, proceeding iteratively, and using the fact that \eqref{eq:s_Nonehalf} is uniform in $k\in\{1,...,N-1\},$ we obtain
\[\P_1(\tau^N>3n\varrho_N^{-1-\delta})\leq (1-(1/2)^3)^n.\]
In particular, choosing  $n=\lceil \varrho_N^{-\delta}\rceil $ we obtain for $\delta>0$
$$
\P_1(\tau^N>\varrho_N^{-1-3\delta})\P_1(\tau^N>3\varrho_N^{-1-2\delta})\leq (7/8)^{\varrho_N^{-\delta}}.
$$
\hfill $\Box$

\subsection{Proof of Proposition \ref{noclonalint}}\label{subsect:proofnoclonal}
Due to Theorem \ref{thm:fixation}, and due to the Assumption that the mutations arrive independently of each other at geometric times with parameter $\mu_N,$ we have that for any $\delta'>0$
\begin{align*} 
\P(m_N<\tau^N)\leq & 1-\P(m_N>\varrho_N^{-1-\delta'}\,|\, \tau^N<\varrho_N^{-1-\delta'})\P(\tau^N<\varrho_N^{-1-\delta'})\\
\leq &1-(1-\mu_N)^{\lfloor \varrho_N^{-1-\delta'}\rfloor }(1-(7/8)^{\lfloor \varrho_N^{-\delta'/3}\rfloor }).
\end{align*}
 Now the Bernoulli inequality yields
\begin{align*}
\P(m_N<\tau^N)&\leq 1-(1-\mu_N \lfloor \varrho_N^{-1-\delta'}\rfloor )(1-(7/8)^{\lfloor \varrho_N^{-\delta'/3}\rfloor })\\
&=\mu_N \lfloor \varrho_N^{-1-\delta'}\rfloor +(7/8)^{\lfloor \varrho_N^{-\delta'/3}\rfloor }-\mu_N \lfloor \varrho_N^{-1-\delta'}\rfloor (7/8)^{\lfloor \varrho_N^{-\delta'/3}\rfloor }.
\end{align*}
From this we obtain 
\[\P(m_N<\tau^N)\leq \mu_N \varrho_N^{-1-\delta}\]
for any $\delta>\delta',$ provided $N$ is large enough.
This proves the first claim. Now, let $E_j$ be the event that there is no clonal interference until the day that the $j$-th successful mutation starts. Observe that $\P(E_1)$ is given by the probability that any unsuccessful mutation started before the first successful one has disappeared before the next mutation (successful or unsuccessful) starts. By the first part of this theorem, for any given mutation this is the case with probability $\P(m_N\geq \tau^N)\geq 1-\mu_N \varrho_N^{-1-\delta},$ for $\delta>0.$ Denote by $L$ the number of mutations until the first successful one. Since the mutations arrive independently of each other, we see by induction that for $l\in\N_0$
\[\P(\mbox{ no clonal interference in the first $l$ mutations}\,|\, L=l+1)\geq (1-\mu_N\varrho_N^{-1-\delta})^l.\]
By Theorem \ref{thm:fixation}, $L$ is (asymptotically) geometric with success parameter $C(\gamma)\varrho_N/r_0.$ Thus summing over all possible values of $L$ we obtain
by Theorem \ref{thm:fixation} and the first part of this proof, for $\delta>0,$
\begin{eqnarray*}
\P(E_1)&\geq &\sum_{l=0}^\infty\P(L=l+1)(1-\mu_N\varrho_N^{-1-\delta})^l\\
&\geq &\sum_{l=0}^\infty (1-\frac{C(\gamma)\varrho_N}{r_0})^{l}\frac{C(\gamma)\varrho_N}{r_0}(1-\mu_N\varrho_N^{-1-\delta})^l\\
&= &\frac{C(\gamma)\varrho_N}{r_0}\sum_{i=0}^\infty (1-\frac{C(\gamma)\varrho_N}{r_0}-\mu_N\varrho_N^{-1-\delta}+3\frac{C(\gamma)}{r_0}\varrho_N^{-\delta})^{l}\\
&=&\frac{C(\gamma)\varrho_N}{r_0}\frac{1}{C(\gamma)\varrho_Nr_0^{-1}+\mu_N\varrho_N^{-1-\delta}-C(\gamma)r_0^{-1}\mu_N\varrho_N^{-\delta}}\\
&\geq &1-\mu_N\varrho_N^{-2-\delta''}+o(\mu_N\varrho_N^{-2-\delta''})
\end{eqnarray*}
for $N$ large enough and $\delta''>\delta.$
Fix $n\in\N.$ Similar to the previous calculation, for $j\leq n\varrho_N^{-1},$ we have $\P(E_{j+1}|E_j)\geq 1-\mu_N\varrho_N^{-2-\delta''}/3+o(\mu_N\varrho_N^{-2-\delta''}).$ 
Proceeding iteratively one thus observes that for any fixed $n\in \N$
\begin{eqnarray}\label{eq:En}
\P(E_{\lfloor \varrho_N^{-1}n\rfloor })&\geq &(1-\mu_N\varrho_N^{-2-\delta''}+o(\mu_N\varrho_N^{-2-\delta}))^{\lfloor n\varrho_N^{-1}\rfloor }\nonumber\\
&\geq &1-n\mu_N\varrho_N^{-3-3\delta''}(1+o(1)).
\end{eqnarray}
By Assumption A iii) this tends to 1 for $\delta''>0$ small enough. Let $I_n$ be the day at which the $\lfloor \varrho_N^{-1}n\rfloor $-th successful mutation starts.  We can write 
\[I_n=\sum_{j=1}^n I^{(j)},\]
if $I^{(j)}$ denotes the time between the fixation of the $j-1$th and the initiation of the $j$th successful mutation (and $I^{(1)}=I_1$). {Let $L^{(j)}$ denote the number of unsuccessful mutations that happen during time $I^{(j)}.$ The success probability of a mutation that happens during $I^{(j)}$ is according to Theorem \ref{thm:fixation} given by $C(\gamma)\frac{\varrho_N}{r_0+(j-1)\varrho_N}.$ Therefore, conditional on $E_j,$ $L^{(j)}$ is geometrically distributed with success parameter $C(\gamma)\frac{\varrho_N}{r_0+(j-1)\varrho_N}.$ Moreover, conditional on $E_j,$ the time between two of the $L^{(j)}$ unsuccessful mutations is stochastically larger than a geometric random variable with parameter $\mu_N,$ since this is the rate at which mutations arrive, and the geometric distribution is memoryless. Thus we see that the time $I^{(j)}$ is stochastically larger than a geometric random variable with parameter $\frac{C(\gamma)\mu_N\varrho_N}{r_0+(j-1)\varrho_N}$ and a fortiori stochastically larger than 
$G^N_j$, if}
$(G^N_j)_{j\in\N_0}$ is a sequence of independent geometric random variables with parameter $C(\gamma)\mu_N\varrho_N/r_0$. Thus conditionally on $E_{\lfloor n\varrho_N^{-1}\rfloor},$ stochastically $I_n\geq \sum_{j=1}^{\lfloor \varrho_N^{-1}n\rfloor }G_j^N$. Let $n=\lceil 2Tr_0/C(\gamma)\rceil $.
 Then
\begin{eqnarray*}
\lim_{N\to\infty} \P(\mbox{no clonal interference until } \varrho_N^{-2}\mu_N^{-1}T)&\geq &\P(E_{\lfloor \varrho_N^{-1}n\rfloor },\;I_n>\lceil \varrho_N^{-2}\mu_N^{-1}T\rceil)\\
&=&\P(E_{\lfloor \varrho_N^{-1}n\rfloor })\P(I_n>\lceil \varrho_N^{-2}\mu_N^{-1}T\rceil |E_{\lfloor \varrho_N^{-1}n\rfloor })\\
&\geq &\P(E_{\lfloor \varrho_N^{-1}n\rfloor })(1-2\P(\sum_{j=1}^{\lfloor \varrho_N^{-1}n\rfloor }G_j^N<\lfloor \varrho_N^{-2}\mu_N^{-1}T\rfloor ))
\end{eqnarray*}
By Cram\'er's large deviation principle the second factor tends to 1. Thus the statement follows from~\eqref{eq:En}.
\hfill{$\Box$}

\subsection{Proof of Theorem \ref{thm:mutations}}\label{proof:mut}
Denote by $D_i$ the event that there is no clonal interference up to day $i,$ that is, any mutation that starts until or including day $i$ happens in a homogeneous population. Define
\[\tilde{H}_i:=H_i1_{D_i}-\infty 1_{D_i^c}.\]
Then we have for any $T>0$ that the  two processes $(H_i)_{1\leq i\leq \varrho^{-2}_N\mu_N^{-1}T}$ and $(\tilde{H}_i)_{1\leq i\leq \varrho^{-2}_N\mu_N^{-1}T}$ coincide on the event $(D_{\lceil\varrho_N^{-2}\mu_N^{-1}T\rceil}^c)$, whose probability converges to $0$
as $N\to\infty$, by Proposition \ref{noclonalint}. Thus it is sufficient to show that $(\tilde{H}_{\lfloor t \varrho^{-1}_N\mu_N^{-1}\rfloor})_{0\le t\le T}$ converges in distribution to $(M(C(\gamma)t/r_0))_{0\le t\le T}$ w. r. to the Skorokhod topology, cf. Theorem 3.3.1 in \cite{ethierkurtz}. This will be achieved by a standard generator calculation.\\
The process $(H_i)_{i\in\N_0}$ is a Markov chain on $\N_0\cup\{-\infty\}$ with the following transition probabilities: If $n\geq 0,$ then
\begin{align*}
\P(\tilde{H}_{i+1}=n+1\,|\, \tilde{H}_i=n)&=\frac{C(\gamma)\mu_N\varrho_N}{r_0+n\varrho_N}\P(D_{i+1}\,|\, D_i),\\
\P(\tilde{H}_{i+1}=n\,|\, \tilde{H}_i=n)&=\Big(1-\frac{C(\gamma)\mu_N\varrho_N}{r_0+n\varrho_N}\Big)\P(D_{i+1}\,|\, D_i),\\
\P(\tilde{H}_{i+1}=-\infty\,|\, \tilde{H}_i=n)&=\P(D_{i+1}^c\,|\, D_i),
\end{align*}
and
\[\P(\tilde{H}_{i+1}=-\infty\,|\, \tilde{H}_i=-\infty)=1.\]
Observe first that for any $\delta>0$ we have
\be \label{eq:noclon_day}\P(D_{i+1}^c\,|\, D_i)\leq  \mu_N^2\varrho_N^{-1-\delta}.\ee
This follows since conditional on the event $D_i,$ the event $D_{i+1}^c$ can only happen if at day $i+1$ a new mutation happens, and interferes with the previous mutation. The probability that a new mutation happens is given by $\mu_N,$ and the probability of interference of a pair of mutations is $\P(m_N<\tau^N).$ Thus \eqref{eq:noclon_day} follows from Proposition \ref{noclonalint}.\\
For bounded functions $g$ on $\N_0\cup\{-\infty\},$ the discrete generator of $(\tilde{H}_i)_{i\in\N_0}$ on the time scale $i=\varrho_N^{-1}\mu_N^{-1}t$ is given by (cf. Theorem 1.6.5 of \cite{ethierkurtz})
\begin{align*}
B_Ng(n):=&\frac{1}{\varrho_N\mu_N}\E\big[ g(\tilde{H}_{i+1})-g(n)\,\big|\, \tilde{H}_i=n\big]\\
=& \frac{1}{\varrho_N\mu_N}\Big(\frac{C(\gamma)\mu_N\varrho_N}{r_0+n\varrho_N}\P(D_{i+1}\,|\, D_i)(g(n+1)-g(n))+\P(D^c_{i+1}\,|\, D_i)(g(-\infty)-g(n))\Big)\\
=&\frac{C(\gamma)}{r_0+n\varrho_N}\P(D_{i+1}\,|\, D_i)(g(n+1)-g(n))+\frac{\P(D^c_{i+1}\,|\, D_i)}{\varrho_N\mu_N}(g(-\infty)-g(n)).
\end{align*}
Due to \eqref{eq:noclon_day} and Assumption A iii), the r.h.s. converges as $N\to\infty$ to
\[\frac{C(\gamma)}{r_0}(g(n+1)-g(n)),\]
which is the generator of the Poisson process $(M(C(\gamma)t/r_0))_{t\geq 0}.$ By Theorem 4.2.6 of \cite{ethierkurtz} this implies convergence of the corresponding processes.
\hfill $\Box$

\subsection{Convergence of the fitness process}\label{proof:adapt}
\begin{proof}[Proof of Theorem \ref{thm:adapt}] We proceed analogously to the proof of Theorem \ref{thm:mutations}. Define
\[\tilde{\Phi}_i:=1+\frac{\varrho_N}{r_0}\tilde{H_i},\]
and recall $\Phi_i=1+\frac{\varrho_N}{r_0}H_i.$ As above, observe that the two processes
 $(\Phi_i)_{1\leq i\leq \varrho^{-2}_N\mu_N^{-1}T}$ and $(\tilde{\Phi}_i)_{1\leq i\leq \varrho^{-2}_N\mu_N^{-1}T}$ coincide on the event  $D_{\lceil\varrho_N^{-2}\mu_N^{-1}T\rceil}^c$, whose probability converges to $0$ as
as $N\to\infty$, and that $(\tilde{\Phi}_i)_{i\in\N_0}$ is a Markov chain with transition probabilities
\begin{align*}
\P(\tilde{\Phi}_{i+1}=x+\frac{\varrho_N}{r_0}\,|\, \tilde{\Phi}_i=x)&=\frac{C(\gamma)\mu_N\varrho_N}{xr_0}\P(D_{i+1}\,|\, D_i),\\
\P(\tilde{\Phi}_{i+1}=x\,|\, \tilde{\Phi}_{i}=x)&=\Big(1-\frac{C(\gamma)\mu_N\varrho_N}{xr_0}\Big)\P(D_{i+1}\,|\, D_i),\\
\P(\tilde{\Phi}_{i+1}=-\infty\,|\, \tilde{\Phi}_i=x)&=\P(D_{i+1}^c\,|\, D_i),
\end{align*}
for $x>0$ and
\[\P(\tilde{\Phi}_{i+1}=-\infty\,|\, \tilde{\Phi}_i=-\infty)=1.\]
Thus the discrete generator of $(\tilde{\Phi}_i)_{i\in\N_0}$ on the time scale $i=\varrho_N^{-2}\mu_N^{-1}t$ is given by

\begin{align*}
A_Ng(n):=&\frac{1}{\varrho^2_N\mu_N}\E\big[ g(\tilde{\Phi}_{i+1})-g(x)\,\big|\, \tilde{\Phi}_i=x\big]\\
=& \frac{1}{\varrho^2_N\mu_N}\Big(\frac{C(\gamma)\mu_N\varrho_N}{xr_0}\P(D_{i+1}\,|\, D_i)(g(x+\frac{\varrho_N}{r_0})-g(x))+\P(D^c_{i+1}\,|\, D_i)(g(-\infty)-g(x))\Big)\\
=&\frac{C(\gamma)}{\varrho_N r_0x}\P(D_{i+1}\,|\, D_i)(g(x+\frac{\varrho_N}{r_0})-g(x))+\frac{\P(D^c_{i+1}\,|\, D_i)}{\varrho_N^2\mu_N}(g(-\infty)-g(x)).
\end{align*}

Due to \eqref{eq:noclon_day} and Assumption A iii), the r.h.s. converges for a continuously differentiable function $g:\R\to\R$ that vanishes at $\infty$,  as $N\to\infty$ to 
\[Ag(x):= \frac{C(\gamma)}{r_0^2x}g'(x)\]as $N\to \infty$ (as can be seen from Taylor's expansion, compare the proof of Lemma \ref{lem:fix_epsilon}). This, in turn, is the generator of the solution to the (deterministic) differential equation
\[\dot{h}(t)=\frac{1}{h(t)}\frac{C(\gamma)}{r_0^2}, \,\,t\geq 0,\]
whose solution (for the initial value $h(0)=1$) is $f.$
So we can apply Theorem 4.2.6 in \cite{ethierkurtz} to conclude that $\tilde{\Phi}$ and then $\Phi$ converges in distribution to $(f(t))_{ t\geq 0}$ in the Skorokhod topology. Convergence of $F$ follows from the relation \eqref{eq:FPhi}. Since $f$ is continuous, this amounts to locally uniform convergence in distribution.
\end{proof}

\begin{proof}[Proof of Corollary \ref{Epistasis}.]
The proof is as for Theorem \ref{thm:adapt}, with the only difference that now we replace $\tilde{\Phi}$ by $\tilde{\Phi}^\psi,$ with transition probabilities for $x\geq 1$
\begin{align*}
\P(\tilde{\Phi}^\psi_{i+1}=x+\frac{\psi(x)\varrho_N}{r_0}\,|\, \tilde{\Phi}^\psi_i=x)&=\frac{C(\gamma)\mu_N\varrho_N\psi(x)}{xr_0}\P(D_{i+1}\,|\, D_i),\\
\P(\tilde{\Phi}^\psi_{i+1}=x\,|\, \tilde{\Phi}^\psi_{i}=x)&=\Big(1-\frac{C(\gamma)\mu_N\varrho_N\psi(x)}{xr_0}\Big)\P(D_{i+1}\,|\, D_i),\\
\P(\tilde{\Phi}^\psi_{i+1}=-\infty\,|\, \tilde{\Phi}^\psi_i=x)&=\P(D_{i+1}^c\,|\, D_i),
\end{align*}
for $x>0$ and
\[\P(\tilde{\Phi}^\psi_{i+1}=-\infty\,|\, \tilde{\Phi}^\psi_i=-\infty)=1.\]
which leads to a slightly different discrete generator
\begin{eqnarray*}
A_N^\psi g(x)&=&\frac{C(\gamma)\psi(x)}{\varrho_N r_0x}\P(D_{i+1}\,|\, D_i)\big(g(x+\frac{\psi(x)\varrho_N}{r_0})-g(x)\big)+\frac{\P(D^c_{i+1}\,|\, D_i)}{\varrho_N^2\mu_N}(g(-\infty)-g(x)).\\
\end{eqnarray*}
Thus we get 
\[
 \lim_{N\to\infty}A^\psi_Ng(x)=\frac{\psi(x)^2C(\gamma)}{r_0^2x}g'(x)
\]and we conclude as above. {In particular, solving 
\[\dot{h}(t)=\frac{C(\gamma)}{r_0^2}\frac{1}{h(t)^{2q+1}}\]
yields \eqref{fitness-epistasis}.}
\end{proof}

\begin{appendix}
\section{Basics on Yule processes and proof of Theorem \ref{thm:kingman}}\label{app:yule}

\begin{definition}[Yule process] \label{defYule}
A Yule process with rate $r$ is a continuous-time Markov process taking values in $\mathbb{N}$ such that the transition rates are given by: 
$$\left\{\begin{aligned}
n\to n+1\,\,\,\,\,& \text{at rate}& rn\\
n\to \,\text{others}\,\,\,\,\, &\text{at rate}&\, 0.
\end{aligned}\right.$$
\end{definition}
\begin{remark}\label{rem:repro}
Consider a population model starting with $n_0$ individuals, where each individual reproduces independently at rate $r$ by splitting into two individuals. Then counting the total number of individuals, one gets a Yule process. This is the population model which we consider in the Lenski experiment during one day, with starting population size $n_0=N$. 
\end{remark}
The next lemma is well-known. For part a) see e.g. \cite{AthreyaNey}, p. 109; part b) is due to the indepencence of the branching. 

\begin{lemma}\label{xgeo} Let $Z^r$ be a Yule process with rate $r$.\\
a) If $Z^r(0)=1$, then, for $t> 0$, $Z^r(t)$ follows a geometric distribution with parameter $e^{-rt}.$ 
\\
b)
 If $Z^r(0)=n_0\in\N,$ then $Z^r(t)$ follows a negative binomial distribution with parameters $n_0$ and $e^{-rt}.$ In particular,
 \[\E[Z^r(t)]=n_0e^{rt}, \quad \mbox{ and }\quad \var(Z^r(t))=e^{rt}(e^{rt}-1)n_0.\]
 \end{lemma}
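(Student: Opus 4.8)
The plan is to establish part a) by solving the Kolmogorov forward equations of the Yule process, and then to deduce part b) from the branching (independence) property together with a short first- and second-moment computation.

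For part a), I would set $p_n(t):=\P(Z^r(t)=n)$ for $n\ge 1$, with $p_1(0)=1$ and $p_n(0)=0$ for $n\ge 2$. Since the only transitions are $n\to n+1$ at rate $rn$, the forward equations read $p_1'(t)=-rp_1(t)$ and $p_n'(t)=r(n-1)p_{n-1}(t)-rnp_n(t)$ for $n\ge 2$. The first equation gives $p_1(t)=e^{-rt}$, and I would then prove by induction on $n$ that $p_n(t)=e^{-rt}(1-e^{-rt})^{n-1}$: assuming the formula for $n-1$, the equation for $p_n$ is a linear first-order ODE with zero initial value, solved via the integrating factor $e^{rnt}$ and a direct integration. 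This is exactly the geometric law with success parameter $e^{-rt}$ on $\{1,2,\dots\}$. Equivalently — and this is the route behind the cited reference \cite{AthreyaNey} — one can introduce the probability generating function $F(s,t)=\E[s^{Z^r(t)}]$, observe that it solves the transport equation $\partial_tF=rs(s-1)\partial_sF$ with $F(s,0)=s$, and solve by the method of characteristics (a logistic ODE $\dot s=rs(1-s)$ along characteristics) to obtain $F(s,t)=\dfrac{se^{-rt}}{1-s(1-e^{-rt})}$, which is precisely the generating function of the geometric distribution.

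For part b), I would invoke the population description in Remark \ref{rem:repro}: each of the $n_0$ initial individuals founds an independent subpopulation, each evolving as a Yule process started from a single individual, so $Z^r(t)\stackrel{d}{=}\sum_{i=1}^{n_0}G_i$ with the $G_i$ i.i.d.\ with the geometric$(e^{-rt})$ law of part a). A sum of $n_0$ i.i.d.\ geometric$(p)$ variables (on $\{1,2,\dots\}$) is by definition negative binomial with parameters $n_0$ and $p$, as one checks either combinatorially or by multiplying generating functions, giving $\big(ps/(1-(1-p)s)\big)^{n_0}$. Finally, from $\E[G_1]=1/p=e^{rt}$ and $\Var(G_1)=(1-p)/p^2=e^{rt}(e^{rt}-1)$ together with independence, one gets $\E[Z^r(t)]=n_0e^{rt}$ and $\Var(Z^r(t))=n_0e^{rt}(e^{rt}-1)$.

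There is no serious obstacle here; the statement is classical. The only points that need a little care are fixing the convention that the geometric and negative binomial distributions are supported on $\{1,2,\dots\}$ and $\{n_0,n_0+1,\dots\}$ respectively, so that ``parameter $e^{-rt}$'' denotes the success probability, and carrying out the induction step in part a) cleanly. Everything else is a routine ODE or generating-function calculation.
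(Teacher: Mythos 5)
Your proof is correct and matches the paper's (very terse) justification: the paper simply cites Athreya--Ney for part a) --- where the result is obtained by exactly the kind of forward-equation/generating-function calculation you carry out --- and notes that part b) follows from the independence of the branching, which is precisely your decomposition of $Z^r(t)$ into $n_0$ i.i.d.\ geometric subpopulation sizes. Your attention to the support conventions ($\{1,2,\dots\}$ and $\{n_0,n_0+1,\dots\}$) is consistent with how the lemma is used elsewhere in the paper, e.g.\ in the computation $\E[Z^r(t)]=n_0e^{rt}$ and in the heuristics following Theorem \ref{thm:kingman}.
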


 The next lemma shows that $\varsigma_N$ is asymptotically equal to $\sigma$.
\begin{lemma}\label{lem:conv_time} Let $\varsigma_N$ and $\sigma=\sigma_0$ be as defined in \eqref{tau} and \eqref{sigmak}. Then
$$\varsigma_N\stackrel{(d)}{\to}\sigma.$$
\end{lemma}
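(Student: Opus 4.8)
The plan is to upgrade the assertion to convergence in probability, which is legitimate because the limit $\sigma=\tfrac{\log\gamma}{r}$ is deterministic. The starting point is that the Yule path $(Z_t^{(N)})_{t\geq 0}$ is non-decreasing and right-continuous, so $\{t>0:Z_t^{(N)}\geq\gamma N\}=[\varsigma_N,\infty)$ and consequently one has the pathwise identity $\{\varsigma_N\leq s\}=\{Z_s^{(N)}\geq\gamma N\}$ for every $s>0$ (the fact that $\gamma N$ need not be an integer is harmless, since $Z^{(N)}$ only makes unit jumps). Hence it suffices to prove that for each fixed $s\neq\sigma$,
\[
\P\big(Z_s^{(N)}\geq\gamma N\big)\longrightarrow \ind_{\{s>\sigma\}}\qquad\text{as }N\to\infty,
\]
because this says precisely that the distribution function of $\varsigma_N$ converges to $\ind_{[\sigma,\infty)}$ at every continuity point of the latter, which is the definition of $\varsigma_N\stackrel{(d)}{\to}\sigma$.

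To establish the displayed limit I would simply rescale and use a second-moment estimate. By Lemma \ref{xgeo}\,b) with $n_0=N$, the variable $Z_s^{(N)}$ is negative binomial with parameters $N$ and $e^{-rs}$, so $\E[Z_s^{(N)}/N]=e^{rs}$ and $\var(Z_s^{(N)}/N)=e^{rs}(e^{rs}-1)/N\to 0$. Chebyshev's inequality then gives $Z_s^{(N)}/N\to e^{rs}$ in probability. Since $\sigma$ is characterised by $e^{r\sigma}=\gamma$, we have $e^{rs}>\gamma$ for $s>\sigma$ and $e^{rs}<\gamma$ for $s<\sigma$; in either case $\{Z_s^{(N)}/N\geq\gamma\}$ differs from $\{Z_s^{(N)}/N\geq e^{rs}\}$, respectively from its complement, only on an event of vanishing probability, and the displayed convergence follows.

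Since the heart of the argument is a one-line Chebyshev bound, there is essentially no obstacle here; the only points that need a little care are the pathwise identity $\{\varsigma_N\leq s\}=\{Z_s^{(N)}\geq\gamma N\}$ (which rests on monotonicity and right-continuity of the Yule path), and the standard remark that convergence in distribution to a constant only has to be checked at $s\neq\sigma$, so the boundary case $s=\sigma$ may be ignored. If an almost-sure statement were preferred, the same conclusion would follow from the classical martingale limit $e^{-rt}Z_t^{(N)}\to W_1+\dots+W_N$ with the $W_j$ i.i.d.\ exponential of mean $1$, together with the law of large numbers in $N$; but the variance computation above is shorter and entirely self-contained.
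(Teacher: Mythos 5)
Your proof is correct and follows essentially the same route as the paper: both arguments reduce the claim to showing that $Z_s^{(N)}/N$ concentrates around $e^{rs}$ for fixed $s$ on either side of $\sigma$ (the paper invokes the law of large numbers for the sum of $N$ i.i.d.\ geometrics, you make it quantitative via Chebyshev) and then use monotonicity of the Yule path to transfer this to $\varsigma_N$. The only cosmetic difference is that you phrase the conclusion through convergence of the distribution functions at continuity points, while the paper writes it as $\P(\sigma-\varepsilon\leq\varsigma_N\leq\sigma+\varepsilon)\to 1$; these are equivalent for a deterministic limit.
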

\begin{proof}
During one day in the Lenski experiment, consider the population consisting of $N$ subpopulations each of whose sizes follows an independent Yule process with parameter $r$.  
Let {$Z^r_{N}(t)$} denote the size of total population at time $t$. Then {$Z^r_{N}(t)$} is  the sum of  $N$ i.i.d geometric variables with parameter $e^{-rt}$.  Let $\varepsilon>0$. Then due to the law of large numbers 
$$\mathbb{P}\big(\frac{Z^r_{N}(\sigma-\varepsilon)}{\gamma N}<1\big)\stackrel{N\to\infty}{\to} 1;\,\,\, \mathbb{P}\big(\frac{Z^r_{N}(\sigma+\varepsilon)}{\gamma N}>1\big)\stackrel{N\to\infty}{\to} 1.$$
Therefore $\mathbb{P}(\sigma-\varepsilon\leq \varsigma_N\leq \sigma+\varepsilon)\stackrel{N\to\infty}{\to}1.$ Since $\varepsilon$ can be arbitrarily small, the lemma follows.  
\end{proof}
\textbf{Proof of Theorem \ref{thm:kingman}.}
 This is a direct application of Theorem 2.1 in \cite{Moehle-Sagitov}. Fix a generation in the Cannings model and let $c_N$ be the probability
 for a pair of individuals to be coalesced in the previous generation and $d_N$ the probability for a triple of individuals to be coalesced in the previous generation.
 Then it suffices to prove that 
\begin{equation}\label{cndn}
c_N\stackrel{N\to\infty}{\to}0,\,\,d_N/c_N\stackrel{N\to\infty}{\to}0.
\end{equation} 
Notice that $c_N, d_N$ do not depend on the generation since the reproduction, sampling and labeling in each day do not depend on the past and on the future. Therefore we can consider a typical day (the population at the beginning of a day constitutes a generation) and take the notations at the beginning of Section 2.1.1.  Let $Y_t^i$ be the size of the family of individual $i$ at time $t$. Then 
$$Z_t^N=Y_t^1+Y_t^2+\cdots Y_t^N,$$
with $(Y_t^i)_{1\leq i\leq N}$ identically and independently distributed as a geometric distribution with parameter $e^{-rt}$. The day ends at time
$\sigma=\frac{\log \gamma}{r}$ and notice that the population for the next day will be chosen uniformly, hence one can express $c_N, d_N$ as follows:
$$c_N=\mathbb{E}\big[\frac{\sum_{i=1}^N{Y_{\sigma}^i\choose 2}}{{Z_{\sigma}^N\choose 2}}\big]\sim \frac{2(1-\frac{1}{\gamma})}{N},\,\,d_N=\mathbb{E}\big[\frac{\sum_{i=1}^N{Y_{\sigma}^i\choose 3}}{{Z_{\sigma}^N\choose 3}}\big]=O(N^{-2}),$$
 which gives \eqref{cndn}, and thus completes the proof. \hfill $\Box$

\section{Properties of near-critical Galton-Watson processes}\label{app:GW}

The following lemma (Theorem 3 of \cite{Athreya}, and see also Theorem 5.5 in \cite{HJV} under weaker conditions) provides the survival probability for certain near-critical Galton-Watson trees. 

\begin{lemma}\label{lem:GW}
Consider a sequence of  supercritical Galton-Watson processes $(G^N_i)_{i\in\N_0}$, $N=1,2,\ldots$, with offspring mean $1+\beta_N$ (with $\beta_N\to 0$) and offspring variance $\sigma^2+v_N$ (with $v_N\to 0$) and uniformly bounded third moment, starting from one ancestor
in generation 0. Then the survival probability $\phi_N$ obeys $\phi_N \sim \frac {2\beta_N}{\sigma^2}.$ 
\end{lemma}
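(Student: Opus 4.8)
The plan is to pass to probability generating functions and locate the extinction probability $q_N$ --- the smallest root in $[0,1]$ of $f_N(s)=s$, where $f_N(s):=\mathbb E[s^{G_N}]$ and $G_N$ denotes a generic offspring number of the $N$-th process --- as a point of the form $1-\phi_N$. Here $\phi_N$ is exactly the survival probability, and $\phi_N>0$ because the processes are supercritical. The uniform third-moment bound, $C:=\sup_N\mathbb E[G_N^3]<\infty$, is what makes the analysis uniform in $N$: it implies that each $f_N$ is three times continuously differentiable up to $s=1$, that $0\le f_N'''\le C$ on $[0,1]$, and hence that $f_N''$ is Lipschitz on $[0,1]$ with constant $C$, uniformly in $N$. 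I would record the two identities $f_N'(1)=1+\beta_N$ and
\[
f_N''(1)=\mathbb E[G_N(G_N-1)]=\operatorname{Var}(G_N)+\mathbb E[G_N]^2-\mathbb E[G_N]=\sigma^2+v_N+\beta_N+\beta_N^2=\sigma^2+o(1),
\]
and then proceed in two steps (recall $\sigma^2>0$).

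First I would show that $\phi_N\to0$, in fact $\phi_N=O(\beta_N)$. Choose $\delta_0>0$ with $C\delta_0\le\sigma^2/2$; then for $N$ large one has $f_N''(1)\ge 3\sigma^2/4$, so by the Lipschitz bound $f_N''\ge\sigma^2/4$ on $[1-\delta_0,1]$. Writing $h_N(s):=f_N(s)-s$, a convex function with $h_N(1)=0$ and $h_N'(1)=\beta_N>0$, a second-order Taylor expansion at $1$ gives, for $s\in[1-\delta_0,1]$ and suitable $\theta=\theta(s)\in[s,1]$,
\[
h_N(s)=-\beta_N(1-s)+\tfrac12 f_N''(\theta)(1-s)^2\ \ge\ (1-s)\Big(\tfrac{\sigma^2}{8}(1-s)-\beta_N\Big),
\]
which is $\ge0$ as soon as $1-s\ge 8\beta_N/\sigma^2$. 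Since $h_N<0$ just to the left of $1$, the intermediate value theorem produces a root of $h_N$ in $[1-8\beta_N/\sigma^2,1)$; convexity forces this root to be unique in $[0,1)$, so it equals $q_N$. Hence $\phi_N\le 8\beta_N/\sigma^2$ for all large $N$, and in particular $\phi_N\to0$.

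Second, with $\phi_N\to0$ in hand I would conclude by a sharper expansion at the fixed point. Taylor's formula with Lagrange remainder for $f_N$ at $s=1$, evaluated at $s=q_N=1-\phi_N$, gives, for some $\xi_N\in(1-\phi_N,1)$,
\[
1-\phi_N=f_N(1-\phi_N)=1-(1+\beta_N)\phi_N+\tfrac12 f_N''(\xi_N)\phi_N^2 .
\]
Cancelling and dividing by $\phi_N>0$ yields $\beta_N=\tfrac12 f_N''(\xi_N)\phi_N$. Since $\xi_N\to1$ and $f_N''$ is uniformly Lipschitz with $f_N''(1)=\sigma^2+o(1)$, we get $f_N''(\xi_N)=\sigma^2+o(1)$, whence $\beta_N=\tfrac{\sigma^2}{2}\phi_N\,(1+o(1))$, i.e. $\phi_N\sim 2\beta_N/\sigma^2$, as claimed.

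The only genuinely delicate point is the first step: upgrading ``$\beta_N\to0$'' into the quantitative, $N$-uniform bound $\phi_N=O(\beta_N)$. Convexity of $f_N$ alone is too weak here --- it produces only tautologies --- and what does the job is the uniform lower bound $f_N''\ge\sigma^2/4$ on a fixed left-neighbourhood of $1$; this is precisely where the uniformly bounded third moment enters, via the uniform Lipschitz control on $f_N''$. Everything else is a routine two-term Taylor expansion. (The same conclusion, under somewhat weaker moment assumptions, is also contained in the references \cite{Athreya,HJV} cited after the statement.)
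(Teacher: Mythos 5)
Your proof is correct. Note, however, that the paper does not actually prove Lemma \ref{lem:GW}: it is quoted from the literature (Theorem 3 of \cite{Athreya}; see also Theorem 5.5 of \cite{HJV} under weaker moment assumptions), so your argument is not a variant of the paper's proof but a self-contained replacement for the citation. What you supply is the standard generating-function analysis at the extinction fixed point, and the two ingredients that make it work uniformly in $N$ are exactly the ones you isolate: the uniform third-moment bound gives a uniform Lipschitz constant for $f_N''$ on $[0,1]$, and the two-step bootstrap (first the crude bound $\phi_N\le 8\beta_N/\sigma^2$ via the sign of $h_N$ at $1-8\beta_N/\sigma^2$, then the exact asymptotics from $\beta_N=\tfrac12 f_N''(\xi_N)\phi_N$) converts this into $\phi_N\sim 2\beta_N/\sigma^2$. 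The only place where a referee might ask you to say one more word is the claim that the root you produce in $[1-8\beta_N/\sigma^2,1)$ must be $q_N$: the clean justification is that supercriticality forces $\P(G_N\ge 2)>0$, hence $f_N''>0$ on $(0,1]$, so $h_N$ is strictly convex there and has at most two zeros in $[0,1]$ (equivalently, one invokes the standard fact that $f_N(s)<s$ on $(q_N,1)$ for a non-degenerate supercritical process). With that remark included, the argument is complete and, if anything, proved under hypotheses matching the lemma exactly, whereas the cited references establish the statement in greater generality.
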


\begin{lemma}\label{Prop:criticalGW}
Let $(G^N_i)_{i\in\N_0}, N=1,2,...$ be as in Lemma \ref{lem:GW}. Assume that $\beta_NN \to \infty$ as $N\to \infty$.  Then, for every $\varepsilon>0$, $\P(\exists i: G^N_i\geq \varepsilon N)\sim \P(\lim_{i\rightarrow\infty}G^N_i=\infty)$.
\end{lemma}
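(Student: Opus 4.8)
The plan is to prove the two-sided asymptotic. Write $\phi_N := \P(\lim_{i\to\infty} G^N_i = \infty)$. Since a supercritical Galton--Watson process almost surely either becomes extinct or diverges to $+\infty$ (the classical extinction dichotomy, valid here as $\N\setminus\{0\}$ is transient for the chain), $\phi_N$ equals the non-extinction probability of $(G^N_i)$, and by Lemma~\ref{lem:GW}, $\phi_N \sim 2\beta_N/\sigma^2$. One inequality is immediate: $\{\lim_i G^N_i = \infty\} \subseteq \{\exists i : G^N_i \geq \varepsilon N\}$ holds pointwise, hence $\P(\exists i : G^N_i \geq \varepsilon N) \geq \phi_N$. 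The whole content of the lemma is thus the reverse bound $\P(\exists i : G^N_i \geq \varepsilon N) \leq \phi_N(1+o(1))$.

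For that I would introduce the stopping time $T := \inf\{i : G^N_i \geq \lceil \varepsilon N\rceil\}$, so that $\{\exists i : G^N_i \geq \varepsilon N\} = \{T < \infty\}$, and split this event according to whether the process eventually dies out:
\[
\P(\exists i : G^N_i \geq \varepsilon N) = \P(T<\infty) = \phi_N + \P\big(T<\infty,\ (G^N_i)\ \text{becomes extinct}\big),
\]
where the first summand is $\P(T<\infty,\ \lim_i G^N_i = \infty)$, which equals $\phi_N$ by the inclusion above. By the branching property and the strong Markov property applied at $T$, conditionally on $\mathcal F_T$ on the event $\{T<\infty\}$ the shifted process $(G^N_{T+j})_{j\geq 0}$ is a Galton--Watson process started from $G^N_T$ independent ancestral lines, so it becomes extinct with (conditional) probability $(1-\phi_N)^{G^N_T} \leq (1-\phi_N)^{\lceil \varepsilon N\rceil}$, since $G^N_T \geq \lceil \varepsilon N\rceil$ by definition of $T$. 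Taking expectations gives $\P(T<\infty,\ \text{extinction}) \leq (1-\phi_N)^{\varepsilon N}$.

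It then remains to check that $(1-\phi_N)^{\varepsilon N} = o(\phi_N)$ as $N\to\infty$, which is the only place where the hypothesis $\beta_N N \to \infty$ is used. From $1-\phi_N \leq e^{-\phi_N}$ and $\phi_N \sim 2\beta_N/\sigma^2$ one gets $(1-\phi_N)^{\varepsilon N} \leq \exp(-\varepsilon N\phi_N) = \exp\big(-(2\varepsilon/\sigma^2 + o(1))\,\beta_N N\big)$, which decays much faster than $\beta_N \asymp \phi_N$; in the regime $\beta_N \sim c\,N^{-b}$ with $b\in(0,1/2)$ relevant to the applications in this paper this is simply $e^{-c' N^{1-b}} = o(N^{-b})$. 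Combining the two displays yields $\P(\exists i : G^N_i \geq \varepsilon N) \leq \phi_N + o(\phi_N)$, which together with the trivial lower bound proves the asymptotic equivalence. The main obstacle is exactly this last estimate --- quantifying that reaching the macroscopic level $\varepsilon N$ and afterwards going extinct, although not impossible, is overwhelmingly unlikely compared with outright survival (this is what forces $\beta_N N\to\infty$); the remaining steps are routine bookkeeping with the strong Markov property and the classical supercritical dichotomy.
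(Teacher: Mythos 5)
Your strategy is the same as the paper's: apply the strong Markov property at the hitting time of level $\varepsilon N$, bound the conditional extinction probability from that level by $(1-\phi_N)^{\lceil\varepsilon N\rceil}$ via the branching property, and invoke Lemma \ref{lem:GW} for $\phi_N\sim 2\beta_N/\sigma^2$. However, your final estimate has a genuine gap for the lemma as stated. After discarding the factor $\P(T<\infty)\le 1$ you are left needing $(1-\phi_N)^{\varepsilon N}=o(\phi_N)$, i.e.\ $e^{-c'N\beta_N}=o(\beta_N)$, and this does \emph{not} follow from the hypothesis $\beta_N N\to\infty$ alone: it requires $N\beta_N\gg\log(1/\beta_N)$. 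For instance, if $N\beta_N=\log\log N$ then $e^{-c'N\beta_N}=(\log N)^{-c'}$, which is not $o(\beta_N)=o(\log\log N/N)$. Your parenthetical retreat to the regime $\beta_N\sim cN^{-b}$ covers the paper's applications but not the statement being proved.

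The repair is a one-line rearrangement, and it is in effect what the paper does. Keep the factor you threw away: from
\begin{equation*}
\P(T<\infty,\ \text{extinction})\;=\;\P(T<\infty)\,\P(\text{extinction}\mid T<\infty)\;\le\;\P(T<\infty)\,(1-\phi_N)^{\varepsilon N}
\end{equation*}
and $\P(T<\infty)=\phi_N+\P(T<\infty,\ \text{extinction})$ one gets
\begin{equation*}
\P(T<\infty)\;\le\;\frac{\phi_N}{1-(1-\phi_N)^{\varepsilon N}},
\end{equation*}
so that only $(1-\phi_N)^{\varepsilon N}\le e^{-\varepsilon N\phi_N}\to 0$ is needed, and this does follow from $N\phi_N\sim 2N\beta_N/\sigma^2\to\infty$. (Equivalently, the paper phrases this as $\P(\lim_i G^N_i=\infty\mid \exists i: G^N_i\ge\varepsilon N)\ge 1-(1-\phi_N)^{\varepsilon N}\to 1$ and multiplies through.) With this adjustment your argument is complete and coincides with the paper's.
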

\begin{proof} Again let $\phi_N$ be the survival probability of $G^N$ started in one individual. Then
$$\P(\lim_{i\to\infty}G_i=\infty|\exists i:G_i\geq \varepsilon N)\geq 1-(1-\phi_N)^{\varepsilon N}\sim 1-(1-\frac {2\beta_N}{\sigma^2})^{\varepsilon N}\to 1,\, N\to\infty.$$
\end{proof}

\begin{lemma}\label{lem:timefixGW}
Let $(G^N_i)_{i\in\N_0}, N=1,2,...$ be as in Lemma \ref{lem:GW}. Assume  that $\beta_N \sim c N^{-b}, N=1,2,\ldots$, for some $c > 0$ and $b \in (0,1)$. For fixed $\varepsilon \in (0,1)$,  let $\omega_N
:= \inf\{i\ge 0: G^N_i \ge \varepsilon N\}$. Then we have for any $\delta>0$
\[\lim_{N\to\infty}\P_1(\omega_N
> \beta_N^{-1-\delta}\,|\, \omega_N
 < \infty)=0.\]
Further, let $\upsilon_N:=\inf\{i\geq 0: G_i^N=0\}.$ Then for any $\delta>0,$ for $N$ large enough,
\be \label{timeextGW}\P_1(\upsilon_N
> \beta_N^{-1-\delta}\,|\, \upsilon_N
 < \infty)\leq e^{-N^{b\delta}}.\ee
\end{lemma}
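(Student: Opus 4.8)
The plan is to treat the two assertions separately, starting with the bound \eqref{timeextGW} on the extinction time, which is the easier one. Conditioned on $\{\upsilon_N<\infty\}$, the process $(G^N_i)$ is again a Galton--Watson process, with offspring generating function $\tilde f(s)=f(q_Ns)/q_N$ -- here $f$ is the offspring generating function of $G^N$ and $q_N=1-\phi_N$ its extinction probability -- hence with offspring mean $\tilde m_N=f'(q_N)$. Using $f'(1)=1+\beta_N$, $f''(1)\to\sigma^2$, the uniform third--moment bound (which bounds $f'''$ on $[0,1]$) and $q_N=1-\tfrac{2\beta_N}{\sigma^2}(1+o(1))$ from Lemma~\ref{lem:GW}, a Taylor expansion gives $\tilde m_N=1-\beta_N(1+o(1))<1$ for $N$ large, so $-\log\tilde m_N=\beta_N(1+o(1))$. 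Markov's inequality applied to the conditioned (now subcritical) process yields $\P_1(\upsilon_N>i\mid\upsilon_N<\infty)=\P(\tilde G_i\ge1)\le\E[\tilde G_i]=\tilde m_N^{\,i}$, and inserting $i=\lceil\beta_N^{-1-\delta}\rceil$ together with $\beta_N\sim cN^{-b}$ produces a bound of the form $\exp\!\big(-(1+o(1))\beta_N^{-\delta}\big)$, which is $\le e^{-N^{b\delta}}$ for $N$ large, i.e. \eqref{timeextGW}.

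For the hitting time of level $\varepsilon N$, put $T_N:=\lceil\beta_N^{-1-\delta}\rceil$. Since $\beta_NN\sim cN^{1-b}\to\infty$, Lemma~\ref{Prop:criticalGW} applies and gives $\P_1(\omega_N<\infty)=\phi_N(1+o(1))$; as moreover $\{G^N\text{ survives}\}\subseteq\{\omega_N<\infty\}$, also $\P_1(\omega_N<\infty,\,G^N\text{ extinct})=o(\phi_N)$. Writing the conditional probability as a ratio, it then suffices to show $\P_1(\omega_N>T_N,\ G^N\text{ survives})=o(\phi_N)=o(\beta_N)$, i.e. that conditionally on survival the process reaches level $\varepsilon N$ within $T_N$ generations with probability $1-o(1)$ (on $\{\omega_N>T_N\}$ one has $G^N_i<\varepsilon N$ for all $i\le T_N$). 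I would establish this through an intermediate level $\ell_N$ with $\ell_N\beta_N\to\infty$ and $\ell_N\le\tfrac12\varepsilon N$ (e.g. $\ell_N=\lceil\beta_N^{-3/2}\rceil\wedge\lceil\tfrac12\varepsilon N\rceil$), in two phases. In the easy phase, from $\ell_N$ to $\varepsilon N$: if $G^N$ reaches some $m\ge\ell_N$ at a time $\tau$, then by the strong Markov property $W_j:=G^N_{\tau+j}/(1+\beta_N)^j$ is an $L^2$-bounded martingale with $\E[W_j]=m$ and $\sup_j\var(W_j)=O(m/\beta_N)$, so Doob's $L^2$-inequality gives $\P(\inf_jW_j<m/2)=O(1/(\beta_N\ell_N))=o(1)$; on the complement $G^N_{\tau+j}\ge\tfrac12\ell_N(1+\beta_N)^j\ge\varepsilon N$ after $j=O(\beta_N^{-1}\log N)$ further generations, and $O(\beta_N^{-1}\log N)=o(T_N)$ because $\log N=o(\beta_N^{-\delta})$. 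The remaining phase is to show that, conditionally on survival, $G^N$ reaches $\ell_N$ within $O(\beta_N^{-1}\log N)$ generations with probability $1-o(1)$; for this I would pass to the near-critical scaling, using that conditionally on survival $(\beta_NG^N_{\lfloor t\beta_N^{-1}\rfloor})_{t\ge0}$ converges to a positive limit process that increases to $+\infty$ and reaches any level $K$ within $O(\log K)$ time units, whence $G^N$ climbs to a level $\asymp\beta_N^{-1}$ in $O(\beta_N^{-1})$ generations and then, by the same geometric growth, to $\ell_N$ within a further $O(\beta_N^{-1}\log N)$ generations. Combining the two phases with the reduction above gives the first claim.

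The main obstacle is this last step: controlling how long a \emph{near-critical} supercritical Galton--Watson process, conditioned on survival, takes to grow from $O(1)$ through the diffusive window up to a level that is superlinear in $\beta_N^{-1}$ -- a range in which elementary second-moment estimates fail because the relative fluctuations are of order one. This is precisely the regime governed by the near-critical scaling limit (cf. the near-critical Galton--Watson analyses in \cite{Parsons} and \cite{HJV}), and it is here that the polynomial gap between the time budget $\beta_N^{-1-\delta}$ and the actual growth time $\asymp\beta_N^{-1}\log N$ is exploited. The delicate point, if one wishes to avoid invoking the scaling limit directly, is that the target level $\ell_N$ (equivalently $\ell_N\beta_N\to\infty$ in rescaled units) grows with $N$, so the convergence has to be made uniform -- or, alternatively, the growth bootstrapped block by block across the diffusive window, which can equivalently be phrased via the backbone (reduced-tree) process $\hat G^N\le G^N$, which conditionally on survival starts from $1$, never dies, and is again Galton--Watson with offspring mean $1+\beta_N$ and bounded variance.
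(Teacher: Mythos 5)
Your proof of \eqref{timeextGW} is correct and is essentially the paper's argument: pass to the process conditioned on extinction via the duality $\tilde f(s)=f(q_Ns)/q_N$, compute the conditioned offspring mean $f'(q_N)=\E[G_1^N(1-\phi_N)^{G_1^N-1}]=1-\beta_N+o(\beta_N)$ using the third-moment bound, and finish with Markov's inequality. Likewise, your reduction of the first assertion to showing that, conditioned on survival, $G^N$ reaches $\varepsilon N$ within $\beta_N^{-1-\delta}$ generations with probability $1-o(1)$ matches the paper (which bounds $\P_1(G^N \mbox{ reaches } \varepsilon N \mbox{ and dies out})\le(1-\phi_N)^{\lfloor\varepsilon N\rfloor}$ and uses Lemma \ref{Prop:criticalGW}), and your $L^2$-martingale argument for the passage from an intermediate level $\ell_N$ with $\ell_N\beta_N\to\infty$ up to $\varepsilon N$ is sound.

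The genuine gap is exactly where you locate it: the climb from $1$ to $\ell_N$ conditioned on survival is the crux of the first statement, and you only name two possible routes without carrying either out. Moreover, the quantitative input your backbone alternative records is insufficient as stated: knowing only that the reduced tree $\hat G^N$ never dies, has offspring mean $1+\beta_N$ and \emph{bounded} variance yields, via Cauchy--Schwarz applied to $\beta_N=\E[(\hat G^N_1-1)\,;\,\hat G^N_1\ge 2]$, only $\P(\hat G^N_1\ge 2)\gtrsim\beta_N^2$; a pure-birth chain with per-capita splitting probability $\beta_N^2$ per generation needs of order $\beta_N^{-2}\log N$ generations to reach $\varepsilon N$, which exceeds the budget $\beta_N^{-1-\delta}$ for every $\delta<1$. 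What the paper actually does is compute, from the generating function $f^\ast(s)=\big(\E[(1-\phi_N+\phi_Ns)^{G_1^N}]-(1-\phi_N)\big)/\phi_N$ of the reduced tree, that $\P(\hat G^N_1=0)=0$ and $\P(\hat G^N_1=1)=1-\beta_N+o(\beta_N)$, hence $\P(\hat G^N_1\ge 2)\ge\eta\beta_N$ for $N$ large; it then dominates $\hat G^N$ from below by the binary-splitting chain with offspring law $(1-\eta\beta_N)\delta_1+\eta\beta_N\delta_2$, which on the timescale $\beta_N^{-1}$ is a discrete Yule process and therefore reaches size of order $e^{\eta\beta_N^{-\delta}}\gg N$ within $\beta_N^{-1-\delta}$ generations. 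That comparison carries the process all the way to $\varepsilon N$ in a single step, so the intermediate level $\ell_N$ and your second phase become unnecessary. To repair your write-up you would either supply this generating-function computation (or, equivalently, the bound $\var(\hat G^N_1)=O(\beta_N)$) and then run the pure-birth comparison over the full time window, or make the convergence to the near-critical scaling limit uniform over the diverging level $\ell_N\beta_N\to\infty$ --- neither of which is done in the proposal.
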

\begin{proof}
First we consider the difference between conditioning $G^{N}$ on survival (forever) and on reaching $\varepsilon N$, respectively. Since we know (from Lemma B1) that 
\begin{equation}\label{surv}
\mathbb P_1 (G^N \mbox{ survives}) \sim \frac {2\beta_N}{\sigma^2} \sim c' N^{-b},
\end{equation}
we can infer, using the strong Markov property, that
$$\mathbb P_1 (G^N \mbox{ reaches } \varepsilon N \mbox{ and } G^N \mbox{ does not survive })\le \mathbb P_{\lfloor \varepsilon N\rfloor} (G^N \mbox{ does not survive }) $$
\begin{equation}\label{expest}
=\left(1-\phi_N\right)^{\lfloor \varepsilon N\rfloor }\le \left(1-c'N^{-b}\right)^{\lfloor \varepsilon N\rfloor } \le \exp(-c(\varepsilon)  N^{1-b}).
\end{equation}
Thus we can estimate
\begin{equation*}
\mathbb P_1 (\omega_N
 > \beta_N^{-1-\delta} | G^N \mbox{ reaches } \varepsilon N) = \frac 1{\mathbb P_1(G_N \mbox{ reaches } \varepsilon N)}\mathbb   P_1(\omega_N
 > \beta_N^{-1-\delta},  G^N \mbox{ reaches } \varepsilon N)
\end{equation*}
$$\le \frac 1{\mathbb P_1(G^N \mbox{ reaches } \varepsilon N)}\mathbb P_1( G^N\mbox{ reaches } \varepsilon N \mbox{ and does not survive}) $$
$$   + \frac 1{\mathbb P_1(G^N \mbox{ survives })} \mathbb   P_1(\omega_N
 > \beta_N^{-1-\delta},  G^N \mbox{ survives}).$$
The first summand on the r.h.s tends to $0$ as $N\to \infty$ because of \eqref{surv} and \eqref{expest}. Thus, for proving the lemma it suffices to show that 
\begin{equation}\label{suff}\lim_{N\to\infty}\P_1(\omega_N
> \beta_N^{-1-\delta}| G^N \mbox{ survives })=0.
\end{equation}
Let $\phi_N$ be the survival probability of $G^N$, and denote by $H^N_i$, $i=0,1,\ldots$, the generation sizes of those individuals that have an infinite line of descent, conditioned on survival of $G^N$.  Then we have (cf. Proposition 5.28 in \cite{LP})
\begin{equation*}\label{h1n} f^\ast(s):= \sum_{k\ge 0} s^k\P_1(H_1^N=k) = \E_1[s^{H_1^N}]=\frac{\E[(1-\phi_N+\phi_Ns)^{G_1^N}]-(1-\phi_N)}{\phi_N}, s\geq 0.\
\end{equation*}
Obviously, $\P_1(H_1^N=0) =f^\ast(0) = 0$ and 
$\P_1(H_1^N=1) =(f^\ast)'(0) = \E[G_1^N(1-\phi_N)^{G_1^N-1}]$,
which, using Taylor expansion, is transformed to
\begin{align}
\E[G_1^N\Big(1-(G_1^N-1)\phi_N+\frac{(G_1^N-1)(G_1^N-2)\phi_N^2}{2}(1-t\phi_N)^{G_1^N-3}\Big)\nonumber\\  
=\E_1[G_1^N(1-(G_1^N-1)\phi_N)]+O(\phi_N^2)
= 1-\beta_N+o(\beta_N),\label{eq:conditionedGW}
\end{align}
where $t= t(G_1^N) \in (0,1)$. The first equality is due to the assumption in Lemma \ref{lem:GW} that the third order moment of $G_1^N$ is uniformly bounded.
We can thus infer that, for any fixed $\eta \in (0,1)$,
\begin{equation*}\P_1(H_1^{N}\geq 2)\geq \eta\beta_N, \text{when $N$ is large enough}.
\end{equation*} 
We can now give a lower bound for  $G^N_i$, conditioned on survival of $G^N$,  in two steps: first by $H^N_i$, and then by a (discrete time) Galton-Watson process  with offspring distribution $(1-\eta\beta_N)\delta_1 + \eta\beta_N \delta_2$. Call this process $B^{N}$.  With $\frac 1{\lfloor \eta \beta_N \rfloor}$ generations as a new time unit, the sequence of processes $B^N$ converges, as $N \to \infty$, to a standard Yule process. This means that, for every fixed $t > 0$, at a time of $\lfloor t   \eta\beta_N \rfloor^{-1}$ generations, $B^N$ has an approximate geometric distribution with parameter $e^{-t}$. Thus we conclude after  $\lfloor  \beta_N \rfloor^{-1-\delta}$ generations, $B^N$ (and a fortiori also $G^N$ when conditioned to survival) is larger than $\varepsilon N$ with probability tending to $1$ as $N\to \infty$. This shows \eqref{suff}, and concludes the proof of the first statement. \\
 For the last statement, observe that by Theorem 5.28 of \cite{LP} the distribution of $(G^N_i)$ conditioned on extinction is equal to the distribution of a Galton Watson process with probability generating function 
\begin{equation*} \overline{f}(s):= (1-\phi_N)^{-1}\sum_{k\ge 0} ((1-\phi_N)s)^k\P_1(G_1^N=k).\
\end{equation*}
Thus we have $$\E_1[G_1^N|G^N\mbox{ dies out}]=\overline{f}^\prime(1)=\E[G_1^N(1-\phi_N)^{G_1^N-1}]=1-\beta_N+o(\beta_N),$$ where the last equality follows from equation \eqref{eq:conditionedGW}. 
Then, by Proposition 5.2 in \cite{LP} we observe that 
\be \label{eq:exp_dieout}\E_1[G_{\lceil \beta_N^{-1-\delta}\rceil}^N|G^N\mbox{ dies out}]=(1-\beta_N+o(\beta_N))^{\beta_N^{-1-\delta}}\leq e^{-N^{b\delta}}\ee
so we conclude
$$\P_1(v_N>\beta_N^{-1-\delta}|v_N<\infty )=\P_1(G_{\lceil \beta_N^{-1-\delta}\rceil}^N>0|G^N\mbox{ dies out})\leq \E_1[G_{\lceil \beta_N^{-1-\delta}\rceil}^N|G^N\mbox{ dies out}]\leq e^{-N^{b\delta}}.$$
\end{proof}
\end{appendix}

\paragraph{\bf Acknowledgements} We thank Ellen Baake, Jochen Blath, Michael Desai, Thomas Hindr\'e, Tom Kurtz and Todd Parsons for stimulating discussions, Thomas Lenormand and Sylvie M\'eleard for pointing us to reference \cite{Chevin}, Valdimir Vatutin for the hint to references \cite{Athreya, HJV},  and T. Hindr\'e and T. Lenormand for illuminating lectures on experimental evolution during the 2013 Spring School in Aussois. This work was supported in part  by the priority program SPP 1590 ``probabilistic structures in evolution'' of the German Science Foundation (DFG). AGC acknowledges support by DFG RTG 1845 ``Stochastic Analysis and Applications in Biology, Finance and Physics'', the Berlin Mathematical School (BMS), and the Mexican Council of Science (CONACyT) in collaboration with the German Academic Exchange Service (DAAD). LY acknowledges support by La Fondation Sciences Math\'ematiques de Paris and the Swedish Research Council through grant no. 2013-4688.
\bibliographystyle{plain}
\bibliography{bibliography}
\end{document}